\documentclass{article}

\usepackage{amsmath,amssymb,amsthm,mathrsfs}
\usepackage[overload]{empheq}
\usepackage{graphicx}
\usepackage[colorlinks=true]{hyperref}
\usepackage{pdfsync}
\usepackage{tikz}
\usetikzlibrary{decorations.pathreplacing}
\usetikzlibrary{patterns}

\topmargin -1cm
\textheight 21cm
\textwidth 15cm
\oddsidemargin 1cm

\setlength{\parindent}{0pt}

\newtheorem{theorem}{Theorem}
\newtheorem{proposition}{Proposition}
\newtheorem{lemma}{Lemma}

\theoremstyle{definition}
\theoremstyle{definition}
\theoremstyle{definition}\newtheorem{remark}{Remark}


\newcommand{\fonction}[5]{\begin{array}[t]{lrcl}#1 :&#2 &\longrightarrow &#3\\&#4& \longmapsto &#5 \end{array}}
 
\newcommand{\vertiii}[1]{{\left\vert\kern-0.15ex\left\vert\kern-0.15ex\left\vert #1 
    \right\vert\kern-0.15ex\right\vert\kern-0.15ex\right\vert}}  

\DeclareMathOperator*{\argmin}{arg\,min}

\def\bs{\backslash}

\def\t{\tau}

\def\R{\mathbb{R}}
\def\N{\mathbb{N}}

\def\C{\mathrm{C}}
\def\AC{\mathrm{AC}}
\def\BF{\mathrm{BF}}
\def\BV{\mathrm{BV}}
\def\NBV{\mathrm{NBV}}

\def\V{\mathrm{V}}

\def\L{\mathrm{L}}
\def\E{\mathrm{E}}
\def\S{\mathrm{S}}
\def\B{\mathrm{B}}
\def\S{\mathrm{S}}
\def\K{\mathrm{K}}
\def\Q{\mathrm{Q}}
\def\RR{\mathrm{R}}

\def\UU{\mathscr{U}}




\title{Note on Pontryagin maximum principle with running state constraints and smooth dynamics - Proof based on the Ekeland variational principle}

\author{Lo\"ic Bourdin. University of Limoges, France.}

\begin{document}

\maketitle

\begin{abstract}
In this note our aim is to give a proof of the Pontryagin maximum principle for a general optimal control problem with running state constraints and smooth dynamics. Our proof is based on the classical Ekeland variational principle.

The main result (and its proof) of this note are not new and are already well-known in the literature. The aim of the author is only to provide a complete and detailed proof of this classical theorem in the case of smooth dynamics. If you have any remarks or questions, do not hesitate to contact the author at \textit{loic.bourdin@unilim.fr}.
\end{abstract}

\tableofcontents

\section{Main result}
We first introduce some notations available throughout the paper. Let $T>0$ be fixed. For every $n \in \N^*$ and every $1 \leq r \leq +\infty$, we denote by 
\begin{itemize}
\item $\BF_n := \BF ([0,T],\R^n)$ the classical space of bounded functions endowed with the classical uniform norm $\Vert \cdot \Vert_\infty$;
\item $\C_n := \C ([0,T],\R^n)$ the classical space of continuous functions endowed with $\Vert \cdot \Vert_\infty$;
\item $\AC_n := \AC ([0,T],\R^n)$ the classical space of absolutely continuous functions;
\item $\BV_n := \BV ([0,T],\R^n)$ the classical space of functions with bounded variations endowed with $\Vert \cdot \Vert_{\BV_n}$ (see Appendix~\ref{annexeBV} for some recalls);
\item $\L^r_n : = \L^r ([0,T],\R^n)$ the classical Lebesgue space of $r$-integrable functions endowed with its usual norm $\Vert \cdot \Vert_{\L^r_n}$.
\end{itemize}
In the whole paper, when no confusion is possible, we remove the subscript $n$ and we just denote by $\BF$, $\C$, $\AC$, $\BV$ or $\L^r$. \\

We denote by $\BF_1^+ := \BF([0,T],\R^+)$ and $\C_1^+ := \C([0,T],\R^+)$ where $\R^+ = [0,+\infty)$. \\

Then, $\eta \in \BV_n$ is said to be \textit{normalized} if $\eta (0) = 0 $ and $\eta$ is left-continuous on $(0,T)$. The subspace of normalized functions with bounded variations will be denoted by $\NBV_n$. \\

Finally, the classical Lebesgue measure on $[0,T]$ will be denoted by $\lambda$.

\subsection{A state constrained optimal control problem}
Let $m$, $n$ and $j \in \N^*$ be fixed. In this paper we consider the optimal control problem~\eqref{theproblem} given by
\begin{equation}\label{theproblem}\tag{OCP}
\begin{array}{ll}
\text{minimize} & \Psi ( q(T) ), \\
& \\
\text{subject to} & q \in \AC_n, \; u \in \L^\infty_m, \\[6pt]
& \dot q(t) = f ( q(t),u(t),t ), \quad \text{a.e. $t \in [0,T]$,}  \\[6pt]
& q(0)=q_0 , \\[6pt]
& u(t) \in \Omega, \quad \text{a.e. $t \in [0,T]$,}  \\[6pt]
& G_i(q(t),t) \leq 0, \quad \forall t \in [0,T], \quad \forall i=1,\ldots,j , \\[1pt]
\end{array}
\end{equation}
where $\Psi : \R^n \to \R$ is of class $\C^1$, where $f : \R^n \times \R^m \times [0,T] \to \R^n$ is continuous and of class $\C^1$ in its two first variables, where $G =(G_i)_{i=1,\ldots,j} : \R^n \times [0,T] \to \R^j$ is continuous and of class $\C^1$ in its first variable, and where $q_0 \in \R^n$ is fixed and $\Omega \subset \R^m$ is a nonempty closed subset. \\

Since $\Psi$, $f$ and $G$ are all regular, Problem~\eqref{theproblem} is said to be an optimal control problem \textit{with smooth dynamics}. The last constraint corresponds to \textit{running state constraints}. \\

We now introduce the Hamiltonian $H : \R^n \times \R^m \times \R^n \times [0,T] \to \R$ associated to Problem~\eqref{theproblem} defined by
$$ H(q,u,p,t) := \langle p , f(q,u,t) \rangle_{\R^n \times \R^n}. $$

\subsection{Pontryagin maximum principle}

Our main result in this note is given by the following theorem.

\begin{theorem}[Pontryagin maximum principle]\label{mainresult}
Let $(q^*,u^*) \in \AC_n \times \L^\infty_m$ be an optimal solution of Problem~\eqref{theproblem}. There exists a nontrivial couple $(\psi , \eta)$ where $\psi \geq 0$ and $\eta = (\eta_i)_{i=1,\ldots,j} \in \NBV_j$ such that
$$ u^*(t) \in \argmin_{v \in \Omega} H(q^*(t),v,p(t),t ) $$
for a.e. $t \in [0,T]$, where $p \in \BV_n$ is the unique global solution of the backward linear Cauchy-Stieltjes problem given by
$$ \left\lbrace \begin{array}{l}
-dp = \partial_1 H ( q^*,u^*,p,\cdot ) \; dt + \sum_{i=1}^j \partial_1 G_i (q^*,\cdot) \; d\eta_i , \quad \text{on $[0,T]$,}  \\[5pt]
p(T)= \psi \nabla \Psi ( q^*(T) ).
\end{array} \right. $$
In addition, it holds that
$$ \eta_i \text{ is monotically increasing on $[0,T]$} \quad \text{and} \quad \int_0^T G_i (q^*(\t),\t ) \; d\eta_i (\t) = 0, $$
for every $i=1,\ldots,n$.
\end{theorem}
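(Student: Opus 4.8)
The plan is to follow the classical Ekeland scheme in three stages: a penalized reformulation producing, for each $\varepsilon>0$, an approximately optimal control via Ekeland's variational principle; a needle (spike) variation analysis at that control yielding a normalized multiplier and an approximate maximum principle; and a passage to the limit $\varepsilon\to0$. For an admissible control $u$ write $q_u$ for the corresponding state, and equip a suitable complete metric subset $\UU$ of admissible controls valued in $\Omega$ (restricted, if needed, to a $d$-ball around $u^*$ on which the flow is defined on all of $[0,T]$) with the Ekeland metric $d(u,v):=\lambda(\{t\in[0,T]:u(t)\neq v(t)\})$, for which $u\mapsto q_u$ is continuous by a Gronwall argument. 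Denoting by $C:=\{\varphi\in\C_1:\varphi\le0\text{ on }[0,T]\}$ the closed convex cone of nonpositive continuous functions, whose distance function is $\mathrm{dist}_C(\varphi)=\max_{[0,T]}\varphi^+$, set
$$ J_\varepsilon(u):=\Big(\big[(\Psi(q_u(T))-\Psi(q^*(T))+\varepsilon)^+\big]^2+\sum_{i=1}^{j}\mathrm{dist}_C\big(G_i(q_u(\cdot),\cdot)\big)^2\Big)^{1/2}. $$
Optimality of $(q^*,u^*)$ forces $J_\varepsilon>0$ on $\UU$ while $J_\varepsilon(u^*)=\varepsilon$, so $u^*$ is an $\varepsilon$-minimizer of the positive lower-semicontinuous $J_\varepsilon$; Ekeland's principle with parameter $\sqrt\varepsilon$ provides $u_\varepsilon\in\UU$ with $d(u_\varepsilon,u^*)\le\sqrt\varepsilon$, $0<J_\varepsilon(u_\varepsilon)\le\varepsilon$, minimizing $u\mapsto J_\varepsilon(u)+\sqrt\varepsilon\,d(u,u_\varepsilon)$ over $\UU$.

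Next I would perform needle variations at $u_\varepsilon$: at a Lebesgue point $\tau$, for $v\in\Omega$ and small $h>0$, the variation $u_\varepsilon^{\tau,h,v}$ (equal to $v$ on $[\tau,\tau+h)$, to $u_\varepsilon$ elsewhere) satisfies $d(u_\varepsilon^{\tau,h,v},u_\varepsilon)\le h$ and $q_{u_\varepsilon^{\tau,h,v}}=q_{u_\varepsilon}+h\,w^{\tau,v}+\mathrm{o}(h)$ in $\C_n$, where $w^{\tau,v}$ solves the linearized equation $\dot w=\partial_1 f(q_{u_\varepsilon},u_\varepsilon,\cdot)\,w$ on $[\tau,T]$ with $w(\tau)=f(q_{u_\varepsilon}(\tau),v,\tau)-f(q_{u_\varepsilon}(\tau),u_\varepsilon(\tau),\tau)$ and $w\equiv0$ on $[0,\tau]$. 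Since $J_\varepsilon(u_\varepsilon)>0$, $J_\varepsilon$ is directionally differentiable at $u_\varepsilon$ along these variations, its directional derivative computed via the subdifferential calculus of the convex distance functions: a number $\psi_\varepsilon\ge0$ attached to the cost, and for each $i$ a nonnegative Radon measure on $[0,T]$ carried by the active set $\{t:G_i(q_{u_\varepsilon}(t),t)=\max_{[0,T]}G_i(q_{u_\varepsilon}(\cdot),\cdot)\}$, with the squares of these quantities summing to $1$. Feeding the needle into the minimality of $u_\varepsilon$, dividing by $h$ and letting $h\to0^+$, gives a variational inequality along $w^{\tau,v}$; a Hahn–Banach sandwich argument then extracts, for this $\varepsilon$, a \emph{single} normalized multiplier $(\psi_\varepsilon,(\eta_{i,\varepsilon})_i)$ with $\psi_\varepsilon^2+\sum_i\|\eta_{i,\varepsilon}\|^2=1$ such that $\psi_\varepsilon\langle\nabla\Psi(q_{u_\varepsilon}(T)),w^{\tau,v}(T)\rangle+\sum_i\int_\tau^T\langle\partial_1 G_i(q_{u_\varepsilon}(t),t),w^{\tau,v}(t)\rangle\,d\eta_{i,\varepsilon}(t)\ge-\sqrt\varepsilon$ for all $v\in\Omega$ and a.e.\ $\tau$. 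Introducing $p_\varepsilon\in\BV_n$ as the solution of the backward Cauchy–Stieltjes problem with data $(\psi_\varepsilon,\eta_\varepsilon)$ and carrying out a Stieltjes integration by parts against $w^{\tau,v}$ on $[\tau,T]$ (the adjoint and linearized equations cancel), this collapses to
$$ H(q_{u_\varepsilon}(\tau),v,p_\varepsilon(\tau),\tau)-H(q_{u_\varepsilon}(\tau),u_\varepsilon(\tau),p_\varepsilon(\tau),\tau)\ge-\sqrt\varepsilon,\quad\forall v\in\Omega,\ \text{a.e.\ }\tau. $$
Finally, since $\eta_{i,\varepsilon}$ is carried by the active set and $\max_{[0,T]}G_i(q_{u_\varepsilon}(\cdot),\cdot)\le J_\varepsilon(u_\varepsilon)\le\varepsilon$ (the measure vanishing when this maximum is negative), one gets $0\le\int_0^T G_i(q_{u_\varepsilon}(\t),\t)\,d\eta_{i,\varepsilon}(\t)\le\varepsilon$.

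The last stage is the limit $\varepsilon\to0$. Along a subsequence, $d(u_\varepsilon,u^*)\to0$ gives $u_\varepsilon\to u^*$ a.e.\ and $q_{u_\varepsilon}\to q^*$ in $\C_n$; $\psi_\varepsilon\to\psi\ge0$ (bounded in $[0,1]$); and, the total masses being $\le1$, Banach–Alaoglu yields $\eta_{i,\varepsilon}\rightharpoonup^*\eta_i\ge0$, whose normalized primitives lie in $\NBV_1$ and are monotonically increasing. Because $[0,T]$ is compact, testing against the constant $1$ shows $\|\eta_{i,\varepsilon}\|\to\|\eta_i\|$, whence $\psi^2+\sum_i\|\eta_i\|^2=1$ and the couple $(\psi,\eta)$ is nontrivial. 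Continuous dependence of solutions of linear Cauchy–Stieltjes problems on their data transfers $p_\varepsilon\to p$, with $p$ the solution in the statement, and $p_\varepsilon(\tau)\to p(\tau)$ at its co-countably many continuity points. Passing to the limit in the approximate Hamiltonian inequality along a countable dense family of controls $v$, together with a standard measure-theoretic argument (Egorov, using the a.e.\ convergence of $u_\varepsilon$ and of $q_{u_\varepsilon},p_\varepsilon$), yields $u^*(\t)\in\argmin_{v\in\Omega}H(q^*(\t),v,p(\t),\t)$ for a.e.\ $\t$; and passing to the limit in $0\le\int_0^T G_i(q_{u_\varepsilon},\t)\,d\eta_{i,\varepsilon}\le\varepsilon$ (a uniformly convergent continuous integrand against a weakly-$*$ convergent measure) gives $\int_0^T G_i(q^*(\t),\t)\,d\eta_i(\t)=0$, which with $G_i(q^*(\cdot),\cdot)\le0$ also pins $\eta_i$ to the active set.

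I expect two genuine obstacles. First, inside the needle-variation step, the precise subdifferential/Danskin calculus for $J_\varepsilon$: the $\sup$-norm distance to the cone $C$ is only directionally differentiable, its subdifferential being probability-type measures on the active set, and one must combine this with the Hahn–Banach sandwich to produce \emph{one} $\varepsilon$-multiplier valid simultaneously for all needles. Second, the two limit passages: establishing continuous dependence of the backward \emph{Stieltjes} adjoint equation on its measure data (so that $p_\varepsilon\to p$ in $\BV_n$ and at continuity points), and transferring the \emph{pointwise a.e.} Hamiltonian inequality through the merely a.e.\ convergence of the controls.
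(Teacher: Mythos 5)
Your overall strategy (penalization, Ekeland, spike variations, normalized $\varepsilon$-multipliers, limit) is the same as the paper's, and several of your ingredients are sound: the nontriviality argument via convergence of total masses of the nonnegative measures $\eta_{i,\varepsilon}$ tested against the constant $1$, the complementary slackness from the support of the multipliers in the active set, and the final localization at Lebesgue points. But the step you flag as your first ``obstacle'' is not a technicality to be expected to work out --- as described it fails, and circumventing it is precisely the content of the paper's main technical machinery. With the sup-norm distance to the cone $C$, the penalty is only directionally differentiable and its subdifferential at $g(q_{u_\varepsilon})\notin\S$ is a whole set of probability-type measures on the argmax set. Ekeland's inequality applied to one needle $u_\varepsilon^{\tau,h,v}$ therefore only tells you that \emph{for that needle there exists} a multiplier in the subdifferential making the variational inequality hold. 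To get \emph{one} multiplier valid for all $(\tau,v)$ simultaneously you would need a minimax/separation argument, but the set of needle directions $\{w^{\tau,v}\}$ is not convex, and a convex combination of needle directions is not, a priori, attainable by any admissible control variation; so the ``Hahn--Banach sandwich'' has nothing to separate. The two standard repairs are exactly what the paper deploys: (i) \emph{implicit spike variations} built on the Lyapunov--Sierpinski convexity theorem (Lemma~\ref{lem123684} and Proposition~\ref{prop984357}), which replace needles by perturbations on sets $\Q_\rho$ of measure $\rho T$ chosen so that the difference quotient converges to the variation vector of an arbitrary admissible $u'$ --- effectively convexifying the attainable directions; and (ii) \emph{renorming} $\C_j$ so that the dual norm is strictly convex (Proposition~\ref{proprenorming}), which makes $d_\S$ strictly Hadamard-differentiable off $\S$ (Proposition~\ref{propdistance}), so the multiplier $\varphi^\ell_k$ is a single canonically defined functional, independent of the direction of variation, and no selection problem arises. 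Without at least one of these devices your plan stalls at its central step.

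Two secondary points. First, for unbounded $\Omega$ the map $u\mapsto q_u$ is not continuous for the Ekeland metric $d(u,v)=\lambda(\{u\neq v\})$ (controls agreeing off a small set can differ unboundedly there), and restricting to a $d$-ball does not control $\Vert u\Vert_{\L^\infty}$; the paper handles this with the truncation radii $R_\ell$ and a second limit $\ell\to\infty$ (the ``second inequality independent of $\ell$''), which your scheme would also need. Second, your route requires continuous dependence of the backward Stieltjes adjoint on weakly-$*$ convergent measure data and a pointwise a.e.\ passage to the limit in the approximate Hamiltonian inequality; this is workable (via the Duhamel representation and Portmanteau at non-atoms of $\eta_i$), but the paper sidesteps it entirely by taking the weak-$*$ limit of the functionals $\varphi^\ell_k$ \emph{before} introducing $p$, defining the adjoint only once from the limit multiplier and deriving an \emph{integrated} Hamiltonian inequality that is then localized. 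That ordering is worth adopting: it removes your second obstacle altogether.
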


We refer to Appendix~\ref{annexeBV} for some recalls about functions of bounded variations and to Appendix~\ref{appderniere} for details on linear Cauchy-Stieltjes problems.

\section{Proof}

This section is entirely devoted to the proof of Theorem~\ref{mainresult}. This proof is based on the classical Ekeland variational principle and is inspired from several references like \cite{bonnans,liyong}.

\subsection{Preliminaries}

Let $u \in \L^\infty$. In this preliminary section we focus on the forward (nonlinear) Cauchy problem~\eqref{eqCPU} given by
\begin{equation}\label{eqCPU}\tag{CP${}_u$}
\left\lbrace \begin{array}{l}
\dot q(t) = f ( q(t),u(t),t ), \quad \text{a.e. $t \in [0,T]$,}  \\[5pt]
q(0)=q_0.
\end{array} \right. 
\end{equation}
A couple $(q,I)$ is said to be a \textit{(local) solution} of \eqref{eqCPU} if
\begin{enumerate}
\item $I \subset [0,T]$ is an interval with nonempty interior such that $\min I = 0$;
\item $q : I \to \R^n$ is absolutely continuous on $I$ and $q$ satisfies
$$ \left\lbrace \begin{array}{l}
\dot q(t) = f ( q(t),u(t),t ), \quad \text{a.e. $t \in I$,}  \\[5pt]
q(0)=q_0,
\end{array} \right. $$
or equivalently, $q : I \to \R^n$ is continuous on $I$ and $q$ satisfies
$$ q(t) = q_0 + \int_0^t f(q(\t),u(\t),\t) \; d\t , $$
for every $t \in I$.
\end{enumerate}
The couple $(q,I)$ is said to be a \textit{global solution} of \eqref{eqCPU} if $I = [0,T]$. \\

Let $(q,I)$ and $(q',I')$ be two local solutions of \eqref{eqCPU}. We say that $(q,I)$ is an \textit{extension} of $(q',I')$ if $I' \subset I$ and $q (t) = q' (t)$ for every $t \in I'$. We say that $(q,I)$ is a \textit{maximal solution} of \eqref{eqCPU} if it extends all other local solutions of \eqref{eqCPU}.

\subsubsection{Some Cauchy-Lipschitz and continuous dependence results}

Recall the two following classical Cauchy-Lipschitz (or Picard-Lindel\"of) results.

\begin{lemma}\label{lemcauchylipsch}
For every $u \in \L^\infty$, there exists a unique maximal solution of \eqref{eqCPU}. 
\end{lemma}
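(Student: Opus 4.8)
The plan is to prove Lemma~\ref{lemcauchylipsch} by the standard Picard–Lindelöf argument adapted to the Carathéodory setting, since $f$ is only measurable in $t$ through $u \in \L^\infty$ but $\C^1$ (hence locally Lipschitz) in its spatial variable. First I would establish \emph{local existence and uniqueness}: fix a radius $R>0$ and, using that $f$ is continuous and $u$ essentially bounded, observe that on the tube $\overline{B}(q_0,R) \times [0,\tau]$ the map $t \mapsto f(q,u(t),t)$ is measurable and bounded by some constant $M$, while $q \mapsto f(q,u(t),t)$ is Lipschitz with constant $L$ uniformly in $t$ (by compactness of $\overline{B}(q_0,R)$, the essential boundedness of $u$, and continuity of $\partial_1 f$). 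Then for $\tau>0$ small enough that $M\tau \le R$ and $L\tau < 1$, the integral operator $\mathcal{F}(q)(t) := q_0 + \int_0^t f(q(\t),u(\t),\t)\,d\t$ is a contraction on the closed ball of radius $R$ in $\C([0,\tau],\R^n)$, so the Banach fixed point theorem yields a unique local solution on $[0,\tau]$.

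Next I would upgrade this to a \emph{maximal solution}. Consider the set of all local solutions $(q,I)$ of~\eqref{eqCPU}; by the local existence step it is nonempty. The key structural fact is that any two local solutions agree on the intersection of their intervals: if $(q,I)$ and $(q',I')$ both solve~\eqref{eqCPU}, then the set $\{t \in I \cap I' : q(t) = q'(t)\}$ is nonempty (it contains $0$), closed (by continuity), and open in $I \cap I'$ (re-applying the local uniqueness result based at any common point, which works because the Lipschitz constant is local and the argument above can be centered at an arbitrary initial time), hence equals $I \cap I'$. Therefore I can define $I^* := \bigcup I$ over all local solutions and $q^*$ on $I^*$ by patching; consistency makes $q^*$ well-defined, absolutely continuous on every compact subinterval, and a solution, and by construction $(q^*,I^*)$ extends every local solution, i.e. it is the maximal solution. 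Uniqueness of the maximal solution is then immediate from the agreement property.

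The main obstacle I anticipate is purely technical rather than conceptual: handling the Carathéodory regularity carefully, namely checking that $t \mapsto f(q(t),u(t),t)$ is Lebesgue measurable for continuous $q$ (this follows from $f$ being a Carathéodory function — continuous in $(q,u)$, measurable in $t$ — composed with measurable maps) and that the integral operator indeed lands in $\C([0,\tau],\R^n)$ with the claimed bounds. One must also be mindful that the Lipschitz constant $L$ depends on $R$ and on $\|u\|_{\L^\infty}$, so the local time-step $\tau$ shrinks near a potential blow-up; this is exactly why the lemma only asserts a \emph{maximal} (not necessarily global) solution, and global existence will require the additional a priori estimates developed later. I would also note in passing the equivalence, already recorded in the excerpt, between the differential and integral formulations, which is what legitimizes working with the fixed point operator $\mathcal{F}$ on the space of continuous functions.
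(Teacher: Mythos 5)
Your proposal is correct. Note that the paper itself offers no proof of Lemma~\ref{lemcauchylipsch}: it is simply recalled as a classical Cauchy--Lipschitz (Picard--Lindel\"of) result, and your argument --- local existence and uniqueness via a contraction on the integral operator in the Carath\'eodory setting, followed by the standard open--closed agreement argument and patching to obtain the maximal solution --- is exactly the canonical proof the paper implicitly invokes.
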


In the sequel we denote by $(q(\cdot,u),I(u))$ the maximal solution of \eqref{eqCPU} associated to $u \in \L^\infty$.

\begin{lemma}
Let $u \in \L^\infty$. If $(q(\cdot,u),I(u))$ is not global (that is, $T \notin I(u)$) then $I(u)$ is not closed and $q(\cdot,u)$ is unbounded on $I(u)$.
\end{lemma}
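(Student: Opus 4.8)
The statement to prove is the classical blow-up alternative: if the maximal solution $(q(\cdot,u),I(u))$ of \eqref{eqCPU} is not global, then $I(u)$ is not closed and $q(\cdot,u)$ is unbounded on $I(u)$. The plan is to argue by contraposition on each of the two conclusions simultaneously, showing that if $I(u)$ were closed \emph{or} $q(\cdot,u)$ were bounded, then the solution could be strictly extended beyond $\sup I(u)$, contradicting maximality.

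First I would write $I := I(u)$, $q := q(\cdot,u)$, and $b := \sup I \leq T$; since the solution is assumed not global we have $b \leq T$, and the interesting case is $b < T$ or $b = T$ with $b \notin I$. I would first treat the case where $q$ is bounded on $I$, say $\Vert q(t) \Vert \leq M$ for all $t \in I$. Using the integral formulation $q(t) = q_0 + \int_0^t f(q(\t),u(\t),\t)\, d\t$ together with the fact that $f$ is continuous on the compact set $\overline{B}(0,M) \times \overline{B}(0,\Vert u \Vert_{\L^\infty_m}) \times [0,T]$ — hence bounded there by some constant $C$ — I would show that $q$ is globally Lipschitz on $I$, so it extends continuously to $\overline{I}$, and in particular the limit $q(b^-)$ exists. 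This forces $b \in I$ as well, because otherwise the pair $(\tilde q, \overline{I})$ obtained by setting $\tilde q(b) := q(b^-)$ would be a local solution (the integral identity passes to the limit $t \to b^-$) strictly extending $(q,I)$, contradicting maximality. So boundedness already yields that $I$ is closed; it then remains to rule out $b < T$ in that situation.

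Next, assuming $b \in I$ and $b < T$, I would invoke the local Cauchy--Lipschitz existence result (Lemma~\ref{lemcauchylipsch}, applied on an interval starting at $b$, or rather its proof) to solve the Cauchy problem $\dot z = f(z,u,\cdot)$ on $[b, b+\delta]$ with initial condition $z(b) = q(b)$ for some small $\delta > 0$; concatenating $q$ on $[0,b]$ with $z$ on $[b,b+\delta]$ produces a local solution of \eqref{eqCPU} on $[0,b+\delta]$ that strictly extends $(q,I)$, again contradicting maximality. Hence if the solution is not global, either $I$ is not closed or $q$ is unbounded; but the first case above showed boundedness implies $I$ closed, and then the concatenation argument forces globality — so non-globality forces $q$ to be unbounded, and then (since a bounded-away-from-closed argument shows a closed $I$ with $b<T$ is impossible and $b = T$ with $b \in I$ would mean global) $I$ is not closed. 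I would organize these implications carefully to land exactly on ``$I(u)$ not closed \emph{and} $q(\cdot,u)$ unbounded''.

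The main obstacle, and the point requiring the most care, is the concatenation/extension step: one must check that gluing two absolutely continuous solutions at the point $b$ yields an absolutely continuous function satisfying the ODE a.e.\ on the union, including handling the measure-zero issue at $\{b\}$ and the fact that $u \in \L^\infty_m$ is only defined up to a.e.\ equivalence (so the local existence result near $b$ must be applied to the same representative of $u$). Everything else — the a priori bound from continuity of $f$ on compacts, the Lipschitz estimate, passing to the limit in the integral equation — is routine. I would state the extension construction as the key lemma-within-the-proof and verify the integral identity $\tilde q(t) = q_0 + \int_0^t f(\tilde q(\t),u(\t),\t)\, d\t$ holds for all $t \in [0,b+\delta]$ by splitting the integral at $b$.
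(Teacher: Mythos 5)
Your argument is correct. Note that the paper offers no proof of this lemma at all: it is stated as one of two ``classical Cauchy--Lipschitz (or Picard--Lindel\"of) results'' that are merely recalled, so there is nothing to compare against. Your plan is the standard blow-up alternative — boundedness of $q$ plus boundedness of $f$ on a compact tube gives a Lipschitz bound, hence a continuous extension to $\overline{I(u)}$ through the integral identity, and a closed $I(u)$ with $\sup I(u)<T$ is then excluded by local (Carath\'eodory-type) existence at the right endpoint followed by concatenation — and you correctly flag the only delicate points, namely the gluing of absolutely continuous solutions and the choice of representative of $u$.
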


In the sequel we denote by $\UU \subset \L^\infty$ the set of controls $u \in \L^\infty$ such that $T \in I(u)$. A control $u \in \UU$ is usually said to be \textit{admissible}. For every $u \in \UU$ and every $R > \Vert u \Vert_{\L^\infty}$, we introduce
$$ \K_{u,R} := \{ (x,v,t) \in \R^{n} \times \R^m \times [0,T] \; \mid \; \Vert x-q(t,u) \Vert_{\R^n} \leq 1 \text{ and } \Vert v \Vert_{\R^m} \leq R \} .$$
From continuity of $q(\cdot,u)$ on $[0,T]$, $\K_{u,R}$ is a compact subset of $\R^{n} \times \R^m \times [0,T]$. As a consequence, $f$, $ \partial_1 f $ and $\partial_2 f $ are bounded on $\K_{u,R}$ by some $L_{u,R} \geq 0$ and it holds that
\begin{equation}\label{eqlipK}
\Vert f(x_2,v_2,t) - f(x_1,v_1,t) \Vert_{\R^n}  \leq L_{u,R} ( \Vert x_2 -x_1 \Vert_{\R^n}  + \Vert v_2 -v_1 \Vert_{\R^m}  ),
\end{equation}
for all $(x_1,v_1,t)$, $(x_2,v_2,t) \in \K_{u,R}$.

\begin{proposition}\label{propEUR}
Let $u \in \UU$. For every $R > \Vert u \Vert_{\L^\infty}$, there exists $\nu_{u,R}>0$ such that
$$ \E_{u,R} := \overline{\B}_{\L^\infty} (0,R) \cap \overline{\B}_{\L^1} (u,\nu_{u,R}) $$
is contained in $\UU$. Moreover, for every $u' \in \E_{u,R}$, it holds that $(q(\t,u'),u'(\t),\t) \in \K_{u,R}$ for a.e. $\t \in [0,T]$.
\end{proposition}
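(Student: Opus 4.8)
The plan is to use a Grönwall-type argument combined with a continuity/openness argument for the interval of existence. First I would fix $u \in \UU$ and $R > \Vert u \Vert_{\L^\infty}$, so that the global maximal solution $q(\cdot,u)$ is defined and continuous on all of $[0,T]$, and the compact set $\K_{u,R}$ and its associated bound/Lipschitz constant $L_{u,R}$ are available via \eqref{eqlipK}. The key quantitative estimate I want is the following: if $u' \in \overline{\B}_{\L^\infty}(0,R)$ and its maximal solution $q(\cdot,u')$ happens to stay in the tube $\Vert q(t,u') - q(t,u) \Vert_{\R^n} \le 1$ on some subinterval $[0,\tau] \subset I(u')$, then on that subinterval the triple $(q(t,u'),u'(t),t)$ lies in $\K_{u,R}$, so \eqref{eqlipK} applies and, writing $e(t) := \Vert q(t,u') - q(t,u) \Vert_{\R^n}$, the integral formulations give
\[
e(t) \le \int_0^t \Vert f(q(s,u'),u'(s),s) - f(q(s,u),u(s),s) \Vert_{\R^n} \, ds \le L_{u,R} \int_0^t \bigl( e(s) + \Vert u'(s) - u(s) \Vert_{\R^m} \bigr) \, ds .
\]
By Grönwall's lemma this yields $e(t) \le L_{u,R} e^{L_{u,R} T} \Vert u' - u \Vert_{\L^1_m}$ for all such $t$.

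Next I would choose $\nu_{u,R} := \tfrac12 L_{u,R}^{-1} e^{-L_{u,R} T}$ (any value making the right-hand side $\le 1/2$ works), so that the a~priori bound reads $e(t) \le 1/2$ strictly inside the tube. The point is a standard continuation argument: let $u' \in \E_{u,R}$ and consider the set of times $\tau \in I(u')$ for which $e \le 1$ on $[0,\tau]$. This set is a subinterval containing $0$ (since $e(0)=0$); on it the estimate above forces $e \le 1/2$, so $e$ never actually reaches the value $1$; hence $q(\cdot,u')$ stays in the (closed, bounded) tube and therefore remains bounded on $I(u')$. By the second Cauchy--Lipschitz lemma of the excerpt, a non-global maximal solution is unbounded on a non-closed interval, so boundedness of $q(\cdot,u')$ on $I(u')$ forces $I(u') = [0,T]$, i.e. $u' \in \UU$. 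Once globality is known, the tube estimate holds on all of $[0,T]$, giving $\Vert q(t,u') - q(t,u)\Vert_{\R^n} \le 1$ and $\Vert u'(t)\Vert_{\R^m} \le R$ a.e., which is exactly $(q(t,u'),u'(t),t) \in \K_{u,R}$ for a.e. $t \in [0,T]$.

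The main obstacle is making the bootstrap rigorous: a~priori the Lipschitz estimate \eqref{eqlipK} is only valid while the trajectory stays in $\K_{u,R}$, so one cannot directly apply Grönwall on a fixed interval. I would handle this by defining $\tau^* := \sup\{ \tau \in I(u') : e(t) \le 1 \ \forall t \in [0,\tau] \}$, showing by continuity of $e$ that $e \le 1$ on $[0,\tau^*)$ (and on $[0,\tau^*]$ if $\tau^* \in I(u')$), running the Grönwall argument on $[0,\tau^*)$ to get $e \le 1/2$ there, and then arguing that if $\tau^* < \sup I(u')$ we would have $e(\tau^*) \le 1/2 < 1$, contradicting maximality of $\tau^*$ by continuity — hence $\tau^* = \sup I(u')$, and combined with the boundedness/closedness dichotomy this gives $I(u') = [0,T]$. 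The only other mild technical point is that membership in $\E_{u,R}$ guarantees $\Vert u' - u\Vert_{\L^1_m} \le \nu_{u,R}$ and $\Vert u'\Vert_{\L^\infty_m} \le R$, which are precisely the two inputs the argument needs.
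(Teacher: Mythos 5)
Your proposal is correct and follows essentially the same route as the paper: the same Gr\"onwall estimate inside the tube $\Vert q(\cdot,u')-q(\cdot,u)\Vert_{\R^n}\leq 1$, the same choice of $\nu_{u,R}$ making the resulting bound strictly less than $1$, and the same first-exit-time continuation argument (the paper phrases it as a contradiction at $t_0=\inf\{t : e(t)>1\}$ rather than via your $\tau^*$, but these are mirror images of one another), concluding with boundedness forcing globality. The only cosmetic difference is your explicit formula $\nu_{u,R}=\tfrac12 L_{u,R}^{-1}e^{-L_{u,R}T}$, which degenerates if $L_{u,R}=0$; your parenthetical ``any value making the right-hand side $\leq 1/2$ works'' already covers that trivial case.
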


\begin{proof}
Let $R > \Vert u \Vert_{\L^\infty}$ and let $\nu_{u,R} >0$ be such that $ \nu_{u,R} L_{u,R} e^{T L_{u,R} } < 1$. Let $u' \in \E_{u,R}$. Our aim is to prove that $T \in I(u')$. By contradiction, let us assume that the set
$$ A := \{ t \in I(u') \; \mid \; \Vert q(t,u') - q(t,u) \Vert_{\R^n}  > 1 \}$$
is not empty and let $t_0 := \inf A$. From continuity, it holds that $\Vert q(t_0,u') - q(t_0,u) \Vert_{\R^n} \geq 1$. Moreover, one has $t_0 >0$ since $q(0,u') = q(0,u) =q_0$. Hence, $\Vert q(\t,u') - q(\t,u) \Vert_{\R^n} \leq 1$ for every $\t \in [0,t_0)$. Therefore $(q(\t,u'),u'(\t),\t)$ and $(q(\t,u),u(\t),\t)$ belong to $\K_{u,R}$ for a.e. $\t \in [0,t_0)$.
Since one has
$$
q(t,u') - q(t,u) = \int_0^t  f ( q(\t,u'),u'(\t),\t ) - f ( q(\t,u),u(\t),\t ) \; d\tau ,
$$
for every $t \in I(u')$, it follows from~\eqref{eqlipK} that
\begin{multline*}
\Vert q(t,u') - q(t,u) \Vert_{\R^n} \leq L_{u,R} \int_0^t \Vert  u'(\tau) - u(\tau) \Vert_{\R^m}  \; d \tau  + L_{u,R} \int_{0}^t \Vert   q(\tau,u') - q(\tau,u) \Vert_{\R^n} \; d \tau ,  
\end{multline*}
for every $t \in [0,t_0]$, which implies from the classical Gronwall lemma that
\begin{equation*}
\Vert q(t,u') - q(t,u) \Vert_{\R^n} \leq  L_{u,R} e^{T L_{u,R} } \Vert u'- u \Vert_{\L^1} \leq \nu_R L_{u,R} e^{T L_{u,R} } < 1 ,
\end{equation*}
for every $t \in [0,t_0]$. This raises a contradiction at $t=t_0$. Therefore $A$ is empty. We conclude that $q(\cdot,u')$ is bounded on $I(u')$, then $T \in I(u')$. Moreover, since $A$ is empty, we also conclude that $\Vert q(t,u') - q(t,u) \Vert_{\R^n}  \leq 1$ for every $t \in [0,T]$, and thus $(q(\t,u'),u'(\t),\t) \in \K_{u,R}$ for a.e. $\t \in [0,T]$.
\end{proof}

We conclude this section with the following continuous dependence result.

\begin{proposition}\label{prop30-1-1}
Let $u \in \UU$ and $R > \Vert u \Vert_{\L^\infty}$. The mapping
\begin{equation*}
\fonction{F_{u,R}}{(\E_{u,R},\Vert \cdot \Vert_{\L^1})}{(\C_n,\Vert \cdot \Vert_\infty)}{u'}{q(\cdot,u')}
\end{equation*}
is $C_{u,R}$-Lipschitz continuous for some $C_{u,R} \geq 0$. 
\end{proposition}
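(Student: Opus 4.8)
The plan is to prove the Lipschitz estimate directly from the integral formulation of the Cauchy problem, exactly as in the proof of Proposition~\ref{propEUR}, but now comparing two arbitrary elements $u_1'$, $u_2' \in \E_{u,R}$ rather than comparing to the reference control $u$. First I would recall that, by Proposition~\ref{propEUR}, both $u_1'$ and $u_2'$ belong to $\UU$, so the maximal solutions $q(\cdot,u_1')$ and $q(\cdot,u_2')$ are global on $[0,T]$, and moreover $(q(\t,u_i'),u_i'(\t),\t) \in \K_{u,R}$ for a.e. $\t \in [0,T]$ and $i=1,2$. This is the key point that lets us use the Lipschitz bound~\eqref{eqlipK} on the compact set $\K_{u,R}$ with the \emph{same} constant $L_{u,R}$ for both trajectories.

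Next I would write, for every $t \in [0,T]$,
$$ q(t,u_2') - q(t,u_1') = \int_0^t f(q(\t,u_2'),u_2'(\t),\t) - f(q(\t,u_1'),u_1'(\t),\t) \; d\t, $$
and apply~\eqref{eqlipK} pointwise to get
$$ \Vert q(t,u_2') - q(t,u_1') \Vert_{\R^n} \leq L_{u,R} \int_0^t \Vert u_2'(\t) - u_1'(\t) \Vert_{\R^m} \; d\t + L_{u,R} \int_0^t \Vert q(\t,u_2') - q(\t,u_1') \Vert_{\R^n} \; d\t, $$
so the first integral is bounded by $L_{u,R} \Vert u_2' - u_1' \Vert_{\L^1}$. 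Then the classical Gronwall lemma yields
$$ \Vert q(t,u_2') - q(t,u_1') \Vert_{\R^n} \leq L_{u,R} e^{T L_{u,R}} \Vert u_2' - u_1' \Vert_{\L^1} $$
for every $t \in [0,T]$. Taking the supremum over $t$ gives the claim with $C_{u,R} := L_{u,R} e^{T L_{u,R}}$.

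The only genuine obstacle is the one already resolved by Proposition~\ref{propEUR}: without knowing a priori that both perturbed trajectories stay inside the tube $\K_{u,R}$, one could not invoke the single Lipschitz constant $L_{u,R}$, and a circular argument would ensue (the estimate would be needed to confine the trajectory, but confinement is needed for the estimate). Since Proposition~\ref{propEUR} already provides this confinement for every element of $\E_{u,R}$, the present proof reduces to the routine Gronwall computation above and presents no further difficulty.
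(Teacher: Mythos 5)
Your proof is correct and follows exactly the paper's argument: invoke Proposition~\ref{propEUR} to confine both trajectories to $\K_{u,R}$, subtract the integral formulations, apply the Lipschitz bound~\eqref{eqlipK}, and conclude by Gronwall with the same constant $C_{u,R} = L_{u,R} e^{T L_{u,R}}$. No differences worth noting.
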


\begin{proof}
Let $u'$ and $u''$ be two elements of $\E_{u,R} \subset \UU$. We know that $(q(\t,u''),u''(\t),\t)$ and $(q(\t,u'),u'(\t),\t)$ are elements of $\K_{u,R}$ for a.e. $\t \in [0,T]$. Following the same arguments as in the previous proof, it follows that
\begin{equation*}
\Vert q(t,u'') - q(t,u') \Vert_{\R^n}  \leq  L_{u,R} e^{T L_{u,R}} \Vert u'' - u ' \Vert_{\L^1},
\end{equation*}
for every $t \in [0,T]$. The lemma follows with $C_{u,R} : = L_{u,R} e^{T L_{u,R}} \geq 0$.
\end{proof}

\subsubsection{Implicit spike variations and a differentiable dependence result}

Before introducing the concept of implicit spike variations, we first need to recall the following lemma (see \cite[Paragraph 3.2 p.143]{liyong}). The proof is recalled in Appendix~\ref{appunity}.

\begin{lemma}\label{lem123684}
Let $h \in \L^1_n$. Then, for all $\rho \in (0,1)$, there exists a measurable set $\Q_\rho \subset [0,T]$ such that $\lambda (\Q_\rho ) = \rho T$ and
$$ \sup_{t \in [0,T]} \left\Vert \int_0^t \left( 1 - \frac{1}{\rho} \mathbf{1}_{\Q_\rho} (s) \right) h(s) \; ds \right\Vert_{\R^n} \leq \rho. $$
Note that $\Q_\rho$ depends on $h$.
\end{lemma}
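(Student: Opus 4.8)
The plan is to prove the lemma by reducing to the case of a step function, and then building $\Q_\rho$ explicitly, piece by piece, on a subdivision that is fine enough and adapted to that step function. The only point to keep in mind is that a factor $1/\rho$ will unavoidably appear when estimating the function $\frac1\rho\mathbf 1_{\Q_\rho}$; this is harmless because the target accuracy $\rho$ itself depends on $\rho$, so I would simply force all approximation errors to be of order $\rho^2$ in order to absorb that factor.

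First I would fix $\rho \in (0,1)$ and choose, by density of step functions in $\L^1_n$, a step function $g = \sum_{l=1}^{N_0} c_l \mathbf 1_{[a_{l-1},a_l)}$ with $0 = a_0 < \cdots < a_{N_0} = T$ and $\Vert h - g \Vert_{\L^1_n} \le \rho^2/2$. Using absolute continuity of $t \mapsto \int_0^t \Vert h(s)\Vert_{\R^n}\, ds$ on $[0,T]$, I would also fix $\delta > 0$ such that $\int_A \Vert h\Vert_{\R^n}\, d\lambda \le \rho^2/2$ whenever $\lambda(A) \le \delta$. Then I would consider the subdivision $0 = t_0 < t_1 < \cdots < t_N = T$ obtained by merging the points $\{a_l\}$ with a uniform grid of mesh $<\delta$, so that, setting $J_k := [t_{k-1},t_k)$, the function $g$ is constant on each $J_k$ while $\int_{J_k} \Vert h\Vert_{\R^n}\, d\lambda \le \rho^2/2$ for every $k$.

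Next I would define $\Q_\rho$ as the union over $k$ of the ``left $\rho$-portions'' $E_k := \bigl[\,t_{k-1},\ t_{k-1}+\rho(t_k-t_{k-1})\,\bigr)$, so that $\lambda(\Q_\rho) = \rho\sum_k (t_k-t_{k-1}) = \rho T$. Writing $\varphi(t) := \int_0^t \bigl(1 - \tfrac1\rho\mathbf 1_{\Q_\rho}(s)\bigr) h(s)\, ds$ and splitting $h = g + (h-g)$, the key observation is that the contribution of $g$ vanishes at every grid point: since $g$ is constant on $J_k$ and $\lambda(E_k) = \rho\lambda(J_k)$, one has $\int_{J_k}\bigl(1 - \tfrac1\rho\mathbf 1_{\Q_\rho}\bigr) g\, d\lambda = 0$, hence $\int_0^{t_k}\bigl(1-\tfrac1\rho\mathbf 1_{\Q_\rho}\bigr) g\, d\lambda = 0$. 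Together with the pointwise bound $\bigl|1 - \tfrac1\rho\mathbf 1_{\Q_\rho}(s)\bigr| \le 1/\rho$ for a.e.\ $s$, this gives $\Vert\varphi(t_k)\Vert_{\R^n} \le \tfrac1\rho\Vert h-g\Vert_{\L^1_n} \le \rho/2$. Finally, for an arbitrary $t \in (t_{k-1},t_k)$ I would write $\varphi(t) = \varphi(t_{k-1}) + \int_{t_{k-1}}^t\bigl(1-\tfrac1\rho\mathbf 1_{\Q_\rho}\bigr) h\, d\lambda$ and bound the last term by $\tfrac1\rho\int_{J_k}\Vert h\Vert_{\R^n}\, d\lambda \le \rho/2$, which yields $\Vert\varphi(t)\Vert_{\R^n} \le \rho$ for all $t \in [0,T]$ (the endpoint $t=T$ being a grid point), as required.

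I do not expect a genuine obstacle here: the argument is elementary and the only delicate point is exactly the bookkeeping mentioned above, namely choosing the $\L^1$-error and the mesh in terms of $\rho^2$ so as to tame the $1/\rho$. As a variant that avoids the step-function approximation, one could instead apply Lyapunov's convexity theorem on each $J_k$ to the non-atomic $\R^{n+1}$-valued measure $A \mapsto \bigl(\lambda(A), \int_A h\, d\lambda\bigr)$, producing $E_k \subset J_k$ with $\lambda(E_k) = \rho\lambda(J_k)$ and $\int_{E_k} h\, d\lambda = \rho\int_{J_k} h\, d\lambda$ exactly; then $\varphi$ vanishes at every grid point and only the interior-point estimate remains. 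I would nevertheless favour the elementary version in order to keep the note self-contained.
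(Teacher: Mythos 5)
Your proof is correct, and it takes a genuinely more elementary route than the paper. The paper first shows that $t \mapsto b(t,\cdot) := h\,\mathbf{1}_{[0,t]}$ is continuous from $[0,T]$ into $\L^1_n$, picks a grid $t_0<\dots<t_N$ on which this map oscillates by at most $\tfrac{\rho^2}{2(\rho+1)}$, approximates the $(N+1)$-tuple $B = (b(t_0,\cdot),\dots,b(t_N,\cdot))$ by a \emph{simple} function $J=\sum_i a_i\mathbf{1}_{\RR^i}$ whose level sets $\RR^i$ are arbitrary measurable sets, and then invokes the Sierpinski/Lyapunov nonatomicity theorem to extract $\RR^i_\rho\subset\RR^i$ with $\lambda(\RR^i_\rho)=\rho\lambda(\RR^i)$; the exact cancellation $\int_0^T(1-\tfrac1\rho\mathbf{1}_{\Q_\rho})J=0$ then happens globally over these level sets, and $\Q_\rho=\coprod_i\RR^i_\rho$ is not explicit. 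You instead approximate $h$ itself by a \emph{step} function constant on the cells of a grid that refines both its jump points and a mesh controlled by absolute continuity of $\int\Vert h\Vert$; since the level sets are now intervals, the $\rho$-proportional subset of each cell can simply be taken to be its left subinterval of relative length $\rho$, and the cancellation happens cell by cell. The bookkeeping is identical in spirit (all approximation errors forced to order $\rho^2$ to absorb the $1/\rho$ from the pointwise bound $\vert 1-\tfrac1\rho\mathbf{1}_{\Q_\rho}\vert\le\tfrac1\rho$, half the budget spent at grid points and half inside a cell). What your version buys is self-containedness — no appeal to Sierpinski's theorem, no auxiliary continuity lemma for $b$ — and a completely explicit $\Q_\rho$ given as a finite union of intervals; what the paper's version buys is that it transposes verbatim to any nonatomic measure in place of $\lambda$ and to settings where $h$ is not naturally approximated by interval-based step functions. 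Your closing remark about running Lyapunov's convexity theorem cell by cell on the vector measure $A\mapsto(\lambda(A),\int_A h)$ is also correct and would even remove the interior-point error at grid points, though the interior estimate on each cell is still needed, so nothing is gained quantitatively.
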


Let $u \in \UU$ and $u' \in \L^\infty$. For every $\rho \in [0,1)$, we introduce the so-called \textit{implicit spike variation} $u(\cdot,\rho)$ of $u$ associated to $u'$ as
$$ u(\t,\rho) := \left\lbrace \begin{array}{lcr}
u'(\t) & \text{if} & \t \in \Q_\rho, \\
u(\t) & \text{if} & \t \notin \Q_\rho,
\end{array} \right. $$
for a.e. $\t \in [0,T]$, where $\Q_\rho$ is defined in Lemma~\ref{lem123684} associated to $h_{u,u'} \in \L^\infty_n \subset \L^1_n$ defined by
$$ h_{u,u'} (\t) := f(q(\t,u),u'(\t),\t)-f(q(\t,u),u(\t),\t) , $$
for a.e. $\t \in [0,T]$.\footnote{For $\rho = 0$, we fix $\Q_\rho = \emptyset$.} \\

Finally, we introduce the so-called \textit{variation vector} $ w(\cdot,u,u')$ associated to $(u,u')$ as the unique maximal solution, which is moreover global (see Appendix~\ref{appcauchyproblemclassique}), of the forward linear Cauchy problem given by
$$ \left\lbrace \begin{array}{l}
\dot w(t) = \partial_1 f(q(t,u),u(t),t) \times w(t) + h_{u,u'} (t) , \quad \text{a.e. $t \in [0,T]$,}  \\[5pt]
w(0)=0.
\end{array} \right. $$
Let us prove the following differentiability dependence result.

\begin{proposition}\label{prop984357}
The mapping $F_{u,u'}$ defined by
\begin{equation*}
F_{u,u'}(\rho) := q(\cdot,u(\cdot,\rho)) \in \C_n,
\end{equation*}
for sufficiently small $\rho \geq 0$, is Fr\'echet-differentiable at $\rho = 0$, with $DF_{u,u'}(0) = w(\cdot,u,u')$.
\end{proposition}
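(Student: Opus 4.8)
### Proof plan

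The plan is to compare $F_{u,u'}(\rho) = q(\cdot,u(\cdot,\rho))$ with the first-order expansion $q(\cdot,u) + \rho\, w(\cdot,u,u')$ and show that the difference is $o(\rho)$ in $\Vert\cdot\Vert_\infty$. First I would note that for $\rho$ small enough the control $u(\cdot,\rho)$ lies in some $\E_{u,R}$: indeed $\Vert u(\cdot,\rho)\Vert_{\L^\infty}\leq\max(\Vert u\Vert_{\L^\infty},\Vert u'\Vert_{\L^\infty})=:R_0$, so taking $R>R_0$ fixed, we have $\Vert u(\cdot,\rho)-u\Vert_{\L^1}=\int_{\Q_\rho}\Vert u'-u\Vert_{\R^m}\,d\lambda\to 0$ as $\rho\to 0$ (dominated convergence, since $\lambda(\Q_\rho)=\rho T$), hence $u(\cdot,\rho)\in\E_{u,R}\subset\UU$ for $\rho$ small. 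By Proposition~\ref{prop30-1-1} this already gives $\Vert q(\cdot,u(\cdot,\rho))-q(\cdot,u)\Vert_\infty\leq C_{u,R}\Vert u(\cdot,\rho)-u\Vert_{\L^1}\to 0$, which we will use to control nonlinear remainders. All points $(q(\t,u(\cdot,\rho)),u(\t,\rho),\t)$ and $(q(\t,u),u(\t),\t)$ stay in the compact $\K_{u,R}$ for a.e.\ $\t$, so $f,\partial_1 f,\partial_2 f$ are bounded by $L_{u,R}$ and $\partial_1 f$ is in fact uniformly continuous there.

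Next I would set $z_\rho := q(\cdot,u(\cdot,\rho)) - q(\cdot,u) - \rho\,w(\cdot,u,u')$ and write its integral equation. Using $q(t,u(\cdot,\rho))-q(t,u)=\int_0^t f(q(\t,u(\cdot,\rho)),u(\t,\rho),\t)-f(q(\t,u),u(\t),\t)\,d\t$ and the defining ODE for $w$, one gets
\begin{equation*}
z_\rho(t) = \int_0^t \partial_1 f(q(\t,u),u(\t),\t)\, z_\rho(\t)\, d\t + \int_0^t \big( f(q(\t,u(\cdot,\rho)),u(\t,\rho),\t) - f(q(\t,u),u(\t,\rho),\t) - \partial_1 f(q(\t,u),u(\t),\t)\,[q(\t,u(\cdot,\rho))-q(\t,u)] \big)\, d\t + r_\rho(t),
\end{equation*}
where the key ``spike'' term is
\begin{equation*}
r_\rho(t) := \int_0^t \big( f(q(\t,u),u(\t,\rho),\t) - f(q(\t,u),u(\t),\t) \big)\, d\t - \rho \int_0^t h_{u,u'}(\t)\, d\t = \int_0^t \Big( \mathbf{1}_{\Q_\rho}(\t) - \rho \Big) h_{u,u'}(\t)\, d\t.
\end{equation*}
The crucial observation is that $r_\rho(t) = \rho \int_0^t \big( \tfrac{1}{\rho}\mathbf{1}_{\Q_\rho}(\t) - 1 \big) h_{u,u'}(\t)\, d\t$, so by the defining property of $\Q_\rho$ in Lemma~\ref{lem123684} (applied to $h=h_{u,u'}$), $\Vert r_\rho\Vert_\infty \leq \rho \cdot \rho = \rho^2 = o(\rho)$.

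Then I would estimate the remaining integrand. The first bracketed term is $O(\Vert q(\cdot,u(\cdot,\rho))-q(\cdot,u)\Vert_\infty^2)$ by a first-order Taylor expansion of $f$ in its first variable with integral remainder on $\K_{u,R}$; but more carefully, I should split it as $\big(f(q(\t,u(\cdot,\rho)),u(\t,\rho),\t)-f(q(\t,u),u(\t,\rho),\t)-\partial_1 f(q(\t,u),u(\t,\rho),\t)[q(\t,u(\cdot,\rho))-q(\t,u)]\big) + \big(\partial_1 f(q(\t,u),u(\t,\rho),\t)-\partial_1 f(q(\t,u),u(\t),\t)\big)[q(\t,u(\cdot,\rho))-q(\t,u)]$. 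The first piece is bounded pointwise by $\varepsilon(\Vert q(\cdot,u(\cdot,\rho))-q(\cdot,u)\Vert_\infty)\cdot\Vert q(\cdot,u(\cdot,\rho))-q(\cdot,u)\Vert_\infty$ with $\varepsilon\to 0$, using uniform continuity of $\partial_1 f$ on $\K_{u,R}$; since $\Vert q(\cdot,u(\cdot,\rho))-q(\cdot,u)\Vert_\infty = O(\rho)$ by the Lipschitz bound above, this piece integrates to $o(\rho)$. For the second piece, $\partial_1 f(q(\t,u),\cdot,\t)$ need not be continuous in $u$, but $\partial_1 f(q(\t,u),u(\t,\rho),\t)=\partial_1 f(q(\t,u),u(\t),\t)$ for $\t\notin\Q_\rho$, so this term is supported on $\Q_\rho$ and is bounded by $2L_{u,R}\Vert q(\cdot,u(\cdot,\rho))-q(\cdot,u)\Vert_\infty$, whose integral over $[0,t]$ is at most $2L_{u,R}\,\lambda(\Q_\rho)\,O(\rho) = O(\rho^2) = o(\rho)$. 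Collecting everything, $\Vert z_\rho\Vert_\infty \leq L_{u,R}\int_0^t \Vert z_\rho(\t)\Vert_{\R^n}\,d\t + o(\rho)$, and the Gronwall lemma yields $\Vert z_\rho\Vert_\infty \leq e^{T L_{u,R}}\, o(\rho) = o(\rho)$. This is exactly Fréchet-differentiability at $\rho=0$ with derivative $w(\cdot,u,u')$ (a bounded linear map $\R\to\C_n$).

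The main obstacle, and the whole point of the implicit spike construction, is the spike term $r_\rho$: a naive (explicit) spike variation on a set of measure $\rho T$ would only give $\Vert r_\rho\Vert_\infty = O(\rho)$, not $o(\rho)$, which is insufficient for differentiability (one would only get a one-sided Gâteaux-type estimate). The clever choice of $\Q_\rho$ via Lemma~\ref{lem123684} is precisely what upgrades this to $O(\rho^2)$. A secondary technical point worth care is that $f$ is only assumed $\C^1$ in $(q,u)$ jointly-continuous but not Lipschitz/continuous in $u$ uniformly, so all $u$-dependence of the ``bad'' coefficients must be confined to $\Q_\rho$ (as done above) rather than estimated by continuity in $u$.
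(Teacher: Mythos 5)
Your proof is correct, and its skeleton --- writing the integral equation for $q(\cdot,u(\cdot,\rho)) - q(\cdot,u) - \rho\, w(\cdot,u,u')$, isolating the spike term $\int_0^t(\mathbf{1}_{\Q_\rho}(\t) - \rho)h_{u,u'}(\t)\,d\t$ and bounding it by $\rho^2$ via Lemma~\ref{lem123684}, Taylor-expanding $f$ in the state variable with integral remainder, and closing with the Gronwall lemma --- is the same as the paper's. Where you genuinely differ is in the logical structure and the treatment of the Taylor remainder. The paper argues by contradiction along a sequence $\rho_k \to 0$, extracts a subsequence along which $u(\cdot,\rho_k) \to u$ a.e., and shows the remainder coefficient $\kappa_k \to 0$ by dominated convergence; this only yields convergence along subsequences, which is precisely why the contradiction setup is needed there. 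You instead work directly with $\rho \to 0$ and split the remainder into (i) a piece controlled by the uniform continuity of $\partial_1 f$ on the compact set $\K_{u,R}$, of size $o(1)\cdot O(\rho)$, and (ii) a piece where the control argument of $\partial_1 f$ changes, which you observe vanishes off $\Q_\rho$ and hence contributes only $O(\lambda(\Q_\rho))\cdot O(\rho) = O(\rho^2)$. This gives a quantitative, subsequence-free estimate and is arguably cleaner. One small quibble: your remark that $\partial_1 f(q(\t,u),\cdot,\t)$ ``need not be continuous in $u$'' is not right as stated, since $f$ is assumed of class $\C^1$ in its first two variables; the actual difficulty, which your support-on-$\Q_\rho$ argument correctly circumvents, is that $u(\cdot,\rho)$ converges to $u$ only in measure and not pointwise with a rate, so continuity in the control variable cannot be exploited directly.
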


\begin{proof}
Let $R := \max ( \Vert u \Vert_{\L^\infty}+1 , \Vert u' \Vert_{\L^\infty} )$. Since $\lambda (\Q_\rho) = \rho T$ (see Lemma~\ref{lem123684}), it holds that $\Vert u(\cdot,\rho) - u \Vert_{\L^1} \leq 2RT\rho$ for every $\rho \in [0,1)$. As a consequence, for sufficiently small $\rho \geq 0$, $u(\cdot , \rho) \in \E_{u,R} \subset \UU$ and then $F_{u,u'} (\rho)$ is well-defined. Moreover, it follows from Proposition~\ref{prop30-1-1} that $\Vert q(\cdot,u(\cdot,\rho)) - q(\cdot,u) \Vert_\infty \leq 2RT C_{u,R} \rho$, and consequently $q(\cdot,u(\cdot,\rho))$ uniformly converges on $[0,T]$ to $q(\cdot,u)$. \\

Let us assume by contradiction that $F_{u,u'}$ is not Fr\'echet-differentiable at $\rho = 0$ with $DF_{u,u'}(0) = w(\cdot,u,u')$. Then, there exists $\varepsilon > 0$ et $(\rho_k)_k$ a positive sequence such that $(\rho_k)_k$ tends to zero and such that
$$ \left\Vert \frac{F_{u,u'} (\rho_k) -F_{u,u'} (0) }{\rho_k}-w(\cdot,u,u') \right\Vert_\infty \geq \varepsilon $$ 
for all $k \in \N$. In this proof, for the ease of notations, we denote by $w := w(\cdot,u,u')$, $q := q(\cdot,u)$ and by $q_k := q(\cdot,u(\cdot,\rho_k))$, $u_k := u(\cdot,\rho_k)$ for every $k \in \N$. Since the sequence $(u_k)_k$ converges to $u$ in $\L^1$, we deduce from the (partial) converse of the classical Lebesgue dominated convergence theorem that there exists a subsequence (that we do not relabel) such that $(u_k)_k$ tends to $u$ a.e. on $[0,T]$. For every $k \in \N$ and every $t \in [0,T]$, we define $ z_k (t) :=  \frac{q_k(t) - q(t)}{\rho_k} - w(t)$. From our assumption, it holds that $\Vert z_k \Vert_\infty \geq \varepsilon$ for all $k \in \N$. On the other hand, we have
$$
z_k (t) = \int_0^t \dfrac{ f(q_k(\t),u_k(\t),\t) - f(q(\t),u(\t),\t) }{\rho_k} \\[3pt] 
- \partial_1 f(q(\t),u(\t),\t) \times w(\t) - h_{u,u'}(\t) \; d\t, 
$$
that is,
\begin{multline*}
z_k (t) = \int_0^t \dfrac{ f(q_k(\t),u_k(\t),\t) - f(q(\t),u_k(\t),\t) }{\rho_k} - \partial_1 f(q(\t),u(\t),\t) \times w(\t) \\[3pt] 
+ \dfrac{ f(q(\t),u_k(\t),\t) - f(q(\t),u(\t),\t) }{\rho_k} - h_{u,u'}(\t) \; d\t,
\end{multline*}
for every $t \in [0,T]$. From the classical Taylor formula with integral rest, we obtain that
\begin{multline*}
z_k (t) = \int_0^t \partial_1 f(q(\t),u(\t),\t) \times z_k (\t) \; d\t + \int_0^t \left( \frac{1}{\rho_k} \mathbf{1}_{\Q_{\rho_k}} (\t) - 1 \right) h_{u,u'}(\t) \; d\t \\[3pt]
+ \int_0^t \left[ \int_0^1 \partial_1 f(q(\t)+ \theta (q_k(\t)-q(\t)),u_k(\t),\t) \; d\theta - \partial_1 f(q(\t),u(\t),\t) \right] \frac{q_k (\t) - q(\t)}{\rho_k} \; d\t,
\end{multline*}
for every $t \in [0,T]$. Hence, from Lemma~\ref{lem123684}, we get that
$$ \Vert z_k (t) \Vert_{\R^n} \leq \rho_k + 2 RT C_{u,R} \kappa_k + L_{u,R} \int_0^t \Vert z_k (\t) \Vert_{\R^n} \; d\t, $$
for every $t \in [0,T]$, where
$$ \kappa_k := \int_0^T \int_0^1 \left\Vert \partial_1 f(q(\t)+ \theta (q_k(\t)-q(\t)),u_k(\t),\t) - \partial_1 f(q(\t),u(\t),\t) \right\Vert_{\R^n \times \R^n} \; d\theta  d\t .$$
From the continuity and the boundedness of $\partial_1 f$ on $\K_{u,R}$, since $(u_k)_k$ tends to $u$ a.e. on $[0,T]$ and from the classical Lebesgue dominated convergence theorem, one can easily prove that $(\kappa_k)_k$ tends to zero. Finally, from the classical Gronwall lemma, we obtain that $\Vert z_k (t) \Vert_{\R^n} \leq (\rho_k + 2 RT C_{u,R} \kappa_k ) e^{T L_{u,R}}$ for every $t \in [0,T]$. This raises a contradiction with the inequality $\Vert z_k \Vert_\infty \geq \varepsilon$ for all $k \in \N$. The proof is complete.
\end{proof}

\subsection{Application of the Ekeland variational principle}
Let us introduce $ g =(g_i)_{i=1,\ldots,j} : \C_n \to \C_j$ the application defined by $g(q) := G(q,\cdot)$ for every $q \in \C_n$, and let $\S$ be the nonempty closed convex cone of $\C_j$ defined by $\S := \C ([0,T],(\R^-)^j)$. Thus the running state constraints in Problem~\eqref{theproblem} can equivalently be replaced by
$$ g(q) \in \S. $$
Note that $g$ is of class $\C^1$ with $Dg(q)(w) = \partial_1 G (q,\cdot) \times w$ for every $q$, $w \in \C_n$, and that $\S$ has a nonempty interior. \\

Since $(\C_j,\Vert \cdot \Vert_\infty)$ is a separable Banach space, we endow $\C_j$ with an equivalent norm $\Vert \cdot \Vert_{\C_j}$ such that the associated dual norm $\Vert \cdot \Vert_{\C^*_j}$ is strictly convex (see Proposition~\ref{proprenorming} in Appendix~\ref{app1}). Then, we denote by $d_\S$ the $1$-Lipschitz continuous distance function to $\S$ defined by $d_\S (q) := \inf_{z \in \S} \Vert q - z \Vert_{\C_j}$ for every $q \in \C_j$. Since the dual norm $\Vert \cdot \Vert_{\C^*_j}$ is strictly convex, we know that $d_\S$ is strictly Hadamard-differentiable on $\C_j \bs \S$ with $\Vert Dd_\S (q) \Vert_{\C^*_j} = 1$ for every $q \in \C_j \bs \S$ (see Proposition~\ref{propdistance} in Appendix~\ref{app2}). As a consequence, $d^2_\S$ is also strictly Hadamard-differentiable on $\C_j \bs \S$ with $Dd^2_\S (q) = 2 d_\S (q) Dd_\S (q)$ for every $q \in \C_j \bs \S$. We also recall that $d^2_\S$ is Fr\'echet-differentiable on $\S$ with $Dd^2_\S (q) = 0$ for every $q \in \S$ (see Remark~\ref{remarkdistance} in Appendix~\ref{app2}).  \\

In the whole section, let $q(\cdot,u^*) \in \AC$ and $u^* \in \L^\infty$ be an optimal solution of Problem~\eqref{theproblem}. Let $(R_\ell)_\ell$ be a positive sequence such that $R_\ell > \Vert u^* \Vert_{\L^\infty}$ for every $\ell \in \N$ and such that $\lim_\ell R_\ell = +\infty$. Let $(\varepsilon_k)_k$ be a positive sequence such that $\lim_k \varepsilon_k = 0$. For every $\ell$, $k \in \N$, we consider the penalized functional given by
$$ \fonction{J^\ell_k}{\E^\Omega_{u^*,R_\ell}}{\R^+_*}{u}{\sqrt{\Big( \big( \Psi ( q(T,u) ) - \Psi ( q(T,u^*) ) +\varepsilon_k \big)^+ \Big)^2 + d^2_\S \Big( g \big ( q(\cdot,u) \big) \Big)  },} $$
where
$$ \E^\Omega_{u^*,R_\ell} := \{ u \in \E_{u^*,R_\ell} \; \mid \; u(\t) \in \Omega \text{ for a.e. } \t \in [0,T]  \}. $$
Note that $J^\ell_k$ is a positive functional because of the optimality of $u^*$. We endow $\E^\Omega_{u^*,R_\ell}$ with the classical norm $\Vert \cdot \Vert_{\L^1}$. Since $\Omega$ is a nonempty closed subset of $\R^m$, it follows from the (partial) converse of the classical Lebesgue dominated convergence theorem that $(\E^\Omega_{u^*,R_\ell},\Vert \cdot \Vert_{\L^1})$ is a nonempty closed subset of $(\L^1,\Vert \cdot \Vert_{\L^1})$ and consequently $(\E^\Omega_{u^*,R_\ell},\Vert \cdot \Vert_{\L^1})$ is a complete metric space. Moreover, from the continuities of $\Psi$, $F_{u^*,R_\ell}$ (see Proposition~\ref{prop30-1-1}), $d_\S$ and of $g$, one can easily see that $J^\ell_k$ is continuous on $(\E^\Omega_{u^*,R_\ell},\Vert \cdot \Vert_{\L^1})$.  \\

Moreover it holds that $J^\ell_{k} (u^*) = \varepsilon_k$. As a consequence, from the classical Ekeland variational principle, we conclude that for every $\ell$, $k \in \N$, there exists $u^\ell_k \in \E^\Omega_{u^*,R_\ell}$ such that $\Vert u^\ell_k - u^* \Vert_{\L^1} \leq \sqrt{\varepsilon_k}$ and
\begin{equation}\label{eqtresimp}
-\sqrt{\varepsilon_k} \Vert u - u^\ell_k \Vert_{\L^1} \leq J^\ell_k (u) - J^\ell_k (u^\ell_k),
\end{equation}
for all $u \in \E^\Omega_{u^*,R_\ell}$. In particular, for a fixed $\ell \in \N$, note that the sequence $(u^\ell_k)_k$ converges to $u^*$ in $\L^1$ and consequently, the sequence $(q(\cdot,u^\ell_k))_k$ uniformly converges on $[0,T]$ to $q(\cdot,u^*)$ (see Proposition~\ref{prop30-1-1}). \\

For every $\ell$, $k \in \N$, we introduce
$$ \psi^\ell_k := \dfrac{1}{J^\ell_k (u^\ell_k)} \big( \Psi ( q(T,u^\ell_k) ) - \Psi ( q(T,u^*) ) +\varepsilon_k \big)^+ \geq 0, $$
and
$$ \varphi^\ell_k := \left\lbrace \begin{array}{lcr}
\dfrac{1}{J^\ell_k (u^\ell_k)} d_\S \Big( g ( q(\cdot,u^\ell_k) ) \Big) Dd_\S \Big( g \big( q(\cdot,u^\ell_k) \big) \Big) \in \C^*_j & \text{if} & g ( q(\cdot,u^\ell_k) ) \notin \S, \\[20pt]
0 \in \C^*_j & \text{if} & g ( q(\cdot,u^\ell_k) ) \in \S. \\
\end{array} \right. $$
In particular it holds that $\vert \psi^\ell_k \vert^2 + \Vert \varphi^\ell_k \Vert^2_{\C^*_j} = 1$ for every $\ell$, $k \in \N$. \\

In the sequel our aim is to derive some important inequalities from Inequality~\eqref{eqtresimp} using implicit spike variations on $u_k^\ell$.

\begin{remark}\label{remnontrivial1}
In this remark (and in Remarks~\ref{remnontrivial2} and \ref{remnontrivial3}), our aim is to provide two crucial inequalities satisfied by $\varphi^\ell_k $. In the case $g ( q(\cdot,u^\ell_k) ) \notin \S $, recall that $Dd_\S ( g ( q(\cdot,u^\ell_k) ))$ belongs to the subdifferential of $d_\S$ at the point $g ( q(\cdot,u^\ell_k) )$. As a consequence, in both cases $g ( q(\cdot,u^\ell_k) ) \notin \S $ and $g ( q(\cdot,u^\ell_k) ) \in \S $, it holds that
\begin{equation}
\langle \varphi^\ell_k , z - g ( q(\cdot,u^\ell_k) ) \rangle_{\C^*_j \times \C_j} \leq 0,
\end{equation}
for every $z \in \S$. Since $\S$ has a nonempty interior, there exists $\xi \in \S$ and $\delta > 0$ such that $\xi + \delta z \in \S$ for every $z \in \overline{\B}_{(\C_j , \Vert \cdot \Vert_{\C_j})}  (0,1)$. As a consequence, we obtain that
$$ \delta \langle \varphi^\ell_k , z \rangle_{\C^*_j \times \C_j} \leq \langle \varphi^\ell_k , g ( q(\cdot,u^\ell_k) )-\xi \rangle_{\C^*_j \times \C_j} , $$
for every $z \in \overline{\B}_{(\C_j , \Vert \cdot \Vert_{\C_j})} (0,1)$. We deduce that
\begin{equation}
\delta \Vert \varphi^\ell_k \Vert_{\C^*_j} = \delta \sqrt{1- \vert \psi^\ell_k \vert^2} \leq \langle \varphi^\ell_k , g ( q(\cdot,u^\ell_k) ) - \xi \rangle_{\C^*_j \times \C_j} .
\end{equation}
\end{remark}

\subsubsection{First inequality depending on $\ell$ fixed}
In this section, we fix $\ell \in \N$. Recall that the sequence $(u^\ell_k)_k$ converges to $u^*$ in $\L^1$. Using compactness arguments, we infer the existence of a subsequence of $(\varepsilon_k)_k$ (that we do not relabel)\footnote{The subsequence of $(\varepsilon_k)_k$ is not relabel. However, it is worth to note that the extracted subsequence depends on $\ell$ fixed.} such that $(u^\ell_k)_k$ converges to $u^*$ a.e. on $ [0,T]$, $(\psi^\ell_{k})_k$ converges to some $\psi^\ell \geq 0$ and $(\varphi^\ell_{k})_k$ weakly* converges to some $\varphi^\ell \in \C^*_j$. In particular, it holds that $\vert \psi^\ell \vert^2 + \Vert \varphi^\ell \Vert^2_{\C^*_j} \leq 1$. \\

In the whole section, for the ease of notations, we denote by $q^\ell_k := q(\cdot,u^\ell_k)$ for every $k \in \N$. Let $u' \in \L^\infty$ such that $u'(\t) \in \Omega \cap \overline{\B}_{\R^m} (0,R_\ell)$ for a.e. $\t \in [0,T]$. For every $\rho \in [0,1)$, we consider the implicit spike variation
$$ u^\ell_k(\t,\rho) := \left\lbrace \begin{array}{lcr}
u'(\t) & \text{if} & \t \in \Q_\rho, \\
u^\ell_k(\t) & \text{if} & \t \notin \Q_\rho,
\end{array} \right. $$
for a.e. $\t \in [0,T]$, where $\Q_\rho$ is defined in Lemma~\ref{lem123684} associated to $h_{u^\ell_k,u'} \in \L^\infty_n \subset \L^1_n$ defined by
$$ h_{u^\ell_k,u'} (\t) := f(q^\ell_k(\t),u'(\t),\t)-f(q^\ell_k(\t),u^\ell_k(\t),\t) , $$
for a.e. $\t \in [0,T]$.\footnote{For $\rho = 0$, we fix $\Q_\rho = \emptyset$.} \\

First of all, note that $\Vert u^\ell_k (\cdot,\rho) - u^* \Vert_{\L^1} \leq \Vert u^\ell_k (\cdot,\rho) - u^\ell_k \Vert_{\L^1} + \Vert u^\ell_k - u^* \Vert_{\L^1} \leq 2 R_\ell T \rho + \sqrt{\varepsilon_k} < \nu_{u^*,R_\ell}$ for sufficiently small $\rho$ and sufficiently large $k$ and then $u^\ell_k (\cdot,\rho) \in \E^\Omega_{u^*,R_\ell}$. For such a sufficiently small $\rho$ and sufficiently large $k$, we apply Inequality~\eqref{eqtresimp} with $u = u^\ell_k (\cdot,\rho)$ and we obtain that
$$ - 2R_\ell T \sqrt{\varepsilon_k} \leq \dfrac{J^\ell_k (u^\ell_k (\cdot,\rho)) - J^\ell_k (u^\ell_k)}{\rho} = \dfrac{J^\ell_k (u^\ell_k (\cdot,\rho))^2 - J^\ell_k (u^\ell_k)^2}{\rho} \times \dfrac{1}{J^\ell_k (u^\ell_k (\cdot,\rho)) + J^\ell_k (u^\ell_k)}. $$
From the continuity of $J^\ell_k$, we get that $\lim_{\rho \to 0} J^\ell_k (u^\ell_k (\cdot,\rho)) + J^\ell_k (u^\ell_k) = 2 J^\ell_k (u^\ell_k)$. From the differentiabilities of the application $x \mapsto (x^+)^2$ for $x \in \R$, of $\Psi$, of $g$, of $d^2_\S$ and of $F_{u^\ell_k,u'}$ (see Proposition~\ref{prop984357}), we obtain that
\begin{multline*}
\lim\limits_{\rho \to 0} \dfrac{J^\ell_k (u^\ell_k (\cdot,\rho))^2 - J^\ell_k (u^\ell_k)^2}{\rho} \\
= 2 \Big( \Psi ( q^\ell_k(T) ) - \Psi ( q(T,u^*) ) + \varepsilon_k \Big)^+ \Big\langle \nabla \Psi ( q^\ell_k(T) ) ,  w(T,u^\ell_k,u') \Big\rangle_{\R^n \times \R^n} \\
+ \Big\langle 2 d_\S ( g ( q^\ell_k )) Dd_\S ( g ( q^\ell_k ) ) , Dg ( q^\ell_k ) ( w(\cdot ,u^\ell_k,u') ) \Big\rangle_{\C^*_j \times \C_j},
\end{multline*}
with the convention that the second term is zero if $g( q^\ell_k ) \in \S$. Finally, we have obtained that
\begin{equation}\label{eq512b}
- 2R_\ell T \sqrt{\varepsilon_k} \leq \psi^\ell_k \Big\langle \nabla \Psi ( q^\ell_k(T) ) ,  w(T,u^\ell_k,u') \Big\rangle_{\R^n \times \R^n} + \Big\langle \varphi^\ell_k , Dg ( q^\ell_k ) ( w(\cdot ,u^\ell_k,u') ) \Big\rangle_{\C^*_j \times \C_j} .
\end{equation}
To conclude this section, we need the following result.\footnote{This result requires to fix $\ell \in \N$. Indeed, one needs a bound on $\Vert u^\ell_k \Vert_{\L^\infty}$ in order to conclude from the classical Lebesgue dominated convergence theorem.}

\begin{lemma}
The sequence $(w(\cdot,u^\ell_k,u'))_k$ uniformly converges on $[0,T]$ to $w(\cdot,u^*,u')$.
\end{lemma}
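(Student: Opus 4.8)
The plan is to follow the same Gronwall-plus-dominated-convergence scheme already used for Propositions~\ref{propEUR}, \ref{prop30-1-1} and \ref{prop984357}. Throughout I abbreviate $R := R_\ell$, $q^\ell_k := q(\cdot,u^\ell_k)$, $q^* := q(\cdot,u^*)$, $w^\ell_k := w(\cdot,u^\ell_k,u')$, $w^* := w(\cdot,u^*,u')$, and I set $M := \Vert w^* \Vert_\infty < +\infty$ (the variation vector $w^*$ is continuous on the compact $[0,T]$). Since $u^\ell_k \in \E^\Omega_{u^*,R} \subset \UU$ and $\Vert u'(\t) \Vert_{\R^m} \leq R$ for a.e. $\t$, Proposition~\ref{propEUR} guarantees that the points $(q^\ell_k(\t),u^\ell_k(\t),\t)$, $(q^\ell_k(\t),u'(\t),\t)$, $(q^*(\t),u^*(\t),\t)$ and $(q^*(\t),u'(\t),\t)$ all lie in $\K_{u^*,R}$ for a.e. $\t \in [0,T]$, on which $f$ and $\partial_1 f$ are bounded by $L_{u^*,R}$. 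Recall also that all these variation vectors are \emph{global} solutions, hence satisfy their integral identities, e.g. $w^\ell_k(t) = \int_0^t \big( \partial_1 f(q^\ell_k(\t),u^\ell_k(\t),\t) \times w^\ell_k(\t) + h_{u^\ell_k,u'}(\t) \big) \, d\t$ for every $t \in [0,T]$, and analogously for $w^*$.

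First I would set $\delta_k := w^\ell_k - w^*$, subtract the two integral identities, and insert $\pm\, \partial_1 f(q^\ell_k(\t),u^\ell_k(\t),\t) \times w^*(\t)$ under the integral, obtaining
\begin{multline*}
\delta_k(t) = \int_0^t \partial_1 f(q^\ell_k(\t),u^\ell_k(\t),\t) \times \delta_k(\t) \; d\t \\
+ \int_0^t \big[ \partial_1 f(q^\ell_k(\t),u^\ell_k(\t),\t) - \partial_1 f(q^*(\t),u^*(\t),\t) \big] \times w^*(\t) \; d\t \\
+ \int_0^t \big[ h_{u^\ell_k,u'}(\t) - h_{u^*,u'}(\t) \big] \; d\t,
\end{multline*}
for every $t \in [0,T]$. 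The first integral is bounded in norm by $L_{u^*,R} \int_0^t \Vert \delta_k(\t) \Vert_{\R^n} \, d\t$. Denoting by $a_k$, resp. $b_k$, the $\L^1$-norm of the integrand of the second, resp. third, integral, I get $\Vert \delta_k(t) \Vert_{\R^n} \leq a_k + b_k + L_{u^*,R} \int_0^t \Vert \delta_k(\t) \Vert_{\R^n} \, d\t$ for every $t \in [0,T]$, and the classical Gronwall lemma yields $\Vert \delta_k \Vert_\infty \leq (a_k + b_k)\, e^{T L_{u^*,R}}$. It thus only remains to show that $a_k \to 0$ and $b_k \to 0$.

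This last point is the only real content, and I would get it from the Lebesgue dominated convergence theorem on $[0,T]$. For the domination, the integrand of $a_k$ is bounded by $2 L_{u^*,R} M$ and that of $b_k$ by $4 L_{u^*,R}$ (using the boundedness of $f$ and $\partial_1 f$ on $\K_{u^*,R}$ recalled above, and that $[0,T]$ has finite measure). For the pointwise a.e. convergence: by Proposition~\ref{prop30-1-1}, $q^\ell_k \to q^*$ uniformly on $[0,T]$, and along the subsequence fixed earlier $u^\ell_k \to u^*$ a.e. on $[0,T]$, so $(q^\ell_k(\t),u^\ell_k(\t),\t) \to (q^*(\t),u^*(\t),\t)$ for a.e. $\t$; continuity of $\partial_1 f$ then makes the integrand of $a_k$ tend to $0$ a.e., and continuity of $f$ makes $f(q^\ell_k(\t),u'(\t),\t) \to f(q^*(\t),u'(\t),\t)$ and $f(q^\ell_k(\t),u^\ell_k(\t),\t) \to f(q^*(\t),u^*(\t),\t)$ a.e., hence $h_{u^\ell_k,u'}(\t) - h_{u^*,u'}(\t) \to 0$ a.e. and the integrand of $b_k$ tends to $0$ a.e. Dominated convergence gives $a_k \to 0$ and $b_k \to 0$, which completes the argument. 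Note that this is precisely where $\ell$ must be kept fixed: the dominating constants $L_{u^*,R_\ell}$ and $M$ (hence the whole estimate) depend on $R_\ell$, and no bound uniform in $\ell$ on $\Vert u^\ell_k \Vert_{\L^\infty}$ is available.
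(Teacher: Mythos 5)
Your proof is correct and follows essentially the same route as the paper: the identical three-term decomposition of $w(\cdot,u^\ell_k,u')-w(\cdot,u^*,u')$ obtained by inserting the cross term, followed by Gronwall and the dominated convergence theorem using the a.e.\ convergence of $(u^\ell_k)_k$ and the uniform convergence of $(q^\ell_k)_k$. You merely spell out the details (the constants $a_k$, $b_k$ and their domination) that the paper leaves as ``similar arguments,'' and your closing remark on why $\ell$ must stay fixed matches the paper's footnote.
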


\begin{proof}
In this proof, for the ease of notations, we denote by $q^* :=q(\cdot,u^*)$, $w := w(\cdot,u^*,u')$ and by $w_k := w(\cdot, u^\ell_k ,u')$ for all $k \in \N$. It holds that
\begin{multline*}
w_k (t) - w(t) = \int_0^t \partial_1 f (q^\ell_k (\t),u^\ell_k(\t),\t) \times w_k (\t) + h_{u^\ell_k,u'}(\t) \\
- \partial_1 f (q^* (\t),u^*(\t),\t) \times w (\t) - h_{u^*,u'}(\t) \; d\t ,
\end{multline*}
that is,
\begin{multline*}
w_k (t) - w(t) = \int_0^t \partial_1 f (q^\ell_k (\t),u^\ell_k(\t),\t) \times (w_k (\t) -w(\t)) \; d\t + \int_0^t  h_{u^\ell_k,u'}(\t) -  h_{u^*,u'}(\t) \; d\t \\[3pt]
 + \int_0^t \Big( \partial_1 f (q^\ell_k (\t),u^\ell_k(\t),\t) - \partial_1 f (q^* (\t),u^*(\t),\t) \Big) \times w(\t) \; d\t ,
\end{multline*}
for every $t \in [0,T]$. Recall that $u^\ell_k \in \E_{u^*,R_\ell}$ and $u' \in \overline{\B}_{\L^\infty} (0,R_\ell)$, then $(q^\ell_k(\t),u^\ell_k(\t),\t) \in \K_{u^*,R_\ell}$ (see Proposition~\ref{propEUR}) and $(q^\ell_k(\t),u'(\t),\t) \in \K_{u^*,R_\ell}$ for a.e. $\t \in [0,T]$ and recall that $f$ and $\partial_1 f$ are bounded on $\K_{u^*,R_\ell}$ by $L_{u^*,R_\ell} \geq 0$. Recall also that $(u^\ell_k)_k$ tends to $u^*$ a.e. on $[0,T]$. Finally, using similar arguments than in the proof of Proposition~\ref{prop984357} and the classical Gronwall lemma, one can easily conclude the proof.
\end{proof}

Using the above lemma and the $\C^1$-regularity of $\Psi$ and $g$, by letting $k$ tend to $+\infty$ in Inequality~\eqref{eq512b}, we obtain that
\begin{equation}\label{firstineq}
0 \leq \psi^\ell \Big\langle \nabla \Psi ( q(T,u^*) ) ,  w(T,u^*,u') \Big\rangle_{\R^n \times \R^n} + \Big\langle \varphi^\ell , Dg ( q(\cdot,u^*) ) ( w(\cdot ,u^*,u') ) \Big\rangle_{\C^*_j \times \C_j}  .
\end{equation}

\begin{remark}\label{remnontrivial2}
Letting $k$ tend to $+\infty$ in Remark~\ref{remnontrivial1}, one can easily obtain the two following crucial inequalities:
\begin{equation}
\langle \varphi^\ell , z - g ( q(\cdot,u^*) ) \rangle_{\C^*_j \times \C_j} \leq 0,
\end{equation}
for every $z \in \S$, and
\begin{equation}
\delta \sqrt{1- \vert \psi^\ell \vert^2} \leq \langle \varphi^\ell , g ( q(\cdot,u^*) ) - \xi \rangle_{\C^*_j \times \C_j} .
\end{equation}
\end{remark}

\subsubsection{Second inequality independent of $\ell$}
In the previous section, we have obtained Inequality~\eqref{firstineq} that is valid for a fixed $\ell \in \N$ and for every $u' \in \L^\infty$ such that $u'(\t) \in \Omega \cap \overline{\B}_{\R^m}(0,R_\ell)$. Our aim in this section is to remove the dependence in~$R_\ell$ (in order to cover the case where $\Omega$ is unbounded). \\

Since $\vert \psi^{\ell} \vert^2 + \Vert \varphi^{\ell} \Vert^2_{\C^*_j} \leq 1$ for every $\ell \in \N$ and from compactness arguments, we infer the existence of a subsequence of $(R_\ell)_\ell$ (that we do not relabel) such that $(\psi^{\ell})_\ell$ converges to some $\psi \geq 0$ and $(\varphi^{\ell})_\ell$ weakly* converges to some $\varphi \in \C^*_j$. \\

Let $u' \in \L^\infty$ such that $u'(\t) \in \Omega$ for a.e. $\t \in [0,T]$. Let $\ell \in \N$ be sufficiently large in order to have $R_\ell > \Vert u' \Vert_{\L^\infty}$.  From Inequality~\eqref{firstineq}, it holds that
$$ 0 \leq \psi^{\ell}\Big\langle \nabla \Psi ( q(T,u^*) ) ,  w(T,u^*,u') \Big\rangle_{\R^n \times \R^n} + \Big\langle \varphi^{\ell} , Dg ( q(\cdot,u^*) ) ( w(\cdot ,u^*,u') ) \Big\rangle_{\C^*_j \times \C_j}  .$$
Letting $\ell $ tend to $+\infty$, we prove that
\begin{equation}\label{secondineq}
0 \leq \psi \Big\langle \nabla \Psi ( q(T,u^*) ) ,  w(T,u^*,u') \Big\rangle_{\R^n \times \R^n} + \Big\langle \varphi , Dg ( q(\cdot,u^*) ) ( w(\cdot ,u^*,u') ) \Big\rangle_{\C^*_j \times \C_j} ,
\end{equation}
for every $u' \in \L^\infty$ such that $u'(\t) \in \Omega$ for a.e. $\t \in [0,T]$.

\begin{remark}\label{remnontrivial3}
Letting $\ell $ tend to $+\infty$ in Remark~\ref{remnontrivial2}, one can easily obtain the two following crucial inequalities:
\begin{equation}\label{eqsat1psi}
\langle \varphi , z - g ( q(\cdot,u^*) ) \rangle_{\C^*_j \times \C_j} \leq 0,
\end{equation}
for every $z \in \S$, and
\begin{equation}\label{eqsat2psi}
\delta \sqrt{1- \vert \psi \vert^2} \leq \langle \varphi , g ( q(\cdot,u^*) ) - \xi \rangle_{\C^*_j \times \C_j} .
\end{equation}
Inequality~\eqref{eqsat2psi} proves that the couple $(\psi,\varphi)$ is not trivial. 
\end{remark}

\subsection{Introduction of the adjoint vector $p$}

\subsubsection{Introduction of $\eta$}
Let us denote by $\varphi = (\varphi_i)_{i=1,\ldots,j}$ where $\varphi_i \in \C^*_1$ and let us apply the classical Riesz theorem (see Proposition~\ref{propriesz} in Appendix~\ref{annexeBV1}). For every $i=1,\ldots,j$, there exists a unique $\eta_i \in \NBV_1$ such that
$$ \langle \varphi_i , z \rangle_{\C^*_1 \times \C_1} = \int_0^T  z(\t) \; d\eta_i (\t) , $$
for every $z \in \C_1$. Recall that $\varphi_i = 0$ if and only if $\eta_i = 0$. As a consequence, from Remark~\ref{remnontrivial3}, the couple $(\psi,\eta)$ is not trivial, where $\eta := (\eta_i)_{i=1,\ldots,j} \in \NBV_j$. \\

Taking 
$$ z = \Big( g_1 ( q(\cdot,u^*) ) , \ldots , g_{i-1} ( q(\cdot,u^*) ) , 0 , g_{i+1} ( q(\cdot,u^*) ), \ldots , g_{j} ( q(\cdot,u^*) ) \Big) \in \S $$
and
$$ z = \Big( g_1 ( q(\cdot,u^*) ) , \ldots , g_{i-1} ( q(\cdot,u^*) ) , 2 g_{i} ( q(\cdot,u^*) ) , g_{i+1} ( q(\cdot,u^*) ), \ldots , g_{j} ( q(\cdot,u^*) ) \Big) \in \S $$
in Inequality~\eqref{eqsat1psi}, we obtain that $\langle \varphi_i , g_i ( q(\cdot,u^*) ) \rangle_{\C^*_1 \times \C_1} = 0$, that is,
$$ \int_0^T G_i (q(\t,u^*),\t ) \; d\eta_i (\t) = 0, $$
for every $i=1,\ldots,n$. \\

Moreover, it follows that $\langle \varphi_i , z \rangle_{\C^*_1 \times \C_1} \geq 0$ for every $z \in \C_1^+$. From the classical Riesz theorem (see Proposition~\ref{propriesz} in Appendix~\ref{annexeBV1}), we deduce that $\eta_i$ is monotically increasing on $[0,T]$ for every $i=1,\ldots,j$.

\subsubsection{Definition of $p$}
Using notations introduced in Appendix~\ref{annexeBVnotations}, one has
$$ \langle \varphi , z \rangle_{\C^*_j \times \C_j} = \int_0^T \langle z(\t) , d\eta (\t) \rangle $$ 
for every $z \in \C_j$. From Inequality~\eqref{secondineq} and since $Dg(q)(w) = \partial_1 G(q,\cdot) \times w$ for every $q$, $w \in \C_n$, we have proved that
\begin{equation}\label{thirdineq}
0 \leq \psi \Big\langle \nabla \Psi ( q(T,u^*) ) ,  w(T,u^*,u') \Big\rangle_{\R^n \times \R^n} + \int_0^T \Big\langle \partial_1 G ( q(\t,u^*),\t ) \times w (\t,u^*,u') ,  d\eta (\t) \Big\rangle,
\end{equation}
for every $u' \in \L^\infty$ such that $u'(\t) \in \Omega$ for a.e. $\t \in [0,T]$. \\

Let $Z(\cdot,\cdot)$ be the state-transition matrix associated to $\partial_1 f(q(\cdot,u^*),u^*,\cdot) \in \L^\infty([0,T],\R^{n,n})$ (see Appendix~\ref{appstatetransition}). From the classical Duhamel formula (see Proposition~\ref{propduhamelclassique} in Appendix~\ref{appcauchyproblemclassique}), it holds that
\begin{equation*}
w(t,u^*,u') = \int_0^t Z(t,s) \times h_{u^*,u'} (s) \; ds
\end{equation*}
for every $t \in [0,T]$. Replacing $w(\cdot,u^*,u')$ in \eqref{thirdineq}, using first the Fubini-type formula~\eqref{eq326} and then Equality~\eqref{eq7645}, one can obtain that
\begin{multline}\label{eq698235}
0 \leq \int_0^T \Big\langle h_{u^*,u'}(s) , \psi Z(T,s)^\top \times \nabla \Psi ( q(T,u^*) ) \\
 + \int_s^T Z(\t,s)^\top \times \partial_1 G( q(\t,u^*),\t)^\top \times d\eta (\t) \Big\rangle_{\R^n \times \R^n} \; ds,
\end{multline}
for every $u' \in \L^\infty$ such that $u'(\t) \in \Omega$ for a.e. $\t \in [0,T]$. \\

Let $p \in \BV_n$ be the unique global solution of the backward linear Cauchy-Stieltjes problem given by
$$ \left\lbrace \begin{array}{l}
- dp = \partial_1 f ( q(\cdot,u^*),u^*,\cdot )^\top \times p \; dt + \sum_{i=1}^j \partial_1 G_i (q(\cdot,u^*),\cdot) \; d\eta_i , \quad \text{on $[0,T]$,}  \\[5pt]
p(T)= \psi \nabla \Psi ( q(T,u^*) ).
\end{array} \right. $$
We refer to Proposition~\ref{propexistCSP} in Appendix~\ref{appderniereCSP} for the existence and uniqueness of $p$. Note that $p$ is independent of $u'$. From the Duhamel-type formula (see Proposition~\ref{propduhamelstieltjes} in Appendix~\ref{appderniereCSP}), it holds that
\begin{equation*}
p(s) = \psi Z(T,s)^\top \times \nabla \Psi ( q(T,u^*) ) + \int_s^T Z(\t,s)^\top \times \partial_1 G( q(\t,u^*),\t)^\top \times d\eta (\t) \in \R^n ,
\end{equation*}
for every $s \in [0,T]$. It follows from the above expression of $p$ and from Inequality~\eqref{eq698235} that $ \int_0^T \langle h_{u^*,u'}(s) , p(s) \rangle_{\R^n \times \R^n} ds \geq 0  $, that is,
\begin{equation}\label{eq821}
\int_0^T H(q(s,u^*),u'(s),p(s),s) - H(q(s,u^*),u^*(s),p(s),s) \; ds \geq 0,
\end{equation}
for every $u' \in \L^\infty$ such that $u'(\t) \in \Omega$ for a.e. $\t \in [0,T]$.

\subsection{End of the proof}
Let $v \in \Omega$ be fixed. Let $t \in [0,T)$ be a continuity point of $p \in \BV_n$ and be a Lebesgue point of the application $s \mapsto H(q(s,u^*),u^*(s),p(s),s)$ which belongs to $\L^\infty_1$. Let $\alpha \in (0,T-t)$ and let us consider
$$ u'(\t) := \left\lbrace \begin{array}{lcr}
v & \text{if} & \t \in [t,t+\alpha), \\
u^*(\t) & \text{if} & \t \notin [t,t+\alpha),
\end{array} \right. $$
for a.e. $\t \in [0,T]$. From Inequality~\eqref{eq821}, it holds that
$$ \int_t^{t+\alpha} H(q(s,u^*),v,p(s),s) - H(q(s,u^*),u^*(s),p(s),s) \; ds \geq 0. $$
Dividing by $\alpha > 0$ and letting $\alpha \to 0^+$, we obtain that
$$ H(q(t,u^*),v,p(t),t) - H(q(t,u^*),u^*(t),p(t),t) \geq 0. $$
Since the last inequality is true for every $v \in \Omega$ and for a.e. $t \in [0,T]$, we obtain the maximization condition
$$ u^*(t) \in \argmin_{v \in \Omega}  H(q(t,u^*),v,p(t),t) $$
for a.e. $t \in [0,T]$.

\appendix

\section{Proof of Lemma~\ref{lem123684}}\label{appunity}
Recall that the classical Lesbesgue measure $\lambda$ is a nonatomic measure (see, \textit{e.g.}, \cite[Remark 1.161 p.111]{fons}). As a consequence, from the classical Sierpinski (or Lyapunov) theorem (see \cite{sier} or \cite[p.37]{frys}), for all measurable set $\RR \subset [0,T]$, there exists a measurable set $\RR_\rho \subset \RR$ such that $\lambda (\RR_\rho) = \rho \lambda (\RR)$ for all $\rho \in (0,1)$. \\

The whole section is dedicated to the proof of Lemma~\ref{lem123684}. Let $\rho \in (0,1)$.

\begin{lemma}\label{lem876423}
Let $b : [0,T]^2 \to \R^n$ be defined by
$$ b(t,s) := h(s) \mathbf{1}_{[0,t]}(s). $$
Then, $b \in \C([0,T],\L^1_n)$.
\end{lemma}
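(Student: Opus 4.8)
The plan is to regard $b$ as the map $t \mapsto b(t,\cdot) = h \, \mathbf{1}_{[0,t]}$ from $[0,T]$ into $\L^1_n$ and to prove its continuity directly from the absolute continuity of the indefinite integral of an $\L^1$ function (equivalently, from the dominated convergence theorem). First I would check that the map is well defined: for each fixed $t \in [0,T]$, the function $s \mapsto b(t,s) = h(s)\mathbf{1}_{[0,t]}(s)$ is measurable and satisfies $\Vert b(t,s) \Vert_{\R^n} \leq \Vert h(s) \Vert_{\R^n}$ for a.e. $s \in [0,T]$, so $b(t,\cdot) \in \L^1_n$ since $h \in \L^1_n$.

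Next, fix $t \in [0,T]$ and let $t_k \to t$ with $t_k \in [0,T]$; say $t_k \geq t$ (the case $t_k \leq t$ is symmetric, and in general one splits the sequence into the two monotone parts). Then
$$ \Vert b(t_k,\cdot) - b(t,\cdot) \Vert_{\L^1_n} = \int_0^T \Vert h(s) \Vert_{\R^n} \, \bigl\vert \mathbf{1}_{[0,t_k]}(s) - \mathbf{1}_{[0,t]}(s) \bigr\vert \; ds = \int_t^{t_k} \Vert h(s) \Vert_{\R^n} \; ds . $$
Since $s \mapsto \Vert h(s) \Vert_{\R^n}$ belongs to $\L^1([0,T],\R)$, its indefinite integral $r \mapsto \int_0^r \Vert h(s) \Vert_{\R^n} \, ds$ is absolutely continuous on $[0,T]$, hence continuous, so the right-hand side above tends to $0$ as $k \to +\infty$. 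This gives the sequential continuity of $t \mapsto b(t,\cdot)$, and since $[0,T]$ is metric this is continuity, i.e. $b \in \C([0,T],\L^1_n)$.

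Alternatively, and with the same cost, one may avoid even writing the integral as $\int_t^{t_k}$: for each $s \in [0,T] \setminus \{t\}$ one has $\mathbf{1}_{[0,t_k]}(s) \to \mathbf{1}_{[0,t]}(s)$, hence $\Vert h(s) \Vert_{\R^n} \bigl\vert \mathbf{1}_{[0,t_k]}(s) - \mathbf{1}_{[0,t]}(s) \bigr\vert \to 0$ for $\lambda$-a.e. $s$ (the exceptional point $s = t$ being $\lambda$-null), while this integrand is dominated by $\Vert h(\cdot) \Vert_{\R^n} \in \L^1([0,T],\R)$; the classical Lebesgue dominated convergence theorem then yields $\Vert b(t_k,\cdot) - b(t,\cdot) \Vert_{\L^1_n} \to 0$. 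There is no real obstacle here: the only point requiring a word of care is that the pointwise convergence of the indicators fails exactly at $s = t$, which is harmless since $\lambda(\{t\}) = 0$.
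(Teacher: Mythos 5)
Your proof is correct and follows essentially the same route as the paper: both reduce $\Vert b(t_k,\cdot)-b(t,\cdot)\Vert_{\L^1}$ to $\int_{t}^{t_k}\Vert h(s)\Vert_{\R^n}\,ds$ and conclude from the absolute continuity of the indefinite integral of an $\L^1$ function (the paper handles decreasing and increasing sequences separately, exactly as you indicate). Your added remarks on well-definedness and the dominated-convergence variant are fine but not needed.
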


\begin{proof}
Let $t \in [0,T]$ and let $(t_n) \subset [0,T]$ be a decreasing sequence such that $t_n \to t$. Then, it holds that
\begin{multline*}
\Vert b(t_n,\cdot) - b(t,\cdot) \Vert_{\L^1} = \int_0^T \vert b(t_n,s)-b(t,s) \vert \; ds \\ = \int_0^t  \vert b(t_n,s)-b(t,s) \vert \; ds + \int_t^{t_n}  \vert b(t_n,s)-b(t,s) \vert \; ds = \int_t^{t_n} \vert h(s) \vert \; ds \to 0.
\end{multline*}
Similarly, we prove that $\Vert b(t_n,\cdot) - b(t,\cdot) \Vert_{\L^1} \to 0$ for any increasing sequence $(t_n) \subset [0,T]$ such that $t_n \to t$. The proof is complete.
\end{proof}

Since $[0,T]$ is compact, there exists $\delta > 0$ such that $\Vert b(t,\cdot) - b(\bar{t},\cdot) \Vert_{\L^1} \leq \frac{ \rho^2}{2(\rho+1)}$ for all $t$, $\bar{t} \in [0,T]$ satisfying $\vert t - \bar{t} \vert < \delta$. In the sequel, we fix $0 = t_0 < t_1 < \ldots < t_N = T$ such that $\vert t_{r+1}-t_r \vert < \delta$ for all $r=0,\ldots,N-1$ and we define
$$ B(\cdot) := \Big( b(t_0,\cdot), b(t_1,\cdot), \ldots , b(t_N,\cdot) \Big) \in \L^1 ([0,T],(\R^n)^{N+1}). $$

\begin{lemma}\label{lem641987}
There exists a measurable set $\Q_\rho \subset [0,T]$ such that $\lambda (\Q_\rho) = \rho T$ and
$$ \left\Vert \int_0^T \left( 1 - \frac{1}{\rho} \mathbf{1}_{\Q_\rho} (s) \right) B(s) \; ds \right\Vert_{(\R^n)^{N+1}} \leq \frac{\rho}{2}. $$
\end{lemma}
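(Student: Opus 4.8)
The plan is to reduce this vector-valued statement to the scalar Sierpinski (Lyapunov) theorem recalled at the start of this appendix, via a simple-function approximation of $B$ in $\L^1$. Since $B \in \L^1([0,T],(\R^n)^{N+1})$ and simple functions are dense in this space, I would first fix a finite measurable partition $[0,T] = \bigsqcup_{r=1}^{M} E_r$ together with constants $c_1,\ldots,c_M \in (\R^n)^{N+1}$ such that the simple function $\bar B := \sum_{r=1}^{M} c_r \mathbf{1}_{E_r}$ satisfies $\Vert B - \bar B \Vert_{\L^1} \leq \frac{\rho^2}{2(\rho+1)}$.

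Next, for each $r = 1,\ldots,M$, I would apply the scalar Sierpinski theorem to the measurable set $E_r$ to obtain a measurable subset $\Q_\rho^r \subset E_r$ with $\lambda(\Q_\rho^r) = \rho\,\lambda(E_r)$, and set $\Q_\rho := \bigcup_{r=1}^{M} \Q_\rho^r \subset [0,T]$. Because the $E_r$ are pairwise disjoint, one has $\Q_\rho \cap E_r = \Q_\rho^r$, hence $\lambda(\Q_\rho) = \rho \sum_{r=1}^{M} \lambda(E_r) = \rho T$, which is the required measure condition. The key point of this construction is that $\bar B$ is annihilated exactly:
$$ \int_0^T \Big( 1 - \tfrac{1}{\rho}\mathbf{1}_{\Q_\rho}(s) \Big) \bar B(s)\,ds = \sum_{r=1}^{M} c_r \Big( \lambda(E_r) - \tfrac{1}{\rho}\lambda(\Q_\rho^r) \Big) = 0 . $$
Therefore $\int_0^T (1 - \tfrac1\rho\mathbf{1}_{\Q_\rho}) B = \int_0^T (1 - \tfrac1\rho\mathbf{1}_{\Q_\rho})(B - \bar B)$, and since $\vert 1 - \tfrac1\rho\mathbf{1}_{\Q_\rho}(s)\vert \leq \tfrac{\rho+1}{\rho}$ for a.e. $s \in [0,T]$ (it equals $1$ outside $\Q_\rho$ and $\tfrac{1-\rho}{\rho}$ inside $\Q_\rho$, the latter since $\rho < 1$), I would conclude that
$$ \Big\Vert \int_0^T \Big(1 - \tfrac1\rho\mathbf{1}_{\Q_\rho}(s)\Big) B(s)\,ds \Big\Vert_{(\R^n)^{N+1}} \leq \tfrac{\rho+1}{\rho}\,\Vert B - \bar B\Vert_{\L^1} \leq \tfrac{\rho}{2}, $$
which is the desired estimate.

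I do not expect a genuine difficulty here: the only ingredients are density of simple functions in $\L^1$, finite additivity of $\lambda$ across the partition, and the already-recalled scalar Lyapunov theorem. The one point requiring a little care is the bookkeeping of constants, namely choosing the $\L^1$-accuracy of the approximation so that, after multiplication by the uniform bound $\tfrac{\rho+1}{\rho}$ on $\vert 1 - \tfrac1\rho\mathbf{1}_{\Q_\rho}\vert$, the error stays $\leq \tfrac{\rho}{2}$. (One could alternatively bypass the approximation by invoking the finite-dimensional vector-valued Lyapunov convexity theorem for the nonatomic vector measure $E \mapsto (\lambda(E),\int_E B)$, which yields directly a set $\Q_\rho$ with $\lambda(\Q_\rho) = \rho T$ and $\int_{\Q_\rho} B = \rho \int_0^T B$, hence even a vanishing integral; but the piecewise argument stays within the scalar statement recalled above.)
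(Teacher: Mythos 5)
Your proof is correct and is essentially identical to the paper's: approximate $B$ by a simple function within $\frac{\rho^2}{2(\rho+1)}$ in $\L^1$, apply the scalar Sierpinski/Lyapunov theorem on each cell of the partition to build $\Q_\rho$, note the simple-function term vanishes exactly, and bound the remainder by $\frac{\rho+1}{\rho}\cdot\frac{\rho^2}{2(\rho+1)}=\frac{\rho}{2}$. No further comment is needed.
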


\begin{proof}
Since $B \in \L^1 ([0,T],(\R^n)^{N+1})$, there exists a simple function $J : [0,T] \to (\R^n)^{N+1}$ such that $\int_0^T \Vert B(s) - J(s) \Vert_{(\R^n)^{N+1}} ds \leq \frac{\rho^2}{2(\rho +1)}$. Let us denote by $J := \sum_{i=1}^K a_i \mathbf{1}_{\RR^i}$, where $a_i \in (\R^n)^{N+1}$ and $\RR^i \subset [0,T]$ are measurable sets such that $\coprod_{i=1}^K \RR^i = [0,T]$. Since $\lambda$ is nonatomic, there exist $\RR^i_\rho \subset \RR^i$ such that $\lambda (\RR^i_\rho) = \rho \lambda (\RR^i)$ for all $i=1,\ldots,K$. Let us define $\Q_\rho := \coprod_{i=1}^K \RR^i_\rho \subset [0,T]$. Note that $\lambda (\Q_\rho ) = \rho T$. Moreover, it holds that
\begin{multline*}
 \left\Vert \int_0^T \left( 1 - \frac{1}{\rho} \mathbf{1}_{\Q_\rho} (s) \right) B(s) \; ds \right\Vert_{(\R^n)^{N+1}} \\
 \leq   \left\Vert \int_0^T \left( 1 - \frac{1}{\rho} \mathbf{1}_{\Q_\rho} (s) \right) J(s) \; ds \right\Vert_{(\R^n)^{N+1}} +  \left\Vert \int_0^T \left( 1 - \frac{1}{\rho} \mathbf{1}_{\Q_\rho} (s) \right) (B(s)-J(s)) \; ds \right\Vert_{(\R^n)^{N+1}} .
\end{multline*}
The second integral in the left term can be easily bounded by $\frac{\rho}{2}$ and the first one is equal to
$$  \left\Vert \sum_{i=1}^K a_i \int_0^T \left( \mathbf{1}_{\RR^i} (s) - \frac{1}{\rho} \mathbf{1}_{\Q_\rho \cap \RR^i} (s) \right) \; ds \right\Vert_{(\R^n)^{N+1}} = \left\Vert \sum_{i=1}^K a_i \left( \lambda (\RR^i) - \frac{1}{\rho} \lambda (\RR^i_\rho) \right)  \right\Vert_{(\R^n)^{N+1}} = 0.$$
The proof is complete.
\end{proof}

Let us now conclude the proof of Lemma~\ref{lem123684}. Let $t \in [0,T]$. There exists $r \in \{ 0,\ldots,N-1 \}$ such that $t \in [t_r,t_{r+1}]$. In particular, it holds that $\vert t-t_r \vert < \delta$ and thus $\Vert b(t,\cdot) - b(t_r,\cdot) \Vert_{\L^1} \leq \frac{\rho^2}{2(\rho +1)}$ (see remark after Lemma~\ref{lem876423}). It holds that
\begin{multline*}
\left\Vert \int_0^t \left( 1 - \frac{1}{\rho} \mathbf{1}_{\Q_\rho} (s) \right) h(s) \; ds \right\Vert_{\R^n} = \left\Vert \int_0^T \left( 1 - \frac{1}{\rho} \mathbf{1}_{\Q_\rho} (s) \right) b(t,s) \; ds \right\Vert_{\R^n} \\ 
\leq  \left\Vert \int_0^T \left( 1 - \frac{1}{\rho} \mathbf{1}_{\Q_\rho} (s) \right) (b(t,s)-b(t_r,s)) \; ds \right\Vert_{\R^n} + \left\Vert \int_0^T \left( 1 - \frac{1}{\rho} \mathbf{1}_{\Q_\rho} (s) \right) b(t_r,s) \; ds \right\Vert_{\R^n} .
\end{multline*}
The first term can be bounded by $(1 + \frac{1}{\rho}) \Vert b(t,\cdot) - b(t_r,\cdot) \Vert_{\L^1} \leq \frac{\rho}{2 }$ and the second one can be bounded by $ \left\Vert \int_0^T \left( 1 - \frac{1}{\rho} \mathbf{1}_{\Q_\rho} (s) \right) B(s)  ds \right\Vert_{(\R^n)^{N+1}} \leq \frac{\rho}{2} $ (see Lemma~\ref{lem641987}). Finally, we have proved that
$$ \left\Vert \int_0^t \left( 1 - \frac{1}{\rho} \mathbf{1}_{\Q_\rho} (s) \right) h(s) \; ds \right\Vert_{\R^n} \leq \rho .$$
The proof of Lemma~\ref{lem123684} is complete.

\section{Some recalls about Banach spaces geometry}

\subsection{Renorming a separable Banach space}\label{app1}

Let $(X,\Vert \cdot \Vert)$ be a normed linear space. The \textit{dual space} of $(X,\Vert \cdot \Vert)$ is $X^* := \mathcal{L}( (X,\Vert \cdot \Vert), \R)$ and $X^*$ can be endowed with the \textit{dual norm of $\Vert \cdot \Vert$} defined by
$$ \fonction{\Vert \cdot \Vert_*}{X^*}{\R^+}{f}{\sup\limits_{ \substack{ x \in X \\ \Vert x \Vert \leq 1 } } \; \vert \langle f , x \rangle_{X^* \times X} \vert  .} $$
In this case, we denote $(X^*,\Vert \cdot \Vert_*) = \mathrm{dual} ( (X,\Vert \cdot \Vert ))$. Recall that $(X^* , \Vert \cdot \Vert_*)$ is a Banach space, even if $(X,\Vert \cdot \Vert)$ is not.

\begin{lemma}\label{lem69876}
Let $(X,\Vert \cdot \Vert)$ be a Banach space\footnote{The Banach assumption is necessary in order to apply \cite[Proposition 3.13 p.63]{brez} deriving from the classical Banach-Steinhaus theorem.} and $(X^* ,\Vert \cdot \Vert_*) = \mathrm{dual} ( (X,\Vert \cdot \Vert ))$. Let $N^*$ be a norm on $X^*$ equivalent to $\Vert \cdot \Vert_*$. Then, the following properties are equivalent:
\begin{enumerate}
\item $N^*$ is weak* lower semicontinuous on $(X^* ,\Vert \cdot \Vert_*)$;
\item There exists $N$ a norm on $X$ equivalent to $\Vert \cdot \Vert$ such that $(X^*,N^*) = \mathrm{dual}((X,N))$.
\end{enumerate}
\end{lemma}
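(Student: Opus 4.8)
The plan is to establish the two implications separately: $(2)\Rightarrow(1)$ is immediate from the definition of a dual norm, and for $(1)\Rightarrow(2)$ I would build the predual norm $N$ explicitly as a supremum over the unit ball of $N^*$ and then invoke the bipolar theorem for the dual pair $\langle X^*,X\rangle$. The key point, and the place where hypothesis (1) is really used, is that weak* lower semicontinuity of $N^*$ forces its unit ball to be weak* closed.

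For $(2)\Rightarrow(1)$ I would argue as follows. If $(X^*,N^*)=\mathrm{dual}((X,N))$, then $N^*(f)=\sup_{N(x)\le 1}|\langle f,x\rangle_{X^*\times X}|$ for every $f\in X^*$. For each fixed $x\in X$ the evaluation $f\mapsto\langle f,x\rangle_{X^*\times X}$ is weak* continuous, hence so is $f\mapsto|\langle f,x\rangle_{X^*\times X}|$; being a supremum of weak* continuous real-valued functions, $N^*$ is then weak* lower semicontinuous.

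For $(1)\Rightarrow(2)$ I would proceed in three steps. First, set $B^*:=\{f\in X^*:N^*(f)\le 1\}$; since $N^*$ is equivalent to $\Vert\cdot\Vert_*$ there are $0<a\le b$ with $a\Vert f\Vert_*\le N^*(f)\le b\Vert f\Vert_*$, so $\tfrac{1}{b}\,\overline{\B}_{X^*}(0,1)\subseteq B^*\subseteq\tfrac{1}{a}\,\overline{\B}_{X^*}(0,1)$, and in particular $B^*$ is $\Vert\cdot\Vert_*$-bounded, convex, symmetric, contains $0$, and — because $N^*$ is weak* lower semicontinuous — is weak* closed, hence weak* compact by Banach--Alaoglu. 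Second, define
$$ N(x):=\sup_{f\in B^*}|\langle f,x\rangle_{X^*\times X}|,\qquad x\in X; $$
this is a seminorm, finite since $B^*$ is bounded, and from the Hahn--Banach identity $\Vert x\Vert=\sup_{\Vert f\Vert_*\le 1}|\langle f,x\rangle_{X^*\times X}|$ together with the sandwich above one gets $\tfrac{1}{b}\Vert x\Vert\le N(x)\le\tfrac{1}{a}\Vert x\Vert$, so $N$ is a norm on $X$ equivalent to $\Vert\cdot\Vert$. Third, I would identify the dual norm $N'$ of $(X,N)$ with $N^*$: by construction the closed unit ball of $(X,N)$ is $B_N=\{x\in X:|\langle f,x\rangle_{X^*\times X}|\le 1\text{ for all }f\in B^*\}$, so the closed unit ball of $(X^*,N')$ is $\{f\in X^*:|\langle f,x\rangle_{X^*\times X}|\le 1\text{ for all }x\in B_N\}$, i.e. the bipolar of $B^*$ in the dual pair $\langle X^*,X\rangle$; by the bipolar theorem this equals the weak* closure of the absolutely convex hull of $B^*$, which is $B^*$ itself since $B^*$ is already convex, symmetric and weak* closed. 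Hence $N'=N^*$, which is precisely assertion (2).

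The main obstacle is concentrated in the third step, recovering $B^*$ as its own bipolar: the only property of $N^*$ used there beyond equivalence with $\Vert\cdot\Vert_*$ is the weak* closedness of $B^*$, i.e. exactly hypothesis (1) — without it one would only recover the weak* closure of $B^*$ and would obtain a strictly smaller dual norm. The remaining verifications (finiteness and equivalence of $N$, convexity and symmetry of the balls) are routine; as the footnote indicates, completeness of $X$ is what justifies the Banach--Steinhaus-type facts about the weak* topology used along the way.
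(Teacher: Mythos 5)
Your proposal is correct and follows essentially the same route as the paper: the predual norm is defined by the same formula $N(x)=\sup_{N^*(f)\le 1}|\langle f,x\rangle_{X^*\times X}|$, the equivalence constants are obtained the same way, and hypothesis (1) enters at the same point, namely to guarantee that the $N^*$-unit ball is weak* closed so that it coincides with the unit ball of the dual norm of $N$. The only cosmetic difference is that you invoke the bipolar theorem for the pair $\langle X^*,X\rangle$ where the paper unfolds that theorem's content by hand, separating a point $g$ with $\tilde N(g)=1<N^*(g)$ from the weak* closed convex ball $\overline{\B}_{(X^*,N^*)}(0,1)$ via a weak* continuous functional $\langle\cdot,x\rangle_{X^*\times X}$.
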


\begin{remark}
This lemma is given in \cite[Lemma 3.94 p.251]{penot}.
\end{remark}

\begin{proof}
$2 \Rightarrow 1.$ Since $(X,\Vert \cdot \Vert)$ is a Banach space and since $N$ is equivalent to $\Vert \cdot \Vert$, note that $(X,N)$ is a Banach space. Let us prove that $N^*$ is weak* lower semicontinuous on $(X^* ,\Vert \cdot \Vert_*)$. Let $(f_n) \subset X^*$ and $f \in X^*$. From \cite[Proposition 3.13 p.63]{brez}, $(f_n)$ weak* converges to $f$ in $(X^* ,\Vert \cdot \Vert_*) = \mathrm{dual} ((X,\Vert \cdot \Vert))$ if and only if $\langle f_n , x \rangle_{X^* \times X}$ tends to $\langle f , x \rangle_{X^* \times X}$ for every $x \in X$ if and only if $(f_n)$ weak* converges to $f$ in $(X^* ,N^*) = \mathrm{dual} ((X,N))$. Since $(X,N)$ is a Banach space, we obtain from \cite[Proposition 3.13 p.63]{brez} that $(N^*(f_n))$ is bounded and $N^* (f) \leq \liminf N^* (f_n)$. \\

$1 \Rightarrow 2.$ We know that there exist $0 < \mu_1 \leq \mu_2$ such that $\mu_1 \Vert f \Vert_* \leq N^* (f) \leq \mu_2 \Vert f\Vert_*$ for every $f \in X^*$. Our proof is based on two steps. \\

\textit{First step - Definition of $N$ and equivalence to $\Vert \cdot \Vert$.} We define
$$ \fonction{N}{X}{\R^+}{x}{\sup\limits_{ \substack{ f \in X^* \\ N^* (f) \leq 1 } } \; \vert \langle f , x \rangle_{X^* \times X} \vert  .} $$ 
First of all, let us note that $N$ is well-defined since $\vert \langle f , x \rangle_{X^* \times X} \vert \leq \Vert f \Vert_* \Vert x \Vert \leq \frac{1}{\mu_1} N^* (f) \Vert x \Vert \leq \frac{1}{\mu_1} \Vert x \Vert$ for every $x \in X$ and every $f \in X^*$ such that $N^*(f) \leq 1$. In particular it holds that $N(x) \leq \frac{1}{\mu_1} \Vert x \Vert$ for every $x \in X$. Let us prove that $N$ is a norm on $X$. Clearly we have $N(\lambda x) = \vert \lambda \vert N (x)$ and $N(x+y) \leq N(x)+N(y)$ for every $x$, $y \in X$ and every $\lambda \in \R$. Moreover, it holds that $N(0)=0$. Now let us consider $x \neq 0$. From the classical Hahn-Banach theorem (see \cite[Corollary 1.3 p.3]{brez}) applied to $(X,\Vert \cdot \Vert)$ (with dual $(X^*,\Vert \cdot \Vert_*)$), there exists $f \in X^*$ such that $\langle f , x \rangle_{X^* \times X} = \Vert x \Vert^2$ and $\Vert f \Vert_* = \Vert x \Vert$. Let $g := \frac{f}{\mu_2 \Vert x \Vert} \in X^*$. It holds that $N^* (g) = \frac{N^* (f)}{\mu_2 \Vert x \Vert} \leq \frac{\mu_2 \Vert f \Vert_*}{\mu_2 \Vert x \Vert} = 1$ and $\vert \langle g , x \rangle_{X^* \times X} \vert = \frac{\vert \langle f , x \rangle_{X^* \times X} \vert}{\mu_2 \Vert x \Vert} = \frac{\Vert x \Vert}{\mu_2} $. Thus $N(x) \geq \frac{\Vert x \Vert}{\mu_2} > 0$. As a consequence, we have proved that $N(x) = 0$ if and only if $x=0$. We conclude that $N$ is a norm on $X$. Moreover, we have also proved that $\frac{1}{\mu_2} \Vert x \Vert  \leq N(x) \leq \frac{1}{\mu_1} \Vert x \Vert $ for every $x \in X$ (the case $x=0$ is obvious). As a consequence, $N$ is equivalent to $\Vert \cdot \Vert$. \\

\textit{Second step - $(X^*,N^*) = \mathrm{dual}((X,N))$.} Since $N$ is equivalent to $\Vert \cdot \Vert$, $\mathcal{L}( (X,N), \R) = \mathcal{L}( (X,\Vert \cdot \Vert), \R) = X^*$, \textit{i.e.} $X^*$ is the dual space of $(X,N)$. Let us introduce $\tilde{N}$ the dual norm of $N$ on $X^*$ given by
$$ \fonction{\tilde{N}}{X^*}{\R^+}{f}{\sup\limits_{ \substack{ x \in X \\ N (x) \leq 1 } } \; \vert \langle f , x \rangle_{X^* \times X} \vert  .} $$
In particular, we have $(X^*,\tilde{N}) =\mathrm{dual}((X,N))$. Our aim is to prove that $N^* = \tilde{N}$. Firstly, let $f \in X^*$ such that $f \neq 0$. Let $x \in X$ such that $N(x) \leq 1$. From the definition of $N$, since $N^* (\frac{f}{N^* (f)} ) = 1$, we have $\vert \langle \frac{f}{N^* (f)} , x \rangle_{X^* \times X} \vert \leq N(x) \leq 1$. Thus, $\vert \langle f , x \rangle_{X^* \times X} \vert \leq N^*(f) $ for every $x \in X$ such that $N(x) \leq 1$. As a consequence, from the definition of $\tilde{N}$, we obtain that $\tilde{N}(f) \leq N^* (f)$ for every $f \in X^*$ (the case $f=0$ is obvious). Secondly, let us assume by contradiction that there exists $f_0 \in X^*$ such that $\tilde{N}(f_0) < N^* (f_0)$. Thus $f_0 \neq 0$ and we can define $g := \frac{f_0}{\tilde{N}(f_0)} \in X^*$ satisfying $\tilde{N} (g) = 1 < N^* (g)$. Hence $g \notin \overline{\B}_{(X^*,N^*)} (0,1)$. From Hypothesis~$1$, one can easily see that $\overline{\B}_{(X^*,N^*)} (0,1)$ is a nonempty weakly* closed convex of $(X^*,\Vert \cdot \Vert_*)$. Then, recall that the dual of $(X^*,\Vert \cdot \Vert_*) = \mathrm{dual}((X,\Vert \cdot \Vert))$ endowed with the classical weak* topology is the set $(\langle \cdot , x \rangle_{X^* \times X})_{x \in X}$ (see \cite[Proposition 3.14 p.64]{brez}). Finally, from the classical Hahn-Banach theorem\footnote{The classical Hahn-Banach theorem can be applied here since $(X^*,\Vert \cdot \Vert_*) = \mathrm{dual}((X,\Vert \cdot \Vert))$ endowed with the classical weak* topology is a topological vector space that is locally convex.} (see \cite[Theorem 1.7 p.7]{brez}), there exists $x \in X$, $\alpha \in \R$ and $\varepsilon > 0$ such that $\langle f , x \rangle_{X^* \times X} \leq \alpha - \varepsilon < \alpha + \varepsilon \leq \langle g , x \rangle_{X^* \times X} $ for every $f \in \overline{\B}_{(X^*,N^*)} (0,1)$. As a consequence, from the definition of $N$, we obtain that $N(x) < \langle g , x \rangle_{X^* \times X} \leq \tilde{N}(g) N(x) $ and finally $\tilde{N}(g) > 1$. This is a contradiction.
\end{proof}

\begin{proposition}\label{proprenorming}
Let $(X,\Vert \cdot \Vert)$ be a separable Banach space and $(X^* ,\Vert \cdot \Vert_*) = \mathrm{dual} ( (X,\Vert \cdot \Vert ))$. There exists a norm $N$ on $X$ equivalent to $\Vert \cdot \Vert$ such that:
\begin{enumerate}
\item $N^*$ is equivalent on $X^*$ to $\Vert \cdot \Vert_*$;
\item $N^*$ is strictly convex on $X^*$;
\end{enumerate} 
where $(X^*,N^*) = \mathrm{dual}((X,N))$.
\end{proposition}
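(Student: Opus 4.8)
The plan is to construct, directly on the dual space $X^*$, an equivalent norm $N^*$ that is both strictly convex and \emph{weak$^*$ lower semicontinuous} on $(X^*,\Vert \cdot \Vert_*)$, and then to invoke Lemma~\ref{lem69876} ($1 \Rightarrow 2$) to recover a predual norm $N$ on $X$ with $(X^*,N^*) = \mathrm{dual}((X,N))$. We may assume $X \neq \{0\}$. Using separability, I would first fix a sequence $(x_k)_{k \geq 1}$ dense in the closed unit ball $\overline{\B}_X(0,1)$; then $\Vert x_k \Vert \leq 1$ for all $k$, and $(x_k)_k$ is total in $X$ (its closed linear span contains $\overline{\B}_X(0,1)$, hence equals $X$), so the evaluations $f \mapsto \langle f , x_k \rangle_{X^* \times X}$ separate the points of $X^*$. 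I would then define
$$ N^* (f) := \left( \Vert f \Vert_*^2 + \sum_{k=1}^{+\infty} 2^{-k} \, \langle f , x_k \rangle_{X^* \times X}^2 \right)^{1/2} , \qquad f \in X^* . $$
Writing $p_1 := \Vert \cdot \Vert_*$ and $p_2(f) := \big( \sum_{k \geq 1} 2^{-k} \langle f , x_k \rangle_{X^* \times X}^2 \big)^{1/2}$, so that $N^* = (p_1^2 + p_2^2)^{1/2}$, one checks that $N^*$ is a seminorm, being the composition of the linear map $f \mapsto \big( f , ( 2^{-k/2} \langle f , x_k \rangle_{X^* \times X} )_{k \geq 1} \big)$ into the $\ell^2$-product $X^* \oplus_2 \ell^2$ (normed by $\Vert (u,v) \Vert := ( \Vert u \Vert_*^2 + \Vert v \Vert_{\ell^2}^2 )^{1/2}$) with that product norm; and $N^*$ is a norm because $N^* \geq \Vert \cdot \Vert_*$. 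From $\langle f , x_k \rangle_{X^* \times X}^2 \leq \Vert f \Vert_*^2$ and $\sum_{k \geq 1} 2^{-k} = 1$ we get $\Vert f \Vert_* \leq N^*(f) \leq \sqrt{2}\,\Vert f \Vert_*$, so $N^*$ is equivalent to $\Vert \cdot \Vert_*$.

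Next I would verify that $N^*$ is weak$^*$ lower semicontinuous on $(X^*,\Vert \cdot \Vert_*)$, which is exactly the hypothesis of Lemma~\ref{lem69876}. Let $(f_n) \subset X^*$ weak$^*$ converge to $f \in X^*$. Then $\Vert f \Vert_*^2 \leq \liminf_n \Vert f_n \Vert_*^2$, and since $\langle f_n , x_k \rangle_{X^* \times X} \to \langle f , x_k \rangle_{X^* \times X}$ for every $k$, each finite sum passes to the limit, so that for every fixed $M \in \N^*$,
$$ \Vert f \Vert_*^2 + \sum_{k=1}^{M} 2^{-k} \langle f , x_k \rangle_{X^* \times X}^2 \; \leq \; \liminf_n \Big( \Vert f_n \Vert_*^2 + \sum_{k=1}^{M} 2^{-k} \langle f_n , x_k \rangle_{X^* \times X}^2 \Big) \; \leq \; \liminf_n N^*(f_n)^2 . $$
Letting $M \to +\infty$ gives $N^*(f)^2 \leq \liminf_n N^*(f_n)^2$. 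By Lemma~\ref{lem69876} there is a norm $N$ on $X$ equivalent to $\Vert \cdot \Vert$ with $(X^*,N^*) = \mathrm{dual}((X,N))$, which establishes item~1.

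It remains to prove item~2, the strict convexity of $N^*$. I would rely on the elementary fact that if $p_1$ is a seminorm and $p_2$ a \emph{strictly convex} norm on the same vector space, then $q := (p_1^2 + p_2^2)^{1/2}$ is a strictly convex norm. Here $p_2$ is Hilbertian: it is the norm of the inner product $(f,g) \mapsto \sum_{k \geq 1} 2^{-k} \langle f , x_k \rangle_{X^* \times X} \langle g , x_k \rangle_{X^* \times X}$, which is positive definite precisely because $(x_k)_k$ is total, hence $p_2$ is strictly convex by the parallelogram identity. For the elementary fact: let $f \neq g$ with $q(f) = q(g) = 1$. From $p_1\big(\tfrac{f+g}{2}\big) \leq \tfrac12(p_1(f)+p_1(g))$ and $\big(\tfrac{a+b}{2}\big)^2 \leq \tfrac12(a^2+b^2)$ we get $p_1\big(\tfrac{f+g}{2}\big)^2 \leq \tfrac12(p_1(f)^2+p_1(g)^2)$; the analogous estimate for $p_2$ is \emph{strict}, since either $p_2(f) \neq p_2(g)$ and the numerical inequality is strict, or $p_2(f) = p_2(g) > 0$ and strict convexity of $p_2$ gives $p_2\big(\tfrac{f+g}{2}\big) < p_2(f)$, while $p_2(f) = p_2(g) = 0$ is impossible as $f \neq g$. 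Adding, $q\big(\tfrac{f+g}{2}\big)^2 < \tfrac12(q(f)^2+q(g)^2) = 1$, i.e. $q\big(\tfrac{f+g}{2}\big) < 1$. Applying this with $p_1 = \Vert \cdot \Vert_*$, the $p_2$ above and $q = N^*$ completes the proof.

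The point requiring most care is the weak$^*$ lower semicontinuity of the quadratic perturbation (the truncation step), since that is what makes Lemma~\ref{lem69876} applicable and thus lets us descend from $X^*$ back to $X$; the secondary subtlety is the case distinction $p_2(f) = p_2(g)$ versus $p_2(f) \neq p_2(g)$ in the strict-convexity step, and the observation that separability is needed only to provide the countable total family $(x_k)_k$ underlying the whole construction. Beyond these, this is the classical recipe of perturbing the dual norm by a strictly convex, weak$^*$ lower semicontinuous term, and nothing deeper is involved.
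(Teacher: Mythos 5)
Your proof is correct and takes essentially the same route as the paper: perturb $\Vert\cdot\Vert_*$ by the weighted Hilbertian seminorm $f\mapsto\big(\sum_k 2^{-k}\langle f,x_k\rangle^2\big)^{1/2}$ built from a dense sequence in the unit ball, check equivalence, strict convexity and weak$^*$ lower semicontinuity, and conclude via Lemma~\ref{lem69876}. The only (harmless) variations are cosmetic: you combine the two pieces in an $\ell^2$ fashion where the paper simply adds them, so your strict-convexity step goes through the parallelogram identity and the squared-sum lemma instead of the Cauchy--Schwarz equality case, and your weak$^*$ l.s.c.\ step uses truncation of the series instead of dominated convergence.
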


\begin{remark}
This proposition is given in \cite[Theorem 2.18 p.42]{liyong}.
\end{remark}

\begin{proof}
Our proof is based on four steps, including the application of Lemma~\ref{lem69876}. \\

\textit{First step (definition of $N^*$ and equivalence to $\Vert \cdot \Vert_*$).} Let $(e_k)_{k \in \N^*} \subset X$ be a sequence dense in $\overline{\B}_{(X,\Vert \cdot \Vert)}(0,1)$. Then, let us consider the linear operator given by
$$ \fonction{A}{X^*}{\ell^2(\N^*,\R)}{f}{\left( \dfrac{1}{2^{k/2}} \langle f , e_k \rangle_{X^* \times X} \right)_{k \in \N^*}.} $$
Let us note that
$$ \Vert A(f) \Vert^2_{\ell^2} = \sum_{k \in \N^*} \dfrac{1}{2^k} \vert \langle f , e_k \rangle_{X^* \times X} \vert^2 \leq \Vert f \Vert^2_* ,$$
for every $f \in X^*$. Now we define
$$ \fonction{N^*}{X^*}{\R^+}{f}{\Vert f \Vert_* + \Vert A(f) \Vert_{\ell^2} .} $$
Clearly $N^*$ is a norm on $X^*$ that is equivalent to $\Vert \cdot \Vert_*$ since $\Vert f \Vert_* \leq N^* (f) \leq 2 \Vert f \Vert_*$ for every $f \in X^*$. \\

\textit{Second step (strict convexity of $N^*$).} Let $f$, $g \in X^*$ and let $\lambda \in (0,1)$ such that $N^* (\lambda f + (1- \lambda) g) = \lambda N^* (f) + (1- \lambda ) N^* (g)$. Our aim is to prove that $f = \mu g$ for some $\mu \geq 0$. Since $\Vert \cdot \Vert_*$ and $\Vert A(\cdot) \Vert_{\ell^2}$ are convex, we get that $\Vert A (\lambda f + (1- \lambda) g) \Vert_{\ell^2} = \lambda \Vert A(f) \Vert_{\ell^2} + (1- \lambda ) \Vert A(g) \Vert_{\ell^2}$. Computing the square of the previous equality and using the linearity of $A$ and the bilinearity of $\langle \cdot , \cdot \rangle_{\ell^2}$, one can easily obtain that $ \langle A(f) , A(g) \rangle_{\ell^2} = \Vert A ( f ) \Vert_{\ell^2} \Vert A (g) \Vert_{\ell^2} $. From the classical Cauchy-Schwarz inequality in $\ell^2(\N^*,\R)$, there exists $\mu \geq 0$ such that $A(f) = \mu A(g)$, that is, $A(f-\mu g) = 0$ and $\langle f - \mu g, e_k \rangle_{X^* \times X} = 0$ for every $k \in \N^*$. From density and homogeneity, we easily obtain that $\langle f - \mu g, x \rangle_{X^* \times X} = 0$ for every $x \in X$, \textit{i.e.} $f-\mu g=0$. \\

\textit{Third step ($N^*$ is weak* lower semicontinuous on $(X^* ,\Vert \cdot \Vert_*)$).} Let $(f_n) \subset X^*$ and $f \in X^*$ such that $(f_n)$ weak* converges to $f$ in $(X^* ,\Vert \cdot \Vert_*) = \mathrm{dual} ((X,\Vert \cdot \Vert))$, \textit{i.e.} $\langle f_n , x \rangle_{X^* \times X}$ tends to $\langle f , x \rangle_{X^* \times X}$ for every $x \in X$ (see \cite[Proposition 3.13 p.63]{brez}). Let us prove that $N^* (f) \leq \liminf N^*(f_n)$. First of all, recall that $(\Vert f_n \Vert_*)$ is bounded by some $M \geq 0$ and that $\Vert f \Vert_* \leq \liminf \Vert f_n \Vert_*$ (see \cite[Proposition 3.13 p.63]{brez}). Thus, $\liminf N^* (f_n) \geq \liminf \Vert f_n \Vert_* + \liminf \Vert A( f_n ) \Vert_{\ell^2} \geq \Vert f \Vert_* + \liminf \Vert A( f_n ) \Vert_{\ell^2}$. Let us recall that $\Vert A( f_n ) \Vert_{\ell^2}^2 = \sum_{k \in \N^*} \frac{1}{2^k} \langle f_n , e_k \rangle^2_{X^* \times X}$. Since $(\frac{1}{2^k} \langle f_n , e_k \rangle^2_{X^* \times X})$ converge to $\frac{1}{2^k} \langle f , e_k \rangle^2_{X^* \times X}$ and since $\vert \frac{1}{2^k} \langle f_n , e_k \rangle^2_{X^* \times X} \vert \leq \frac{1}{2^k} \Vert f_n \Vert_*^2 \Vert e_k \Vert^2 \leq \frac{M^2}{2^k} \in \ell^1(\N^* , \R)$ for every $n \in \N$, we obtain from the classical Lebesgue dominated convergence theorem that $\lim \Vert A(f_n) \Vert_{\ell^2} = \Vert A(f) \Vert_{\ell^2}$. Finally, we have proved that $\liminf N^* (f_n) \geq \Vert f \Vert_* + \Vert A( f ) \Vert_{\ell^2} = N^*(f) $. \\

\textit{Fourth step (conclusion).} We conclude the proof by applying Lemma~\ref{lem69876}.
 \end{proof}

\subsection{The distance function}\label{app2}
For this section, we essentially refer to \cite{mordu1,penot}. \\

In the whole section, $(X,\Vert \cdot \Vert)$ denotes a normed linear space and $(X^* ,\Vert \cdot \Vert_*) := \mathrm{dual} ( (X,\Vert \cdot \Vert ))$. In this section, we denote by $\langle \cdot , \cdot \rangle := \langle \cdot , \cdot \rangle_{X^* \times X}$.

\begin{lemma}\label{lemadhintconvex}
Let $\S \subset X$ be a convex subset such that $\mathrm{Int}(\S) \neq \emptyset$. Then, $\mathrm{Adh}( \mathrm{Int}(\S)) = \mathrm{Adh} (\S)$.
\end{lemma}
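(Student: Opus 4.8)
The plan is to prove the two inclusions $\mathrm{Adh}(\mathrm{Int}(\S)) \subseteq \mathrm{Adh}(\S)$ and $\mathrm{Adh}(\S) \subseteq \mathrm{Adh}(\mathrm{Int}(\S))$. The first inclusion is immediate and requires no hypothesis: since $\mathrm{Int}(\S) \subseteq \S$, taking closures preserves the inclusion. So the entire content of the lemma lies in the reverse inclusion, for which the convexity of $\S$ and the nonemptiness of its interior are essential.

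For the reverse inclusion, fix a point $x_0 \in \mathrm{Int}(\S)$, which exists by hypothesis, and let $\delta > 0$ be such that $\overline{\B}_{(X,\Vert \cdot \Vert)}(x_0,\delta) \subseteq \S$. Let $x \in \mathrm{Adh}(\S)$ be arbitrary; the goal is to show $x \in \mathrm{Adh}(\mathrm{Int}(\S))$. First I would handle the case $x \in \S$ and build approximating points inside $\mathrm{Int}(\S)$ by sliding along the segment from $x$ toward $x_0$: for $t \in (0,1]$, set $x_t := (1-t)x + t x_0$. The key step is the standard convexity fact that $x_t \in \mathrm{Int}(\S)$ for every $t \in (0,1]$ whenever $x \in \S$ and $x_0 \in \mathrm{Int}(\S)$ — indeed, one checks directly that $\overline{\B}(x_t, t\delta) \subseteq \S$ by writing any $y$ in that ball as the convex combination $(1-t)x + t\big(x_0 + \tfrac{1}{t}(y - x_t)\big)$ and noting $x_0 + \tfrac{1}{t}(y-x_t) \in \overline{\B}(x_0,\delta) \subseteq \S$. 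Since $x_t \to x$ as $t \to 0^+$, we conclude $x \in \mathrm{Adh}(\mathrm{Int}(\S))$.

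It remains to treat a general $x \in \mathrm{Adh}(\S)$ not necessarily in $\S$. Here I would take a sequence $(y_n) \subseteq \S$ with $y_n \to x$, apply the previous paragraph to each $y_n$ to obtain $z_n \in \mathrm{Int}(\S)$ with $\Vert z_n - y_n \Vert \leq 1/n$ (choosing $t = t_n$ small enough depending on $n$), and then $\Vert z_n - x \Vert \leq \Vert z_n - y_n \Vert + \Vert y_n - x \Vert \to 0$, so $x \in \mathrm{Adh}(\mathrm{Int}(\S))$. Alternatively, and perhaps more cleanly, one observes that $\mathrm{Adh}(\S) = \mathrm{Adh}(\mathrm{Adh}(\S))$ and the segment argument applies verbatim to any $x \in \mathrm{Adh}(\S)$ since the proof that $x_t \in \mathrm{Int}(\S)$ only used that $x$ can be approximated by points of $\S$ together with the openness estimate — but to keep things elementary I would stick with the two-step version. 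The main obstacle, such as it is, is simply getting the convexity estimate $\overline{\B}(x_t, t\delta) \subseteq \S$ written correctly; everything else is routine.
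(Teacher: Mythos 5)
Your proof is correct and rests on the same key idea as the paper's: the convexity argument showing that points on the open segment from an interior point of $\S$ toward a point of $\S$ (or of $\mathrm{Adh}(\S)$) remain in $\mathrm{Int}(\S)$. The only difference is organizational --- you first treat $x \in \S$ and then pass to general $x \in \mathrm{Adh}(\S)$ by a diagonal approximation with $t_n \to 0$, whereas the paper handles $x \in \mathrm{Adh}(\S)$ in one step by approximating $x$ by some $x' \in \S$ within a tolerance $\varepsilon\frac{1-t}{t}$ depending on the segment parameter; both versions are complete.
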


\begin{proof}
Let us prove that $\mathrm{Adh}(\S) \subset \mathrm{Adh}( \mathrm{Int}(\S))$. Let $x \in \mathrm{Adh}(\S)$ and let $y \in \mathrm{Int}(\S)$ such that $y \neq x$. There exists $\varepsilon > 0$ such that $y +\B_X (0,\varepsilon) \subset \S$. Let us prove that the line segment $(x,y) \subset \mathrm{Int}(\S)$. Let $w := t x + (1-t) y$ for some $t \in (0,1)$. Since $x \in \mathrm{Adh}(\S)$, there exists $x' \in \S$ such that $\Vert x-x' \Vert < \varepsilon \frac{1-t}{t}$. Let us denote by $z := x-x'$. Finally, we have $w = t x' + (1-t) y + t  z = t x' + (1-t) (y+\xi)$ where $\xi := \frac{t}{1-t}z \in \B_X (0,\varepsilon)$. As a consequence, since $x' \in \S$, since $y + \B_X (0,\varepsilon) \subset \S$ and since $\S$ is convex, $w \in t x' + (1-t) (y +  \B_X (0,\varepsilon) ) \subset \S$, that is, $w$ belongs to an open subset included in $\S$, \textit{i.e.} $w \in \mathrm{Int}(\S)$. Finally, we have proved that $(x,y) \subset \mathrm{Int}(\S)$ and thus $x \in \mathrm{Adh}(\mathrm{Int}(\S))$.
\end{proof}

Let $\varphi : X \to \R$ be a convex function and $x \in X$. Recall that the subdifferential of $\varphi$ at $x$ is defined by
$$ \partial \varphi (x) := \{ f \in X^* \; \mid \; \langle f , y - x \rangle \leq \varphi (y) - \varphi (x), \; \forall y \in X \}.$$
Note that $\partial \varphi (x)$ is a weakly* closed convex subset of $X^*$.

\begin{lemma}\label{lem95476}
If $\varphi : X \to \R$ is continuous, then $\partial \varphi (x) \neq \emptyset$ for every $x \in X$.
\end{lemma}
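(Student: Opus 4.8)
The plan is to pass to the epigraph of $\varphi$ and apply a geometric form of the Hahn-Banach theorem, following the classical argument that a continuous convex function on a normed space is subdifferentiable. Work in the product normed space $Y := X \times \R$, whose dual space identifies with $X^* \times \R$ through the pairing $\langle (f,\beta), (y,t) \rangle := \langle f , y \rangle + \beta t$. Consider the epigraph $C := \{ (y,t) \in Y \; \mid \; \varphi (y) \leq t \}$, which is a convex subset of $Y$ because $\varphi$ is convex.

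First I would check that $\mathrm{Int}(C) \neq \emptyset$, which is the one place where the continuity of $\varphi$ is used. Precisely, if $\varphi(y) < t$ and $r := t - \varphi(y) > 0$, continuity of $\varphi$ at $y$ provides $\rho > 0$ such that $\vert \varphi(y') - \varphi(y) \vert < r/2$ whenever $\Vert y' - y \Vert < \rho$; then every $(y',t') \in Y$ with $\Vert (y',t') - (y,t) \Vert < \min(\rho, r/2)$ satisfies $\varphi(y') < \varphi(y) + r/2 < t'$, hence $(y',t') \in C$. In particular $(x, \varphi(x)+1) \in \mathrm{Int}(C)$, while $p_0 := (x,\varphi(x))$ belongs to $C$ but not to $\mathrm{Int}(C)$, since $(x,\varphi(x)-\varepsilon) \notin C$ for every $\varepsilon > 0$. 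Thus $\mathrm{Int}(C)$ is a nonempty open convex set disjoint from the convex set $\{ p_0 \}$, and the first geometric form of the Hahn-Banach theorem (see \cite[Theorem 1.6 p.6]{brez}) yields a nonzero $(f,\beta) \in X^* \times \R$ and $\gamma \in \R$ with $\langle f , y \rangle + \beta t \geq \gamma$ for all $(y,t) \in \mathrm{Int}(C)$ and $\langle f , x \rangle + \beta \varphi(x) \leq \gamma$.

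Next I would turn this into a supporting hyperplane of $C$ at $p_0$. Since $C$ is convex with nonempty interior, Lemma~\ref{lemadhintconvex} gives $\mathrm{Adh}(\mathrm{Int}(C)) = \mathrm{Adh}(C)$, so the continuous affine map $(y,t) \mapsto \langle f , y \rangle + \beta t$ satisfies $\langle f , y \rangle + \beta t \geq \gamma$ on all of $C$; evaluating at $p_0 \in C$ forces $\langle f , x \rangle + \beta\varphi(x) = \gamma$, whence
$$ \langle f , y - x \rangle + \beta ( t - \varphi(x) ) \geq 0 \qquad \text{for every } (y,t) \in C. $$
Taking $y = x$ and $t = \varphi(x)+s$ with $s \geq 0$ gives $\beta \geq 0$, and the crucial point is to exclude $\beta = 0$: if $\beta = 0$, then choosing $t = \varphi(y)$ shows $\langle f , y - x \rangle \geq 0$ for every $y \in X$, hence $f = 0$, contradicting $(f,\beta) \neq 0$. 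Therefore $\beta > 0$.

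Finally, set $g := - f/\beta \in X^*$. Evaluating the displayed inequality at $t = \varphi(y)$ gives $\beta(\varphi(y) - \varphi(x)) \geq - \langle f , y - x \rangle$, that is, $\varphi(y) - \varphi(x) \geq \langle g , y - x \rangle$ for every $y \in X$, which is exactly the defining inequality of $g \in \partial \varphi(x)$. Hence $\partial\varphi(x) \neq \emptyset$. I expect the only genuinely delicate steps to be the verification that $\mathrm{Int}(C) \neq \emptyset$ (the sole use of continuity) and the non-verticality argument $\beta \neq 0$; everything else is routine.
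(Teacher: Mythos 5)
Your proof is correct and follows essentially the same route as the paper: separate the point $(x,\varphi(x))$ from the nonempty open convex interior of the epigraph via the geometric Hahn--Banach theorem, extend the inequality to the whole epigraph using Lemma~\ref{lemadhintconvex}, and rescale by the vertical coefficient. The only cosmetic differences are that you establish $\beta>0$ by first getting $\beta\geq 0$ and then excluding $\beta=0$ (the paper gets $c>0$ directly by plugging in the interior point $(x,\varphi(x)+1)$), and your direct verification that the epigraph has nonempty interior via continuity is actually cleaner than the paper's footnote.
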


\begin{proof}
Recall that the epigraph of $\varphi$ is defined by
$$ \mathrm{Epi}_\varphi := \{ (y,\lambda) \in X \times \R \; \mid \; \varphi (y) \leq \lambda  \} \subset X \times \R. $$
Since $\varphi$ is convex and continuous on $X$, $\mathrm{Epi}_\varphi$ is clearly a nonempty closed convex subset of $X \times \R$. Let $x \in X$. It is clear that $(x,\varphi (x)) \notin \mathrm{Int}(\mathrm{Epi}_\varphi)$ (since $(x,\varphi (x)-\varepsilon) \notin \mathrm{Epi}_\varphi$ for all $\varepsilon > 0$) and that $\mathrm{Int}(\mathrm{Epi}_\varphi)$ is a nonempty\footnote{$\mathrm{Int}(\mathrm{Epi}_\varphi)$ is nonempty since, from Lemma~\ref{lemadhintconvex}, $\mathrm{Adh}(\mathrm{Int} (\mathrm{Epi}_\varphi)) = \mathrm{Adh}(\mathrm{Epi}_\varphi) = \mathrm{Epi}_\varphi$ is nonempty.} open convex\footnote{One can easily prove that the interior of a convex set is a convex set.} subset of $X \times \R$. From the classical Hahn-Banach theorem (see \cite[Lemma 1.3 p.6]{brez}), there exists $(f,c) \in (X \times \R)^* = X^* \times \R$ such that $\langle (f,c) , (x,\varphi(x)) \rangle < \langle (f,c) , (w,r) \rangle$, \textit{i.e.} $\langle f , w-x \rangle +c (r - \varphi (x)) > 0$ for all $(w,r) \in \mathrm{Int} (\mathrm{Epi}_\varphi)$. From the continuity of $\varphi$ at $x$, it follows that $(x,\varphi (x) + 1) \in \mathrm{Int} (\mathrm{Epi}_\varphi)$. Thus, taking $(w,r) = (x, \varphi (x)+1)$, we obtain that $c > 0$. From Lemma~\ref{lemadhintconvex}, we also obtain that $\langle f , w-x \rangle +c (r - \varphi (x)) \geq 0$ for all $(w,r) \in \mathrm{Epi}_\varphi$. Finally, taking $r = \varphi (w)$, we conclude that $ \langle - \frac{1}{c} f , w-x \rangle \leq \varphi (w) - \varphi (x)$ for all $w \in X$, \textit{i.e.} $-\frac{1}{c} f \in \partial \varphi (x)$. The proof is complete.
\end{proof}

\begin{lemma}\label{lem54891}
If $\varphi : X \to \R$ is $L$-Lipschitz continuous on $X$ for some $L \geq 0$, then $\partial \varphi (x) \neq \emptyset$ and $\partial \varphi (x) \subset \overline{\B}_{X^*} (0,L)$ for every $x \in X$.
\end{lemma}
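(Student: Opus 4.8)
The plan is to split the statement into its two assertions and dispatch each quickly, leaning on the preceding Lemma~\ref{lem95476}. For nonemptiness of $\partial \varphi(x)$, the key observation is that an $L$-Lipschitz continuous function is in particular continuous, so Lemma~\ref{lem95476} applies verbatim and gives $\partial \varphi(x) \neq \emptyset$ for every $x \in X$. There is essentially nothing to do here beyond citing the previous result.

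For the inclusion $\partial \varphi(x) \subset \overline{\B}_{X^*}(0,L)$, I would take an arbitrary $f \in \partial \varphi(x)$ and use the defining subgradient inequality
$$ \langle f , y - x \rangle \leq \varphi(y) - \varphi(x) $$
for all $y \in X$, combined with the Lipschitz bound $\varphi(y) - \varphi(x) \leq L \Vert y - x \Vert$. Specializing to $y = x + v$ with $\Vert v \Vert \leq 1$ yields $\langle f , v \rangle \leq L$; since this holds for $v$ and $-v$ alike, we get $\vert \langle f , v \rangle \vert \leq L$ for all $v$ in the closed unit ball of $X$, and taking the supremum over such $v$ gives $\Vert f \Vert_* \leq L$, i.e. $f \in \overline{\B}_{X^*}(0,L)$.

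I do not anticipate any real obstacle: the first half is an immediate corollary of Lemma~\ref{lem95476}, and the second half is a one-line computation from the definition of the subdifferential together with the Lipschitz estimate. The only minor point to keep in mind is that $\Vert f \Vert_*$ is defined as the supremum of $\vert \langle f , v \rangle \vert$ over the unit ball, so one should explicitly note the sign-flip $v \mapsto -v$ to pass from the one-sided bound $\langle f , v \rangle \leq L$ to the absolute-value bound; this is the kind of detail the paper has been careful about elsewhere.
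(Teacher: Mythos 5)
Your proposal is correct and follows essentially the same route as the paper: nonemptiness is obtained by citing Lemma~\ref{lem95476} (Lipschitz implies continuous), and the bound $\Vert f \Vert_* \leq L$ comes from testing the subgradient inequality at $y = x \pm v$ and using the Lipschitz estimate, which is exactly the paper's $\pm z$ computation. No gaps.
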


\begin{proof}
Let $x \in X$ and $f \in \partial \varphi (x) \neq \emptyset$ (see Lemma~\ref{lem95476}). One can easily obtain that $\vert \langle f , z \rangle \vert = \langle f , \pm z \rangle = \langle f , \pm z + x - x \rangle \leq \varphi (\pm z+x) - \varphi (x) \leq \vert \varphi (\pm z+x) - \varphi (x) \vert \leq  L \Vert \pm z + x - x \Vert = L \Vert z \Vert $ for every $z \in X$. Thus $\Vert f \Vert_* \leq L$.
\end{proof}

\begin{lemma}
If $\varphi : X \to \R$ is G\^ateaux-differentiable at some $x \in X$, then $\partial \varphi (x) = \{ D\varphi (x) \}$.
\end{lemma}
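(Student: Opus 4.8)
The plan is to establish the two inclusions $\{D\varphi(x)\}\subset\partial\varphi(x)$ and $\partial\varphi(x)\subset\{D\varphi(x)\}$ separately. Only the first one uses the convexity of $\varphi$; the second uses merely G\^ateaux-differentiability and the definition of the subdifferential.

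First I would show that $D\varphi(x)\in\partial\varphi(x)$. Fix $y\in X$ and consider the one-variable function $\phi(t):=\varphi(x+t(y-x))$ for $t\in[0,1]$. Since $\varphi$ is convex, $\phi$ is convex on $[0,1]$, and hence the difference quotient $t\mapsto\dfrac{\phi(t)-\phi(0)}{t}$ is nondecreasing on $(0,1]$. Letting $t\to0^+$, the left-hand side converges to $\langle D\varphi(x),y-x\rangle$ by G\^ateaux-differentiability of $\varphi$ at $x$, and by monotonicity this limit is bounded above by the value of the quotient at $t=1$, namely $\phi(1)-\phi(0)=\varphi(y)-\varphi(x)$. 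Therefore $\langle D\varphi(x),y-x\rangle\leq\varphi(y)-\varphi(x)$ for every $y\in X$, which is exactly the statement $D\varphi(x)\in\partial\varphi(x)$.

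Then I would prove uniqueness. Let $f\in\partial\varphi(x)$. For any $v\in X$ and any $t>0$, taking $y=x+tv$ in the subgradient inequality gives $t\langle f,v\rangle\leq\varphi(x+tv)-\varphi(x)$; dividing by $t$ and letting $t\to0^+$ yields $\langle f,v\rangle\leq\langle D\varphi(x),v\rangle$. Applying this with $-v$ in place of $v$ produces the reverse inequality, so $\langle f,v\rangle=\langle D\varphi(x),v\rangle$ for all $v\in X$, i.e. $f=D\varphi(x)$. Combining the two steps gives $\partial\varphi(x)=\{D\varphi(x)\}$. The argument is entirely routine; the only point requiring a little care is the monotonicity of the one-dimensional difference quotients of a convex function together with the fact that G\^ateaux-differentiability guarantees that the relevant one-sided directional limit exists and equals $\langle D\varphi(x),\cdot\rangle$.
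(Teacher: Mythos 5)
Your proof is correct and follows essentially the same route as the paper: convexity gives the subgradient inequality for $D\varphi(x)$ via the monotone difference quotient, and uniqueness follows by comparing $\langle f,v\rangle$ with the directional derivative in the directions $\pm v$. The paper phrases the first step directly through the convexity inequality $\varphi(x+\lambda(y-x))\leq\varphi(x)+\lambda(\varphi(y)-\varphi(x))$ rather than through monotonicity of the quotient, but this is the same argument.
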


\begin{proof}
Firstly, let us prove that $D\varphi (x) \in \partial \varphi (x)$. From the convexity of $\varphi$, it holds that $\varphi (x+\lambda (y-x)) \leq \varphi (x) + \lambda (\varphi (y) - \varphi (x))$, \textit{i.e.} $\frac{\varphi (x+\lambda (y-x)) - \varphi (x)}{\lambda} \leq \varphi (y) - \varphi (x)$ for any $y \in X$ and any $\lambda \in (0,1)$. Passing to the limit $\lambda \to 0^+$, we obtain that $\langle D\varphi (x) , y- x \rangle \leq \varphi(y) - \varphi (x)$ for every $y \in X$. Thus $D\varphi (x) \in \partial \varphi (x)$. Secondly, let us prove that $\partial \varphi (x) \subset \{ D\varphi (x) \}$. Let $f \in \partial \varphi (x)$. It holds that $\langle f , \lambda (y-x) \rangle \leq \varphi (x + \lambda (y-x)) - \varphi (x)$, \textit{i.e.} $\langle f , y-x \rangle \leq \frac{\varphi (x + \lambda (y-x)) - \varphi (x)}{\lambda}$ for every $y \in X$ and $\lambda \in (0,1)$. Passing to the limit $ \lambda \to 0^+$, we get that $\langle f , y-x \rangle \leq \langle D\varphi(x) ,y -x \rangle$ for every $y \in X$. From linearity, we conclude that $\langle f , y \rangle = \langle D\varphi (x) ,y \rangle$ for all $y \in X$, \textit{i.e.} $f = D\varphi (x)$.
\end{proof}

\begin{lemma}\label{lemmordu1}
If $\varphi : X \to \R$ is Lipschitz continuous around $x \in X$ and if $\partial \varphi (x)$ is reduced to a singleton $\{ f \}$, then $\varphi$ is strictly Hadamard-differentiable at $x$ with $D\varphi (x) = f$.
\end{lemma}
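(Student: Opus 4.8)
The plan is to deduce strict Hadamard-differentiability from an equality of one-sided directional derivatives, using in an essential way that $\varphi$ is convex as well as Lipschitz. Fix a neighbourhood of $x$ on which $\varphi$ is $L$-Lipschitz. Since $\varphi$ is convex and locally Lipschitz, strict Hadamard-differentiability at $x$ with derivative $f$ amounts to the pointwise statement
$$ \lim_{\substack{y \to x \\ t \to 0^+}} \frac{\varphi(y+td) - \varphi(y)}{t} = \langle f , d \rangle \qquad \text{for every } d \in X, $$
the uniformity over compact sets of directions being automatic because the difference quotients $d \mapsto \frac{\varphi(y+td)-\varphi(y)}{t}$ are $L$-Lipschitz in $d$ for $y$ close to $x$ (and $\Vert f \Vert_* \leq L$ by Lemma~\ref{lem54891}). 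Introducing the upper and lower directional derivatives $\varphi^{\circ}(x;d) := \limsup_{y \to x,\, t \to 0^+} \frac{\varphi(y+td)-\varphi(y)}{t}$ and $\varphi_{\circ}(x;d) := \liminf_{y \to x,\, t \to 0^+} \frac{\varphi(y+td)-\varphi(y)}{t}$, and observing the elementary identity $\varphi_{\circ}(x;d) = -\varphi^{\circ}(x;-d)$ obtained by the change of base point $y \mapsto y - td$, it is enough to prove $\varphi^{\circ}(x;d) = \langle f, d \rangle$ for every $d$.

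First I would treat the ordinary directional derivative. Convexity makes $s \mapsto \frac{\varphi(y+sd)-\varphi(y)}{s}$ nondecreasing on $(0,+\infty)$, so $\varphi'(x;d) := \lim_{s \to 0^+} \frac{\varphi(x+sd)-\varphi(x)}{s} = \inf_{s > 0} \frac{\varphi(x+sd)-\varphi(x)}{s}$ exists, the map $d \mapsto \varphi'(x;d)$ is sublinear, and $\varphi'(x;d) \leq L \Vert d \Vert$. Because $f \in \partial \varphi(x)$ one has $\langle f, td \rangle \leq \varphi(x+td)-\varphi(x)$ for all $t > 0$, hence $\langle f, d \rangle \leq \varphi'(x;d)$. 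For the reverse inequality I would argue by contradiction: if $\varphi'(x;d) > \langle f, d \rangle$, then the linear form $t d \mapsto t\,\varphi'(x;d)$ on the line $\R d$ is dominated there by the sublinear map $\varphi'(x;\cdot)$ — on the negative half-line this uses $\varphi'(x;d) + \varphi'(x;-d) \geq \varphi'(x;0) = 0$, which follows from subadditivity — so the classical Hahn--Banach theorem yields $\tilde{\ell} \in X^*$ with $\tilde{\ell} \leq \varphi'(x;\cdot)$ on $X$; then $\tilde{\ell}(y-x) \leq \varphi'(x;y-x) \leq \varphi(y)-\varphi(x)$ shows $\tilde{\ell} \in \partial \varphi(x)$, while $\tilde{\ell}(d) = \varphi'(x;d) \neq \langle f, d \rangle$ contradicts $\partial \varphi(x) = \{ f \}$. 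Hence $\varphi'(x;d) = \langle f, d \rangle$; this is the one and only place where the singleton hypothesis enters.

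Next I would prove the regularity identity $\varphi^{\circ}(x;d) = \varphi'(x;d)$. Taking $y = x$ in the limsup gives $\varphi^{\circ}(x;d) \geq \varphi'(x;d)$ at once. For the reverse, fix $\varepsilon > 0$, choose $s > 0$ small enough that $\frac{\varphi(x+sd)-\varphi(x)}{s} < \varphi'(x;d) + \varepsilon/2$, and use continuity of $\varphi$ to find $\delta \in (0,s]$ with $\frac{\varphi(y+sd)-\varphi(y)}{s} < \varphi'(x;d) + \varepsilon$ whenever $\Vert y - x \Vert < \delta$; the monotonicity of the difference quotient then gives $\frac{\varphi(y+td)-\varphi(y)}{t} \leq \frac{\varphi(y+sd)-\varphi(y)}{s} < \varphi'(x;d)+\varepsilon$ for all $\Vert y-x \Vert < \delta$ and $0 < t \leq \delta$, so $\varphi^{\circ}(x;d) \leq \varphi'(x;d)+\varepsilon$; letting $\varepsilon \to 0$ concludes. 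Assembling the pieces: $\varphi^{\circ}(x;d) = \varphi'(x;d) = \langle f, d \rangle$ for every $d$, hence also $\varphi_{\circ}(x;d) = -\varphi^{\circ}(x;-d) = \langle f, d \rangle$, so the two-sided limit displayed above exists and equals $\langle f, d \rangle$; the uniform-on-compacts refinement then follows from the equi-Lipschitz bound noted at the start, and $\varphi$ is strictly Hadamard-differentiable at $x$ with $D\varphi(x) = f$.

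I expect the delicate step to be the regularity identity $\varphi^{\circ}(x;d) = \varphi'(x;d)$, since it requires controlling difference quotients at a \emph{moving} base point $y$, not at $x$ itself; the trick is to dominate them all by a single quotient $\frac{\varphi(y+sd)-\varphi(y)}{s}$ at one fixed small step $s$ and then let $y \to x$ using only continuity. The Hahn--Banach construction of a spurious subgradient is the conceptual core where convexity together with the singleton hypothesis is genuinely used; the remaining estimates and the passage from directionwise convergence to strict Hadamard-differentiability are routine for a locally Lipschitz function.
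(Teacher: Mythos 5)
Your proof is correct, but it is genuinely different from what the paper does: the paper gives no argument at all for this lemma and simply cites Mordukhovich's Theorem 3.54, which is a general result on strict Hadamard differentiability of locally Lipschitz (not necessarily convex) functions. You instead exploit the fact that in this paper $\partial\varphi$ is the \emph{convex} subdifferential (so $\varphi$ is implicitly convex, as it is in the only application, to $d_{\S}$), and you assemble three standard convex-analytic facts: (i) the support-function identity $\varphi'(x;\cdot)=\sup_{f\in\partial\varphi(x)}\langle f,\cdot\rangle$, which you derive via Hahn--Banach and which is the only place the singleton hypothesis is used; (ii) Clarke regularity of convex functions, $\varphi^{\circ}(x;d)=\varphi'(x;d)$, obtained by dominating the moving-base-point quotients by a single quotient at a fixed step $s$ and using monotonicity; and (iii) the passage from pointwise to uniform-on-compacts convergence via the equi-Lipschitz bound on the difference quotients, together with the symmetry $\varphi_{\circ}(x;d)=-\varphi^{\circ}(x;-d)$ to cover $t\to 0^-$. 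The one point worth making explicit is that the Hahn--Banach extension $\tilde{\ell}$ is continuous (hence genuinely an element of $X^{*}$ and of $\partial\varphi(x)$ as defined); this follows from $\pm\tilde{\ell}(d)\le\varphi'(x;\pm d)\le L\Vert d\Vert$. Your route buys a self-contained, elementary proof at the cost of being restricted to convex $\varphi$, which is all the paper needs; the cited theorem covers the nonconvex Lipschitz case but requires the machinery of generalized differentiation.
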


\begin{proof}
See \cite[Theorem 3.54 p.313]{mordu1}.
\end{proof}

\begin{lemma}\label{lem12384}
Let $\S \subset X$ be a nonempty subset and let $d_\S : X \to \R$ be the distance function to $\S$. The following properties hold:
\begin{enumerate}
\item $d_\S$ is $1$-Lipschitz continuous;
\item If $\S$ is convex, $d_\S$ is convex;
\item If $\S$ is closed and convex, $\partial d_\S (x) \neq \emptyset$ and $\partial d_\S (x) \subset \mathrm{Sph}_{X^*} (0,1)$ for every $x \in X \setminus \S$.
\end{enumerate}
\end{lemma}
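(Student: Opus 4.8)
The plan is to treat the three items separately. Items~1 and~2 are immediate from the triangle inequality and the convexity of the norm, respectively, while item~3 will follow by combining them with Lemma~\ref{lem54891} and one additional lower-bound estimate. For item~1, fix $x,y \in X$; for every $z \in \S$ one has $d_\S(x) \le \Vert x-z\Vert \le \Vert x-y\Vert + \Vert y-z\Vert$, so taking the infimum over $z \in \S$ gives $d_\S(x) \le \Vert x-y\Vert + d_\S(y)$, and exchanging the roles of $x$ and $y$ yields $\vert d_\S(x) - d_\S(y)\vert \le \Vert x-y\Vert$. For item~2, assume $\S$ convex and fix $x,y \in X$ and $\lambda \in [0,1]$; for any $z_1,z_2 \in \S$ we have $\lambda z_1 + (1-\lambda)z_2 \in \S$, hence $d_\S(\lambda x + (1-\lambda)y) \le \Vert \lambda(x-z_1) + (1-\lambda)(y-z_2)\Vert \le \lambda\Vert x-z_1\Vert + (1-\lambda)\Vert y-z_2\Vert$, and taking the infimum first over $z_1 \in \S$ and then over $z_2 \in \S$ gives $d_\S(\lambda x + (1-\lambda)y) \le \lambda d_\S(x) + (1-\lambda) d_\S(y)$.

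For item~3, assume $\S$ closed and convex and fix $x \in X \setminus \S$. By items~1 and~2 the function $d_\S$ is convex and $1$-Lipschitz continuous on $X$, so Lemma~\ref{lem54891} already gives $\partial d_\S(x) \neq \emptyset$ and $\partial d_\S(x) \subset \overline{\B}_{X^*}(0,1)$; it therefore remains only to prove that $\Vert f \Vert_* \geq 1$ for every $f \in \partial d_\S(x)$. Since $\S$ is closed and $x \notin \S$, the quantity $r := d_\S(x)$ is strictly positive. Pick a minimizing sequence $(z_n) \subset \S$ with $\Vert x - z_n \Vert \to r$. Applying the subgradient inequality $\langle f , y - x \rangle \leq d_\S(y) - d_\S(x)$ with $y = z_n$ and using $d_\S(z_n) = 0$ gives $\langle f , x - z_n \rangle \geq r > 0$; in particular $x \neq z_n$, whence $\Vert f \Vert_* \geq \langle f , x - z_n \rangle / \Vert x - z_n \Vert \geq r / \Vert x - z_n \Vert \to 1$. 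Thus $\Vert f \Vert_* = 1$, that is, $\partial d_\S(x) \subset \mathrm{Sph}_{X^*}(0,1)$.

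The only genuinely delicate point is this last step: since $X$ is not assumed reflexive there need not be a metric projection of $x$ onto $\S$, so the lower bound $\Vert f \Vert_* \geq 1$ cannot be read off at a nearest point and must instead be obtained by running the estimate along a minimizing sequence, using that $\Vert x - z_n \Vert \geq r$ for all $n$ and $\Vert x - z_n \Vert \to r$.
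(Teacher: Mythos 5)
Your proof is correct and follows essentially the same route as the paper: items 1 and 2 via the triangle inequality and convexity of the norm (infimum versus minimizing sequences is immaterial), and item 3 by invoking Lemma~\ref{lem54891} for nonemptiness and the upper bound, then running the subgradient inequality along a minimizing sequence $(z_n)\subset\S$ to get $\Vert f\Vert_*\geq d_\S(x)/\Vert x-z_n\Vert\to 1$. Your closing remark about the absence of a nearest point in the non-reflexive setting is exactly why the paper also argues along a minimizing sequence.
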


\begin{proof}
$1.$ Let $x$, $y \in \S$. Let $(y_n) \subset \S$ be a sequence such that $\Vert y - y_n \Vert \to d_\S (y)$. Then it holds that $d_\S(x) \leq \Vert x-y_n \Vert \leq \Vert x-y \Vert + \Vert y - y_n \Vert \to \Vert x-y \Vert + d_\S (y)$. In a very similar way we obtain that $d_\S (y) \leq \Vert x-y \Vert +d_\S (x)$. We conclude that $\vert d_\S (y) - d_\S (x) \vert \leq \Vert y-x \Vert$. \\

$2.$ Let $x$, $y \in \S$ and $\lambda \in [0,1]$. Let $(x_n)$, $(y_n) \subset \S$ such that $\Vert x - x_n \Vert \to d_\S (x)$ and $\Vert y - y_n \Vert \to d_\S (y)$. Note that $(1-\lambda)x_n + \lambda y_n \in \S$ since $\S$ is convex. Then, $d_\S ((1-\lambda)x + \lambda y ) \leq \Vert [(1-\lambda)x+\lambda y] - [(1-\lambda)x_n+\lambda y_n ] \Vert \leq (1-\lambda) \Vert x - x_n \Vert + \lambda \Vert y- y_n \Vert \to (1-\lambda ) d_\S (x) + \lambda d_\S (y)$. Thus, $d_\S$ is convex. \\

$3.$ Let $x \in X \setminus \S$ and $f \in \partial d_\S (x) \neq \emptyset$ (see Lemma~\ref{lem54891}). From Lemma~\ref{lem54891}, we already know that $\Vert f \Vert_* \leq 1$. Since $\S$ is closed, note that $d_\S (x) > 0$. There exists $(x_n ) \subset \S$ such that $\Vert x - x_n \Vert \to d_\S (x) $. Moreover, it holds that $\langle f , x_n - x \rangle \leq d_\S (x_n) - d_\S (x) = - d_\S(x)$. Thus, $d_\S (x) \leq \langle f , x-x_n \rangle \leq \Vert f \Vert_* \Vert x - x_n \Vert \to \Vert f \Vert_* d_\S (x)$. Thus, $\Vert f \Vert_* \geq 1$. We conclude that $\partial d_\S (x) \subset \mathrm{Sph}_{X^*} (0,1)$.
\end{proof}

\begin{remark}\label{remarkdistance}
Since $d_\S$ is $1$-Lipschitz continuous on $X$, one can easily prove that $d_\S^2$ is Fr\'echet-differentiable on $\S$, with $Dd^2_\S (x) = 0$ for every $x \in \S$.
\end{remark}

\begin{proposition}\label{propdistance}
Let us assume that $\Vert \cdot \Vert_*$ is strictly convex on $X^*$. Let $\S \subset X$ be a nonempty closed and convex subset. Then, $d_\S$ is strictly Hadamard-differentiable on $X \setminus \S$ with $ \Vert Dd_\S (x) \Vert_* = 1$ for every $x \in X \setminus \S$.
\end{proposition}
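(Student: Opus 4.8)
\textit{Proof plan.} The strategy is to reduce everything to Lemma~\ref{lemmordu1}: it suffices to show that, for each $x \in X \setminus \S$, the subdifferential $\partial d_\S(x)$ is reduced to a singleton, and then to identify the norm of that singleton using Lemma~\ref{lem12384}. First I would recall that $d_\S$ is $1$-Lipschitz continuous on $X$ (item~1 of Lemma~\ref{lem12384}), hence in particular Lipschitz continuous around every point, so the first hypothesis of Lemma~\ref{lemmordu1} is satisfied at any $x \in X$. Moreover, since $\S$ is convex, $d_\S$ is convex (item~2 of Lemma~\ref{lem12384}), so its convex subdifferential is the relevant object.

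Next, I would fix $x \in X \setminus \S$ and invoke item~3 of Lemma~\ref{lem12384}: $\partial d_\S(x)$ is a nonempty subset of the unit sphere $\mathrm{Sph}_{X^*}(0,1)$. I would also use the remark recorded right after the definition of the subdifferential, namely that $\partial d_\S(x)$ is a convex subset of $X^*$. This is the small point that makes the argument work, and it is really the only thing to be careful about.

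The key step is then the elementary observation that a nonempty convex subset of the unit sphere of a strictly convex norm must be a singleton. Concretely, if $f, g \in \partial d_\S(x)$ with $f \neq g$, then $\Vert f \Vert_* = \Vert g \Vert_* = 1$ while $\tfrac{1}{2}(f+g) \in \partial d_\S(x) \subset \mathrm{Sph}_{X^*}(0,1)$, so $\Vert \tfrac{1}{2}(f+g) \Vert_* = 1 = \tfrac{1}{2}\Vert f \Vert_* + \tfrac{1}{2}\Vert g \Vert_*$; strict convexity of $\Vert \cdot \Vert_*$ then forces $f = \mu g$ for some $\mu \geq 0$, and taking norms gives $\mu = 1$, i.e. $f = g$, a contradiction. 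Hence $\partial d_\S(x) = \{ f \}$ for a single $f \in X^*$ with $\Vert f \Vert_* = 1$.

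Finally, Lemma~\ref{lemmordu1} applies and yields that $d_\S$ is strictly Hadamard-differentiable at $x$ with $Dd_\S(x) = f$, whence $\Vert Dd_\S(x) \Vert_* = \Vert f \Vert_* = 1$. Since $x \in X \setminus \S$ was arbitrary, this proves the proposition. I do not anticipate any genuine obstacle: the proof is purely a synthesis of the preceding lemmas, the only delicate point being to notice that convexity of the subdifferential lets the strict-convexity midpoint argument close the case.
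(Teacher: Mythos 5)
Your proposal is correct and follows exactly the paper's argument: by Lemma~\ref{lem12384} the subdifferential $\partial d_\S(x)$ is a nonempty convex subset of $\mathrm{Sph}_{X^*}(0,1)$, strict convexity of $\Vert\cdot\Vert_*$ forces it to be a singleton of norm one, and Lemma~\ref{lemmordu1} then gives strict Hadamard-differentiability. The midpoint argument you spell out is precisely the step the paper leaves implicit.
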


\begin{proof}
Let $x \in X \setminus \S$. From Lemma~\ref{lem12384}, it follows that $\partial d_\S (x) \neq \emptyset$ and $\partial d_\S (x) \subset \mathrm{Sph}_{X^*} (0,1)$. Since $\partial d_\S (x) \neq \emptyset$ is a convex subset included in $\mathrm{Sph}_{X^*} (0,1)$ and since $\Vert \cdot \Vert_*$ is strictly convex, it follows that $\partial d_\S (x)$ is necessarily reduced to a singleton $\{ f \}$ satisfying $\Vert f \Vert_* = 1$. We conclude from Lemma~\ref{lemmordu1}.
\end{proof}

\section{Functions of bounded variations and Stieltjes integrals}\label{annexeBV}

For this section, we essentially refer to \cite{bach,burk,conw,fara,lima,whee}.

\subsection{Some recalls}\label{annexeBV1}
Recall that a function $\eta : [0,T] \to \R$ is said to be \textit{of bounded variations} if
$$ \V (\eta) := \sup_{(t_k)_k} \Big\{ \sum_{k} \vert \eta (t_{k+1}) - \eta (t_k) \vert \Big\} < +\infty  $$
where the supremum is taken over all partitions $(t_k)_k$ of $[0,T]$. In that case, we denote by $\eta \in \BV_1$. \\

Recall that $\eta : [0,T] \to \R$ belongs to $\BV_1$ if and only if $\eta$ is equal to the difference of two monotically increasing functions. In particular, if $\eta \in \BV_1$, then $\eta$ admits a left-limit denoted by $\eta (t^-)$ (resp. a right-limit denoted by $\eta(t^+)$) at every point $t \in (0,T]$ (resp. $t \in [0,T)$) and the set of discontinuity points of $\eta$ is at most countable. \\ 

Recall that the application $\Vert \cdot \Vert_{\BV_1}$ given by
$$ \fonction{\Vert \cdot \Vert_{\BV_1}}{\BV_1}{\R^+}{\eta}{\eta (0) + \V (\eta)} $$
defines a norm on $\BV_1$. Moreover, recall that $(\BV_1,\Vert \cdot \Vert_{\BV_1})$ is a Banach space. \\

Let $\eta \in \BV_1$. For every $t \in [0,T]$, $\vert \eta (t) \vert \leq \vert \eta(0) \vert + \vert \eta (t) -\eta(0)\vert \leq  \vert \eta(0) \vert + \vert \eta (t) -\eta(0)\vert + \vert \eta (T) -\eta(t) \vert \leq \vert \eta (0) \vert + \V(\eta) \leq \Vert \eta \Vert_{\BV_1}$. As a consequence, $\eta \in \BF_1$ with $\Vert \eta \Vert_\infty \leq \Vert \eta \Vert_{\BV_1}$. \\

Recall that if $z \in \C_1$ and $\eta \in \BV_1$, then the classical Riemann-Stieltjes integral defined by
$$ \int_0^T z(\t) \; d\eta(\t) := \lim \sum_k z(t_k) (\eta(t_{k+1})-\eta(t_k) ) $$ 
exists. In the above equality, the limit means that the length of the partition $(t_k)_k$ tends to zero. \\

Let us recall the following classical Riesz theorem.

\begin{proposition}[Riesz theorem]\label{propriesz}
Let $\varphi \in \C^*_1$. There exists a unique $\eta \in \NBV_1$ such that
$$ \langle \varphi , z \rangle_{\C^*_1 \times \C_1} = \int_0^T z(\t) \; d\eta (\t), $$
for every $z \in \C_1$. Moreover, if $\langle \varphi , z \rangle_{\C^*_1 \times \C_1} \geq 0$ for every $z \in \C^+_1$, then $\eta$ is monotically increasing.
\end{proposition}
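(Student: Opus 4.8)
The plan is to establish existence by the classical Hahn--Banach extension trick, then to \emph{normalize} the resulting integrator, and finally to read off uniqueness and the positivity claim from one and the same Riemann--Stieltjes computation. First I would view $\C_1=\C([0,T],\R)$ as a linear subspace of the space $\BF_1$ of bounded functions, both equipped with $\Vert\cdot\Vert_\infty$, and use the norm-preserving Hahn--Banach theorem to extend $\varphi$ to a functional $\Phi\in\BF_1^*$ with $\Phi|_{\C_1}=\varphi$ and $\Vert\Phi\Vert=\Vert\varphi\Vert$. Then I would introduce the candidate integrator $\eta_0:[0,T]\to\R$ given by $\eta_0(0):=0$, $\eta_0(t):=\Phi(\mathbf 1_{[0,t)})$ for $t\in(0,T)$, and $\eta_0(T):=\Phi(\mathbf 1_{[0,T]})=\varphi(1)$, where the half-open cells $[t_k,t_{k+1})$ together with a closed last cell $[t_{N-1},T]$ are chosen precisely so that no spurious endpoint-mass term appears below.

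Two estimates carry the construction. For the bounded-variation bound, for a partition $0=t_0<\cdots<t_N=T$ one uses the increment identities $\eta_0(t_{k+1})-\eta_0(t_k)=\Phi(\mathbf 1_{[t_k,t_{k+1})})$ for $0\le k\le N-2$ and $\eta_0(T)-\eta_0(t_{N-1})=\Phi(\mathbf 1_{[t_{N-1},T]})$; multiplying the $k$-th increment by its sign $\varepsilon_k\in\{-1,1\}$ and summing yields $\sum_k|\eta_0(t_{k+1})-\eta_0(t_k)|=\Phi(\psi)$ with $\psi:=\sum_{k\le N-2}\varepsilon_k\mathbf 1_{[t_k,t_{k+1})}+\varepsilon_{N-1}\mathbf 1_{[t_{N-1},T]}$ and $\Vert\psi\Vert_\infty\le 1$, hence $\V(\eta_0)\le\Vert\varphi\Vert$ and $\eta_0\in\BV_1$. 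For the representation, given $z\in\C_1$ and a partition $\pi$ I would approximate $z$ by $z_\pi:=\sum_{k\le N-2}z(t_k)\mathbf 1_{[t_k,t_{k+1})}+z(t_{N-1})\mathbf 1_{[t_{N-1},T]}$; uniform continuity gives $\Vert z-z_\pi\Vert_\infty\to 0$ as the mesh of $\pi$ tends to $0$, so $\Phi(z_\pi)\to\varphi(z)$, while the increment identities give $\Phi(z_\pi)=\sum_{k=0}^{N-1}z(t_k)\big(\eta_0(t_{k+1})-\eta_0(t_k)\big)$, a left-tagged Riemann--Stieltjes sum that converges to $\int_0^T z\,d\eta_0$ (consistently with the convention recalled in Appendix~\ref{annexeBV1}). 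Thus $\langle\varphi,z\rangle=\int_0^T z\,d\eta_0$ for every $z\in\C_1$.

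Because $\eta_0$ need not be left-continuous on $(0,T)$, I would normalize it by setting $\eta(0):=0$, $\eta(t):=\eta_0(t^-)$ for $t\in(0,T)$ and $\eta(T):=\eta_0(T)$, so that $\eta\in\NBV_1$ while $\eta$ and $\eta_0$ keep the same values at $0,T$ and the same one-sided limits at every interior point. The elementary lemma I would isolate is that the Riemann--Stieltjes integral $\int_0^T z\,d\mu$ of a continuous $z$ against $\mu\in\BV_1$ depends on $\mu$ only through $\mu(0),\mu(T)$ and the one-sided limits of $\mu$: the union $D$ of the (at most countable) discontinuity sets of $\eta_0$ and $\eta$ being countable, one selects partitions with mesh tending to $0$ whose interior nodes avoid $D$, and on those the left-tagged sums for $\eta_0$ and $\eta$ agree term by term. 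Hence $\int_0^T z\,d\eta=\int_0^T z\,d\eta_0=\langle\varphi,z\rangle$ for all $z\in\C_1$, which is existence. For uniqueness it suffices to show $\eta\in\NBV_1$ with $\int_0^T z\,d\eta=0$ for all $z\in\C_1$ forces $\eta\equiv0$: taking $z\equiv1$ gives $\eta(T)=\eta(T)-\eta(0)=0$; for $a\in(0,T)$, testing against the continuous piecewise-linear $w_n$ equal to $0$ on $[0,a-\tfrac1n]$, rising linearly to $1$ on $[a-\tfrac1n,a]$ and equal to $1$ on $[a,T]$, Riemann--Stieltjes integration by parts, namely $\int_0^T w_n\,d\eta=w_n(T)\eta(T)-w_n(0)\eta(0)-\int_0^T\eta(t)w_n'(t)\,dt$, together with $\eta(T)=0$ and $\tfrac1\varepsilon\int_{a-\varepsilon}^a\eta\to\eta(a^-)=\eta(a)$, shows that the left-hand side, which vanishes for every $n$, converges to $-\eta(a)$; hence $\eta(a)=0$ and $\eta\equiv0$. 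For the positivity statement, assuming $\langle\varphi,z\rangle\ge0$ for all $z\in\C_1^+$, the same integration-by-parts computation with suitable nonnegative piecewise-linear test functions (vanishing outside a small neighbourhood of $[a,b]$ and equal to $1$ on $[a,b-\tfrac1n]$, so that only the left-limits $\eta(a^-)=\eta(a)$ and $\eta(b^-)=\eta(b)$ enter) gives $\eta(b)-\eta(a)\ge0$ for $0\le a<b\le T$, the two boundary cases being dealt with by the obvious limiting or constant choices; equivalently, $\int\cdot\,d\eta\ge0$ on $\C_1^+$ means the Lebesgue--Stieltjes measure $d\eta$ is nonnegative, so $\eta$ is monotonically increasing.

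The main obstacle is the normalization step and its interplay with the two endpoints of $[0,T]$: the integrator $\eta_0$ produced by Hahn--Banach carries no regularity beyond bounded variation, and one must argue with some care that passing to its left-continuous representative, under the conventions $\eta(0)=0$ and $\eta(T)=\eta_0(T)$, changes neither the Riemann--Stieltjes integral against continuous functions nor the variation bound; this is exactly where the asymmetric choice of half-open cells with a closed final cell in the definition of $\eta_0$ pays off. Everything else is careful but routine bookkeeping with Riemann--Stieltjes sums and with the uniform approximation of continuous functions by step functions.
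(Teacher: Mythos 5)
Your proof is correct, and it actually does more than the paper: for Proposition~\ref{propriesz} the paper gives no proof at all, citing Limaye, and only writes out the weaker Proposition~\ref{propriesz2} (existence of an increasing representative under the extra hypothesis $\langle\varphi,z\rangle_{\C^*_1\times\C_1}\ge 0$ on $\C_1^+$, with no uniqueness claim). Your construction shares the skeleton of that weaker proof --- Hahn--Banach extension of $\varphi$ to $\BF_1$, definition of the integrator through indicators of intervals, step-function approximation of $z$, and passage to the left-continuous normalization via partitions whose interior nodes avoid the countable discontinuity set --- but you supply the three ingredients the paper omits: (i) the signed case, where the $\varepsilon_k$-sign trick applied to $\Phi(\psi)$ with $\Vert\psi\Vert_\infty\le 1$ yields $\V(\eta_0)\le\Vert\varphi\Vert_{\C^*_1}$ without any positivity assumption; (ii) uniqueness in $\NBV_1$, obtained by testing against piecewise-linear ramps and integrating by parts, where left-continuity is exactly what makes $\frac1\varepsilon\int_{a-\varepsilon}^a\eta\to\eta(a)$ pin down the interior values while $z\equiv 1$ pins down $\eta(T)$; and (iii) monotonicity derived a posteriori from positivity of $\varphi$ via the same ramps, rather than built into the construction as in the paper's corollary (where $\eta(t)=\langle\tilde\varphi,\mathbf 1_{(0,t]}\rangle$ is increasing by fiat). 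The only places that deserve one more written line are the ones you flagged yourself: that the left-continuous modification still satisfies $\V(\eta)\le\V(\eta_0)$ (take partition nodes slightly to the left and pass to the limit), and that $\int_0^T\eta\,dw_n$ equals the Lebesgue integral $\int_0^T\eta(t)w_n'(t)\,dt$ for piecewise-linear $w_n$, which is immediate since a function of bounded variation is Riemann-integrable. Both are routine, so the argument stands as a complete, self-contained proof of the full statement.
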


We refer to \cite[p.245]{lima} for a complete proof of Proposition~\ref{propriesz}. In this section we will only detail the proof of the following weaker result, which is sufficient for the completeness of this note. 

\begin{proposition}[Riesz corollary]\label{propriesz2}
Let $\varphi \in \C^*_1$ such that $\langle \varphi , z \rangle_{\C^*_1 \times \C_1} \geq 0$ for every $z \in \C^+_1$. There exists $\eta \in \NBV_1$ such that $\eta $ is monotically increasing on $[0,T]$ and
$$ \langle \varphi , z \rangle_{\C^*_1 \times \C_1} = \int_0^T z(\t) \; d\eta (\t), $$
for every $z \in \C_1$. Moreover, from construction of $\eta$, $\varphi = 0$ if and only if $\eta = 0$.
\end{proposition}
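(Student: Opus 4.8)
The plan is to reproduce, with the normalization built in, the classical construction of the Stieltjes function associated to a positive linear functional. First I would record the two elementary consequences of the hypothesis: $\varphi$ is \emph{monotone} (if $z_1 \leq z_2$ pointwise then, applying positivity to $z_2-z_1 \in \C^+_1$, one gets $\langle \varphi , z_1 \rangle_{\C^*_1 \times \C_1} \leq \langle \varphi , z_2 \rangle_{\C^*_1 \times \C_1}$), and $\vert \langle \varphi , z \rangle_{\C^*_1 \times \C_1} \vert \leq \Vert z \Vert_\infty \, c$ for every $z \in \C_1$, where $c := \langle \varphi , \mathbf{1} \rangle_{\C^*_1 \times \C_1} \geq 0$ and $\mathbf{1} \in \C^+_1$ denotes the constant function equal to $1$.

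Next I would construct $\eta$. For $t \in (0,T)$ and $k \in \N^*$, introduce $e_{t,k} \in \C^+_1$ defined by $e_{t,k}(s) := \min \big( 1 , \max ( 0 , k(t-s) ) \big)$; this family is nondecreasing in $t$, nondecreasing in $k$, and converges pointwise on $[0,T]$ to $\mathbf{1}_{[0,t)}$ as $k \to +\infty$. I would then set $\eta(0) := 0$, $\eta(T) := c$, and $\eta(t) := \lim_{k \to +\infty} \langle \varphi , e_{t,k} \rangle_{\C^*_1 \times \C_1} = \sup_k \langle \varphi , e_{t,k} \rangle_{\C^*_1 \times \C_1}$ for $t \in (0,T)$, the limit existing since the sequence is nondecreasing and bounded above by $c$. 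Monotonicity of $\eta$ on $[0,T]$ is immediate from $e_{s,k} \leq e_{t,k} \leq \mathbf{1}$ for $0 < s \leq t < T$ (with $e_{0,k} \equiv 0$ handling the endpoint $0$) together with the monotonicity of $\varphi$; in particular $\eta$ is of bounded variation. Left-continuity of $\eta$ on $(0,T)$ reduces, by monotonicity, to the inequality $\eta(t) \leq \lim_{s \to t^-} \eta(s)$, which I would obtain from the pointwise estimate $e_{t,k} \leq e_{s,2k} + k(t-s)\, \mathbf{1}$, valid whenever $t - \tfrac{1}{2k} < s < t$ (a short case check on the shape of these two piecewise-affine functions); this yields $\langle \varphi , e_{t,k} \rangle_{\C^*_1 \times \C_1} \leq \eta(s) + k(t-s)\,c$, and letting $s \to t^-$ with $k$ fixed, then $k \to +\infty$, gives the claim. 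Hence $\eta \in \NBV_1$ and is monotonically increasing.

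The core of the proof is the representation $\langle \varphi , z \rangle_{\C^*_1 \times \C_1} = \int_0^T z \, d\eta$ for every $z \in \C_1$. Given $\varepsilon > 0$, I would choose by uniform continuity a partition $0 = t_0 < \cdots < t_N = T$ on whose subintervals $z$ oscillates by less than $\varepsilon$, and for $k$ large compared with the mesh form the continuous function
$$ \zeta_k := \sum_{r=0}^{N-2} z(t_r) \big( e_{t_{r+1},k} - e_{t_r,k} \big) + z(t_{N-1}) \big( \mathbf{1} - e_{t_{N-1},k} \big) , $$
with the convention $e_{t_0,k} = e_{0,k} \equiv 0$. A direct inspection shows that on each subinterval, outside a transition zone of total length $O(N/k)$, $\zeta_k$ equals $z(t_r)$ for the appropriate index, and on the transition zones it is a convex combination of nearby values of $z$; hence $\Vert \zeta_k - z \Vert_\infty \leq C\varepsilon$ once $k$ is large. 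Applying $\varphi$ gives, on one hand, $\vert \langle \varphi , \zeta_k \rangle_{\C^*_1 \times \C_1} - \langle \varphi , z \rangle_{\C^*_1 \times \C_1} \vert \leq C\varepsilon\, c$, and on the other hand, letting $k \to +\infty$ with the partition fixed, $\langle \varphi , \zeta_k \rangle_{\C^*_1 \times \C_1}$ tends to the finite sum $\sum_{r=0}^{N-1} z(t_r)\big( \eta(t_{r+1}) - \eta(t_r) \big)$, a Riemann-Stieltjes sum for $\int_0^T z\, d\eta$. Refining the partition so that this sum lies within $\varepsilon$ of $\int_0^T z \, d\eta$ (possible since $z \in \C_1$ and $\eta \in \BV_1$) and letting $\varepsilon \to 0$ yields the equality. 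The final claim is then immediate: $\varphi = 0$ forces $\eta \equiv 0$ by construction, and conversely $\eta = 0$ forces $\langle \varphi , z \rangle_{\C^*_1 \times \C_1} = \int_0^T z \, d\eta = 0$ for all $z \in \C_1$.

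I expect the main obstacle to lie in the last paragraph, on two points: the correct treatment of the right endpoint $t = T$ — where $\eta$ must be allowed to jump, which is exactly why $\eta(T)$ is \emph{defined} to be $c$ rather than $\lim_k \langle \varphi , e_{T,k} \rangle_{\C^*_1 \times \C_1}$, and why the last summand of $\zeta_k$ uses $\mathbf{1}$ instead of $e_{T,k}$ — and the careful ordering of the two limiting procedures (mesh of the partition tending to $0$, and $k$ tending to $+\infty$) in the representation step.
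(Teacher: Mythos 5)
Your proof is correct, but it follows a genuinely different route from the paper's. The paper first extends $\varphi$ by Hahn--Banach to a positive functional $\tilde\varphi$ on the space $\BF_1$ of bounded functions, defines $\eta(t) := \langle \tilde\varphi , \mathbf{1}_{(0,t]} \rangle$, proves the representation by approximating $z \in \C_1$ uniformly by step functions $\sum_k z(t_k) \mathbf{1}_{(t_k,t_{k+1}]}$, and only at the end replaces $\eta$ by its left-continuous modification $\nu$ (checking, via partitions avoiding the countably many discontinuity points, that the Stieltjes integral is unchanged). You avoid the Hahn--Banach extension entirely by working only with continuous test functions: your $\eta(t) = \sup_k \langle \varphi , e_{t,k} \rangle$ is in effect the value of the underlying measure on $[0,t)$, obtained as a monotone limit of continuous approximations of the indicator from below. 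This buys you left-continuity on $(0,T)$ for free (your $e_{t,k} \leq e_{s,2k} + k(t-s)\mathbf{1}$ estimate, which I checked and which in fact holds for all $s<t$), so the paper's final normalization step disappears; the price is that the approximation argument for the representation formula is slightly more delicate, with the function $\zeta_k$ replacing the step function and a double limit ($k \to +\infty$ at fixed partition, then mesh $\to 0$) replacing a single uniform estimate. Your handling of the endpoint $T$ (defining $\eta(T) := \langle \varphi, \mathbf{1}\rangle$ and using $\mathbf{1} - e_{t_{N-1},k}$ in the last summand so that a jump of $\eta$ at $T$ is captured) is exactly the right precaution, and your argument for nontriviality ($\eta = 0$ forces $\varphi = 0$ via the representation, $\varphi = 0$ forces $\eta = 0$ by construction) matches the paper's in substance.
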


\begin{proof}
If $\varphi = 0$, it is sufficient to consider $\eta = 0 \in \NBV_1$ that is monotically increasing. Now let us consider that $\varphi \neq 0$. In the sequel we simply denote by $1 \in \C^+_1 \subset \BF^+_1$ the constant function equal to $1$ on $[0,T]$. Finally we denote by $\langle \cdot , \cdot \rangle := \langle \cdot , \cdot \rangle_{\C^*_1 \times \C_1}$. When no confusion is possible, $\langle \cdot , \cdot \rangle$ also denotes $\langle \cdot , \cdot \rangle_{\BF^*_1 \times \BF_1}$.  \\

Since $\langle \varphi , z \rangle  \geq 0$ for every $z \in \C^+_1$, it holds that $1-\xi \in \C_1^+$ and then $\langle \varphi , \xi \rangle \leq \langle \varphi , 1 \rangle$ for every $\xi \in \C_1$ such that $\Vert \xi \Vert_\infty \leq 1$. We deduce that $\Vert \varphi \Vert_{\C^*_1} \leq \langle \varphi , 1 \rangle \leq \Vert \varphi \Vert_{\C^*_1} \Vert 1 \Vert_\infty = \Vert \varphi \Vert_{\C^*_1}$ and then $\Vert \varphi \Vert_{\C^*_1} = \langle \varphi , 1 \rangle$. From the classical Hahn-Banach theorem (see \cite[Corollary 1.2 p.3]{brez}), there exists a linear continuous application $\tilde{\varphi} : \BF_1 \to \R$ such that $\tilde{\varphi}$ extends $\varphi$ to $\BF_1$ and $\Vert \tilde{\varphi} \Vert_{\BF_1^*} = \Vert \varphi \Vert_{\C^*_1}$. In particular it holds that $\Vert \tilde{\varphi} \Vert_{\BF_1^*} = \langle \tilde{\varphi} , 1 \rangle$. Let us prove that $\langle \tilde{\varphi} , z \rangle  \geq 0$ for every $z \in \BF^+_1$. Let $z \in \BF^+_1$ such that $z \neq 0$ and let $\xi := \frac{2}{\Vert z \Vert_\infty} z - 1$. Then $\xi \in \BF_1$ with $\Vert \xi \Vert_\infty \leq 1$. As a consequence $- \langle \tilde{\varphi} , \xi \rangle \leq \vert \langle \tilde{\varphi} , \xi \rangle \vert \leq \Vert \tilde{\varphi} \Vert_{\BF_1^*} \Vert \xi \Vert_\infty \leq \langle \tilde{\varphi} , 1 \rangle$. Finally, it holds that $\langle \tilde{\varphi} , z \rangle = \frac{ \Vert z \Vert_\infty }{2} (\langle \tilde{ \varphi} , \xi \rangle + \langle \varphi , 1 \rangle ) \geq 0$. \\

Now we introduce $ \eta (t) := \langle \tilde{\varphi} , \mathbf{1}_{(0,t]} \rangle $ for every $t \in [0,T]$. In particular $\eta (0 ) = 0$. Moreover, since $\mathbf{1}_{(0,t]} - \mathbf{1}_{(0,s]} \in \BF^+_1$ for every $0 \leq s \leq t \leq T$ and since $\langle \tilde{\varphi} , z \rangle  \geq 0$ for every $z \in \BF^+_1$, it follows that $\eta$ is monotically increasing on $[0,T]$. In particular $\eta \in \BV_1$ with $\V (\eta) = \eta (T) - \eta (0)$. \\

Now let us prove that $ \langle \varphi , z \rangle = \int_0^T z(\t) d\eta (\t) $ for every $z \in \C_1$. Let $z \in \C_1$ and let $\varepsilon > 0$. Since $z$ is uniformly continuous on $[0,T]$, there exists $\delta > 0$ such that
$$ \Vert \varphi \Vert_{\C^*_1} \vert z (t) - z(s) \vert \leq \frac{\varepsilon}{2} $$
for every $(t,s) \in [0,T]^2$ such that $\vert t - s \vert \leq \delta$. Let $(t_k)_k$ be a partition of $[0,T]$ such that $t_{k+1} - t_k \leq \delta$ and such that
$$ \left\vert \int_0^T z(\t) \; d\eta (\t) - \sum_k z(t_k) (\eta (t_{k+1})- \eta (t_k) ) \right\vert \leq \frac{\varepsilon}{2}. $$
Then, we introduce $ u = \sum_k z(t_k) \mathbf{1}_{(t_k,t_{k+1}]} \in \BF_1$. It clearly holds that
$$ \Vert \varphi \Vert_{\C^*_1} \Vert z - u \Vert_\infty  \leq \frac{\varepsilon}{2} \quad \text{and} \quad \langle \tilde{\varphi} , u \rangle = \sum_k z(t_k) (\eta (t_{k+1})- \eta (t_k) ). $$
As a consequence it holds that
$$ \left\vert \int_0^T z(\t) \; d\eta (\t) - \langle \varphi , z \rangle \right\vert \leq  \left\vert \int_0^T z(\t) \; d\eta (\t) - \langle  \tilde{\varphi}  , u \rangle \right\vert + \vert \langle \tilde{\varphi} , u \rangle - \langle \varphi , z \rangle \vert \leq {\varepsilon} . $$
In particular, it holds that $\eta (T) = \eta (T) - \eta (0) = \int_0^T 1 d\eta (\t) = \langle \varphi , 1 \rangle = \Vert \varphi \Vert_{\C_1^*} \neq 0$. Thus $\eta \neq 0$. \\

To conclude the proof, let us prove that $\eta$ can be chosen left-continuous on $(0,T)$. First of all, since $\eta (0) = 0$ and $\eta $ is monotically increasing on $[0,T]$, $\eta (t^-)$ exists for all $0 < t \leq T$ and it holds that
\begin{equation}\label{eq980a0}
0 \leq \eta (s) \leq \eta (t^-) \leq \eta (t) \leq \eta (T),
\end{equation}
for every $0 \leq s < t \leq T$. Let us define
$$ \nu (t) := \left\lbrace \begin{array}{lcl}
0 & \text{if} & t=0 , \\
\eta (t^-) & \text{if} & 0 < t < T , \\
\eta (T) & \text{if} & t=T.
\end{array} \right. $$
In particular $\nu \neq 0$. Using \eqref{eq980a0}, one can easily prove that $\nu$ is monotically increasing on $[0,T]$. Let us assume by contradiction that $\nu $ is not left-continuous on $(0,T)$, that is, there exists $t \in (0,T)$ and $\varepsilon > 0$ such that $\nu (t^-) \leq \nu (t) - \varepsilon$. Finally, from \eqref{eq980a0}, we obtain that 
$$ \eta \left( t - \frac{2}{k} \right) = \eta \left(t - \frac{1}{k} - \frac{1}{k} \right) \leq \eta \left( \left(t- \frac{1}{k} \right)^- \right) = \nu \left( t - \frac{1}{k} \right) \leq \nu (t^- ) \leq \nu (t) - \varepsilon = \eta (t^- ) - \varepsilon, $$
for every $k \in \N^*$. This raises a contradiction when $k$ tends to $+\infty$. Hence $\nu \in \NBV_1$. Now let us prove that
$$ \int_0^T z(\t) \; d\eta (\t) = \int_0^T z(\t) \; d\nu (\t) $$
for every $z \in \C_1$. Since $\eta$ and $\nu$ can be different only at discontinuity points of $\eta$ (which are at most countable), we consider a sequence of partitions $((t^\ell_k)_k)_\ell$ of $[0,T]$ such that the length of the partitions tends to zero when $\ell$ tends to $+\infty$ and such that no point $t^\ell_k$ is a discontinuity point of $\eta$. As a consequence, it holds that
$$ \int_0^T z(\t) \; d\eta (\t) = \lim\limits_{\ell \to \infty} \sum_k z(t_k) ( \eta (t_{k+1}) - \eta (t_k) ) =  \lim\limits_{\ell \to \infty} \sum_k z(t_k) ( \nu (t_{k+1}) - \nu (t_k) ) = \int_0^T z(\t) \; d\nu  (\t) $$
for every $z \in \C_1$.
\end{proof}

In Appendix~\ref{appderniereCSP}, we will need the two following results.

\begin{lemma}\label{lemACBV}
If $h \in \AC_1$, then $h \in \BV_1$.
\end{lemma}

\begin{proof}
One can easily get that
$$ \sum_k \vert h(t_{k+1}) - h(t_k) \vert = \sum_k \left\vert \int_{t_k}^{t_{k+1}} \dot{h}(\t) \; d\t \right\vert \leq \int_0^T \vert \dot{h}(\t) \vert \; d\t = \Vert \dot{h} \Vert_{\L^1}, $$
for every partition $(t_k)_k$ of $[0,T]$.
\end{proof}

\begin{lemma}\label{lemprimitiveBV}
Let $\eta \in \NBV_1$ be monotically increasing and $z \in \C_1$. The function $h : [0,T] \to \R$ defined by
$$ h(t) := \int_0^t z(\t) \; d\eta(\t), $$
is of bounded variations, \textit{i.e.} $h \in \BV_1$.
\end{lemma}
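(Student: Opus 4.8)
The goal is to show that $h(t) := \int_0^t z(\t)\, d\eta(\t)$ has bounded variation. The plan is to estimate directly the variation sum $\sum_k |h(t_{k+1}) - h(t_k)|$ over an arbitrary partition $(t_k)_k$ of $[0,T]$ and bound it uniformly. The key observation is that, for $s < t$, the difference $h(t) - h(s)$ equals $\int_s^t z(\t)\, d\eta(\t)$, so it can be controlled in absolute value by $\|z\|_\infty$ times the variation of $\eta$ over $[s,t]$; since $\eta$ is monotonically increasing, that local variation is simply $\eta(t^+) - \eta(s^-)$ or, more conveniently, bounded by quantities that telescope.

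First I would record the additivity property $h(t) - h(s) = \int_s^t z(\t)\, d\eta(\t)$ for $0 \le s \le t \le T$, which follows from the definition of the Riemann--Stieltjes integral (splitting partitions at $s$). Next I would bound $\left| \int_s^t z(\t)\, d\eta(\t) \right| \le \|z\|_\infty \, (\eta(t) - \eta(s))$, using that $\eta$ is monotonically increasing so that every Riemann--Stieltjes sum $\sum z(\sigma_k)(\eta(\sigma_{k+1}) - \eta(\sigma_k))$ is bounded in absolute value by $\|z\|_\infty \sum (\eta(\sigma_{k+1}) - \eta(\sigma_k)) = \|z\|_\infty(\eta(t) - \eta(s))$, and passing to the limit. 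Then, for any partition $0 = t_0 < t_1 < \cdots < t_N = T$, summing these estimates gives
$$ \sum_{k} |h(t_{k+1}) - h(t_k)| \le \|z\|_\infty \sum_k (\eta(t_{k+1}) - \eta(t_k)) = \|z\|_\infty (\eta(T) - \eta(0)) = \|z\|_\infty\, \eta(T), $$
which is a finite bound independent of the partition. Taking the supremum over all partitions yields $\V(h) \le \|z\|_\infty\, \eta(T) < +\infty$, hence $h \in \BV_1$.

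I do not expect a serious obstacle here; the only mildly delicate point is justifying the inequality $\left| \int_s^t z\, d\eta \right| \le \|z\|_\infty(\eta(t)-\eta(s))$ cleanly from the definition of the Riemann--Stieltjes integral as a limit of sampled sums, which is routine once monotonicity of $\eta$ is invoked so that all the increments $\eta(\sigma_{k+1})-\eta(\sigma_k)$ are nonnegative.
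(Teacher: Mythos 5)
Your proposal is correct and follows essentially the same route as the paper: split $h(t_{k+1})-h(t_k)$ into $\int_{t_k}^{t_{k+1}} z\,d\eta$, bound each piece using the monotonicity of $\eta$, and sum over the partition. The only cosmetic difference is that the paper bounds each term by $\int_{t_k}^{t_{k+1}}|z|\,d\eta$ (giving $\V(h)\le\int_0^T|z|\,d\eta$) while you use the slightly coarser bound $\Vert z\Vert_\infty(\eta(t_{k+1})-\eta(t_k))$; both yield a finite bound independent of the partition.
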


\begin{proof}
One can easily get that 
$$ \sum_k \vert h(t_{k+1}) - h(t_k) \vert = \sum_k \left\vert \int_{t_k}^{t_{k+1}} z(\t) \; d\eta(\t) \right\vert \leq \sum_k \int_{t_k}^{t_{k+1}} \vert z(\t) \vert \; d\eta(\t) \leq \int_{0}^{T} \vert z(\t) \vert \; d\eta(\t), $$
for every partition $(t_k)_k$ of $[0,T]$.
\end{proof}

We conclude this section with some recalls about the Lebesgue-Stieltjes integral. Let $\eta \in \NBV_1$ be monotically increasing. Then $\eta $ induces a finite nonnegative measure on the Borel set of $[0,T]$ denoted by $d \eta$. This measure is constructed from the equalities $d\eta ( [a,b) ) = \eta (b) - \eta (a)$ for every $0 \leq a \leq b \leq T$ and extended from the classical Carath\'eodory extension theorem. For every $z \in \C_1$, the Riemann-Stieltjes integral of $z$ with respect to $\eta$ and the Lebesgue-Stieltjes integral of $z$ with respect to $d\eta$ (that corresponds to the classical Lebesgue integral of $z$ with respect to the measure $d\eta$) coincide. We refer to \cite[p.83]{fara} or \cite[p.288]{whee} for more details. Finally, the following Fubini-type formula holds:
\begin{equation}\label{eqfubini1D}
\int_0^T \int_0^\t z(\t,s) \; ds \; d\eta (\t) = \int_0^T \int_s^T z(\t,s) \; d\eta(\t) \; ds
\end{equation}
for every $z \in \L^\infty([0,T]^2,\R)$ such that $z$ is continuous in its first variable. 

\begin{remark}
Note that the Stieltjes integrals in \eqref{eqfubini1D} are both well-defined in terms of Riemann-Stieltjes integration\footnote{Indeed, one can easily prove from the classical Lebesgue dominated convergence theorem that the function $\t \mapsto \int_0^\t z (\t,s) \; ds$ is continuous on $[0,T]$.} and the classical integrals in \eqref{eqfubini1D} have to be understood in the Lebesgue sense. Actually, one can easily see that the double integrals in \eqref{eqfubini1D} both exist.
\end{remark}

\subsection{Notations and Fubini-type formulas}\label{annexeBVnotations}
For any $\eta = (\eta_i)_{i=1,\ldots,j} \in \NBV_j$ such that $\eta_i$ is monotically increasing and for any $z = (z_i)_{i=1,\ldots,j} \in \C_j$, we denote by
$$ \int_0^T \langle z(\t) , d\eta (\t) \rangle := \sum_{i=1}^j \int_0^T z_i (\t) \; d\eta_i (\t) \in \R. $$
Let $r \in \N^*$. We denote by
$$ \int_0^T A(\t) \times d\eta (\t) := \left( \sum_{i=1}^j \int_0^T a_{ki}(\t) \; d\eta_i (\t) \right)_{k=1,\ldots,r} \in \R^r, $$
and
$$ \int_0^T \langle z(\t) , A(\t) \times d\eta (\t) \rangle := \int_0^T \langle A(\t)^\top \times z(\t) , d\eta (\t) \rangle \in \R, $$
for every continuous matrices $A(\cdot) =(a_{ki} (\cdot))_{ki} : [0,T] \to \R^{r,j}$ and every $z \in \C_r$. Moreover, one can easily prove that if $z \in \R^r$ (\textit{i.e.} $z \in \C_r$ constant) then
\begin{equation}\label{eq7645}
\int_0^T \langle z , A(\t) \times d\eta (\t) \rangle = \left\langle z , \int_0^T A(\t) \times d\eta (\t) \right\rangle_{\R^r \times \R^r}.
\end{equation}
Finally, one can prove from Equality~\eqref{eqfubini1D} that the following Fubini-type formulas both hold:
\begin{equation}\label{eq326}
\int_0^T \left\langle \int_0^\t \Phi (\t,s) \; ds , d\eta (\t) \right\rangle = \int_0^T \int_s^T \langle \Phi (\t,s) , d\eta (\t) \rangle ds,
\end{equation}
and
\begin{equation}\label{eq326-2}
\int_0^T \left( \int_0^\t A (\t,s) \; ds \right) \times d\eta (\t) = \int_0^T \left( \int_s^T A (\t,s) \times d\eta (\t) \right)  ds,
\end{equation}
where $ \Phi \in \L^\infty([0,T]^2,\R^j) $ and $A \in \L^\infty([0,T]^2,\R^{r,j})$ are continuous in their first variable. 

\section{State-transition matrices and linear Cauchy-Stieltjes problems}\label{appderniere}
In the whole section $A \in \L^\infty([0,T],\R^{n, n})$.

\subsection{Recalls on state-transition matrices}\label{appstatetransition}
For every $s \in [0,T]$, the backward/forward linear Cauchy problem~\eqref{eqLBFCPA} given by
\begin{equation}\label{eqLBFCPA}\tag{BFCP${}_{A,s}$}
 \left\lbrace \begin{array}{l}
\dot{Z}(t) = A(t) \times Z(t), \quad \text{a.e. $t \in [0,T]$,}  \\[5pt]
Z(s)=\mathrm{Id}_n,
\end{array} \right. 
\end{equation}
admits a unique maximal solution that is moreover global.\footnote{This results follows from the classical linear version of the Cauchy-Lipschitz (or Picard-Lindel\"of) theorem.} We denote this solution by $Z(\cdot,s) : [0,T] \to \R^{n,n}$. The matrix function $Z(\cdot,\cdot)$ is the so-called \textit{state-transition matrix} associated to $A$. 

\begin{lemma}\label{lemstatetransi}
The following equalities both hold
\begin{eqnarray*}
Z(t,s) & = & \mathrm{Id}_n + \int_s^t A(\t) \times Z(\t,s) \; d\t, \\
& = & \mathrm{Id}_n + \int_s^t Z(t,\t) \times A(\t) \; d\t,
\end{eqnarray*}
for every $(t,s) \in [0,T]^2$. In particular, $Z(\cdot,\cdot) : [0,T]^2 \to \R^{n,n}$ is continuous.
\end{lemma}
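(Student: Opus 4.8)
The plan is to establish the two integral equations by exhibiting each side as the solution of a linear Cauchy (or, in the second case, a "backward-in-the-final-time") problem and invoking the uniqueness statement already recorded for \eqref{eqLBFCPA}. For the first identity, I would simply integrate the defining ODE $\dot Z(\cdot,s) = A \times Z(\cdot,s)$ from $s$ to $t$: since $Z(\cdot,s)$ is absolutely continuous (being the global solution of a linear Cauchy problem with $\L^\infty$ coefficient) and $Z(s,s) = \mathrm{Id}_n$, the fundamental theorem of calculus for absolutely continuous functions gives $Z(t,s) = \mathrm{Id}_n + \int_s^t A(\t)\times Z(\t,s)\,d\t$ for every $t \in [0,T]$, and this holds for every $s \in [0,T]$. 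This first part is essentially immediate.

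For the second identity I would fix $t \in [0,T]$ and consider the map $s \mapsto Z(t,s)$. The standard route is to show it satisfies the backward variational equation $\partial_s Z(t,s) = -Z(t,s)\times A(s)$ with terminal condition $Z(t,t) = \mathrm{Id}_n$; equivalently, one shows $Z(t,s)\times Z(s,r) = Z(t,r)$ (the semigroup/cocycle property), differentiates in $s$, and uses $\partial_s Z(s,r) = A(s)\times Z(s,r)$ together with the fact that $Z(s,r)$ is invertible with $\frac{d}{ds}Z(s,r)^{-1} = -Z(s,r)^{-1}\times A(s)$. Then integrating $\partial_s Z(t,s) = -Z(t,s)\times A(s)$ from $s$ to $t$ and using $Z(t,t)=\mathrm{Id}_n$ yields $Z(t,s) = \mathrm{Id}_n + \int_s^t Z(t,\t)\times A(\t)\,d\t$. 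The cocycle property itself follows from uniqueness in \eqref{eqLBFCPA}: for fixed $r$, both $s \mapsto Z(t,r)$ (as a function whose "initial value" at $s=r$ along the trajectory is read off correctly) and $s\mapsto Z(t,s)\times Z(s,r)$ solve the same linear problem, hence coincide. Alternatively, and perhaps cleaner to write, one checks directly that $W(s) := Z(t,s)$ satisfies $W(s) = \mathrm{Id}_n + \int_s^t W(\t)\times A(\t)\,d\t$ by verifying that $\widetilde W(s) := \mathrm{Id}_n + \int_s^t W(\t)\times A(\t)\,d\t$ has the same endpoint value and derivative in $s$ as $W$ and applying Gronwall.

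The main obstacle is the second identity: one must be a little careful about differentiating $Z(t,s)$ in the second variable, since the ODE \eqref{eqLBFCPA} is posed with $s$ as a parameter, not a variable, and $A$ is only $\L^\infty$, so "$\partial_s Z(t,s) = -Z(t,s)\times A(s)$" holds only for a.e. $s$. The rigorous way around this is to avoid pointwise differentiation in $s$ altogether and argue purely with the integral equations and the cocycle identity $Z(t,s)\times Z(s,r) = Z(t,r)$, which can be proved by the uniqueness argument above without any regularity in $s$ beyond continuity. Finally, continuity of $Z(\cdot,\cdot)$ on $[0,T]^2$ follows from either integral representation: continuity in $t$ (with $s$ fixed) is clear from the first equation and boundedness of $A$ and local boundedness of $Z(\cdot,s)$ (which is in fact uniform via a Gronwall estimate $\Vert Z(t,s)\Vert \le e^{\Vert A\Vert_{\L^\infty} T}$), continuity in $s$ (with $t$ fixed) is clear from the second, and joint continuity then follows by combining the two together with the uniform bound, e.g. $\Vert Z(t,s) - Z(t',s')\Vert \le \Vert Z(t,s)-Z(t',s)\Vert + \Vert Z(t',s)\times(\mathrm{Id}_n - Z(s,s'))\Vert$ and estimating each piece.
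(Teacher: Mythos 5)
Your proposal is correct in substance but follows a genuinely different route from the paper for the second identity. The paper never introduces the cocycle property or invertibility of $Z$: it sets $T(t,s) := \mathrm{Id}_n + \int_s^t Z(t,\t)\times A(\t)\,d\t$, substitutes the first identity for $Z(t,\t)$ inside the integrand, swaps the order of integration by Fubini, and recognizes that $T(\cdot,s)$ then satisfies the integral equation $T(t,s) = \mathrm{Id}_n + \int_s^t A(\t)\times T(\t,s)\,d\t$ characterizing $Z(\cdot,s)$; uniqueness for \eqref{eqLBFCPA} concludes. This is shorter and sidesteps every regularity question in the second variable. Your route via $Z(t,s)Z(s,r)=Z(t,r)$ and $\frac{d}{ds}Z(s,r)^{-1}=-Z(s,r)^{-1}\times A(s)$ is also valid and more conceptual, but note two points. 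First, your worry about ``$A$ only $\L^\infty$'' is a red herring: the differentiation you actually need is of $s\mapsto Z(s,r)^{-1}$, i.e.\ in the \emph{first} variable of $Z$, where the trajectory is absolutely continuous, so the a.e.\ product rule and the fundamental theorem of calculus apply once you check that the inverse is itself absolutely continuous (uniform bound on $\Vert Z(r,s)\Vert$ plus local Lipschitzness of matrix inversion). Second, your claimed fallback of ``avoiding pointwise differentiation in $s$ altogether and arguing purely with the integral equations and the cocycle identity'' is not actually spelled out and does not obviously work (substituting the cocycle identity into the target integral leads back to the same identity), and your ``cleaner'' alternative of comparing derivatives of $W$ and $\widetilde W$ presupposes the very formula $\partial_s Z(t,s)=-Z(t,s)\times A(s)$ you are trying to prove; so you should commit to the inverse-differentiation argument (or adopt the paper's Fubini argument). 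The continuity argument you give --- uniform Gronwall bound plus Lipschitz estimates in each variable from the two integral identities, combined by the triangle inequality --- is fine and matches the paper's intent.
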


\begin{proof}
The first equality is obvious since it corresponds to the definition of a global solution of~\eqref{eqLBFCPA}. From this equality and from the classical Gronwall lemma, one can easily prove that $Z(\cdot,\cdot)$ is bounded on $[0,T]^2$. For every $(t,s) \in [0,T]^2$, we introduce 
$$ T(t,s) := \mathrm{Id}_n + \int_s^t Z(t,\t) \times A(\t) \; d\t , $$ 
that is well-defined since $Z(\cdot,\cdot)$ is bounded on $[0,T]^2$. Our aim is to prove that $Z(t,s) = T(t,s)$. From the first equality, it holds that
$$ \int_s^t Z(t,\t) \times A(\t) \; d\t = \int_s^t A(\t) \; d\t + \int_s^t \int_\t^t A(\xi) \times Z(\xi,\t) \times A(\t) \; d\xi \; d\t, $$
for every $(t,s) \in [0,T]^2$. Using the classical Fubini formula (and inversing the roles of $\t$ and $\xi$), we obtain that
$$ \int_s^t Z(t,\t) \times A(\t) \; d\t = \int_s^t A(\t) \left[ \mathrm{Id}_n + \int_s^\t Z(\t,\xi) \times A(\xi) \; d\xi \right] d\t . $$
Finally, adding $\mathrm{Id}_n$ in the above equality, we obtain that $T(\cdot,\cdot)$ satisfies
$$ T(t,s) = \mathrm{Id}_n + \int_s^t A(\t) \times T(\t,s) \; d\t , $$
for every $(t,s) \in [0,T]^2$. From uniqueness of the global solution of~\eqref{eqLBFCPA}, we obtain that $T(t,s) = Z(t,s)$. To conclude, from the definition of $Z(\cdot,\cdot)$, it is clear that $Z(\cdot,\cdot)$ is (absolutely) continuous in its first variable. Using the second equality and the classical Lebesgue dominated convergence theorem, one can easily prove that $Z(\cdot,\cdot)$ is continuous on $[0,T]^2$.
\end{proof}

\begin{remark}
From Lemma~\ref{lemstatetransi}, note that $Z(t,\cdot)$ is the unique global solution of the backward/forward linear Cauchy problem given by
$$ \left\lbrace \begin{array}{l}
\dot{Z}(s) = - Z(s) \times A(s), \quad \text{a.e. $s \in [0,T]$,}  \\[5pt]
Z(t)=\mathrm{Id}_n,
\end{array} \right. $$
for every $t \in [0,T]$.
\end{remark}

\subsection{Recalls on linear Cauchy problems}\label{appcauchyproblemclassique}
Let $B \in \L^\infty([0,T],\R^{n})$ and $q_0$, $p_T \in \R^n$. From the classical linear version of the Cauchy-Lipschitz (or Picard-Lindel\"of) theorem, the forward linear Cauchy problem~\eqref{eqLFCPAB} given by
\begin{equation}\label{eqLFCPAB}\tag{FCP${}_{A,B}$}
\left\lbrace \begin{array}{l}
\dot q(t) = A(t) \times q(t) + B(t), \quad \text{a.e. $t \in [0,T]$,}  \\[5pt]
q(0)=q_0,
\end{array} \right. 
\end{equation}
admits a unique maximal solution that is moreover global. Similarly, the backward linear Cauchy problem~\eqref{eqLBCPAB} given by
\begin{equation}\label{eqLBCPAB}\tag{BCP${}_{A,B}$}
\left\lbrace \begin{array}{l}
- \dot p(t) = A(t)^\top \times p(t) + B(t), \quad \text{a.e. $t \in [0,T]$,}  \\[5pt]
p(T)=p_T,
\end{array} \right. 
\end{equation}
also admits a unique maximal solution that is moreover global. 

\begin{proposition}[Duhamel formulas]\label{propduhamelclassique}
The global solutions of \eqref{eqLFCPAB} and \eqref{eqLBCPAB} are given by
$$ q(t) =  Z(t,0) \times q_0 + \int_0^t Z(t,s) \times B(s) \; ds , $$
and
$$ p(t) = Z(T,t)^\top \times p_T + \int_t^T Z(\t,t)^\top \times B(\t) \; d\t , $$
for every $t \in [0,T]$, where $Z(\cdot,\cdot)$ is the state-transition matrix associated to $A$.
\end{proposition}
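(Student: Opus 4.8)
The plan is to prove both identities by \emph{verification}. Since \eqref{eqLFCPAB} and \eqref{eqLBCPAB} each admit a unique global solution (recalled just above), it suffices to check that the right-hand sides of the two formulas satisfy the corresponding integral equations
$$ q(t) = q_0 + \int_0^t \big( A(\t)\times q(\t) + B(\t) \big) \; d\t \quad \text{and} \quad p(t) = p_T + \int_t^T \big( A(\t)^\top \times p(\t) + B(\t) \big) \; d\t, $$
together with the boundary values. This route avoids having to differentiate merely measurable functions. Throughout I would use that $Z(\cdot,\cdot)$ is bounded and continuous on $[0,T]^2$ (Lemma~\ref{lemstatetransi}), so that every integral below is well-defined, and that $B \in \L^\infty([0,T],\R^n)$.

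\textbf{Forward formula.} Set $q(t) := Z(t,0)\times q_0 + \int_0^t Z(t,s)\times B(s)\,ds$. The boundary condition is immediate since $Z(0,0) = \mathrm{Id}_n$ gives $q(0) = q_0$. Then I would insert the integral representation $Z(t,s) = \mathrm{Id}_n + \int_s^t A(\t)\times Z(\t,s)\,d\t$ of Lemma~\ref{lemstatetransi} into both terms, obtaining
$$ q(t) = q_0 + \int_0^t A(\t)\times Z(\t,0)\times q_0 \,d\t + \int_0^t B(s)\,ds + \int_0^t \int_s^t A(\t)\times Z(\t,s)\times B(s) \,d\t\,ds. $$
Applying the classical Fubini theorem to the last term (integration domain $\{0 \le s \le \t \le t\}$) rewrites it as $\int_0^t A(\t)\times\big(\int_0^\t Z(\t,s)\times B(s)\,ds\big)\,d\t$. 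Recognizing $Z(\t,0)\times q_0 + \int_0^\t Z(\t,s)\times B(s)\,ds = q(\t)$ then yields $q(t) = q_0 + \int_0^t \big( A(\t)\times q(\t) + B(\t) \big)\,d\t$, which is exactly the integral form of \eqref{eqLFCPAB}; uniqueness of the global solution finishes this case.

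\textbf{Backward formula.} The argument is the mirror image. Set $p(t) := Z(T,t)^\top \times p_T + \int_t^T Z(\t,t)^\top \times B(\t)\,d\t$, so that $p(T) = Z(T,T)^\top \times p_T = p_T$. Here I would use the second identity of Lemma~\ref{lemstatetransi} read in its \emph{lower} argument, namely $Z(\t,t) = \mathrm{Id}_n + \int_t^\t Z(\t,\xi)\times A(\xi)\,d\xi$ (equivalently, the Remark following that lemma), hence $Z(\t,t)^\top = \mathrm{Id}_n + \int_t^\t A(\xi)^\top \times Z(\t,\xi)^\top\,d\xi$. Substituting into both terms of $p(t)$ and applying Fubini to the resulting double integral over $\{t \le \xi \le \t \le T\}$ gives, after recognizing $p(\xi)$ inside, the identity $p(t) = p_T + \int_t^T \big( A(\xi)^\top \times p(\xi) + B(\xi) \big)\,d\xi$, i.e.\ the integral form of \eqref{eqLBCPAB}; uniqueness again concludes. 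Alternatively, one could derive the backward formula from the forward one via the time-reversal change of variable $t \mapsto T-t$, at the cost of tracking how the state-transition matrix transforms.

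\textbf{Main obstacle.} There is no deep difficulty: the two verifications are essentially one computation performed twice. The only point demanding care is the Fubini interchange — confirming that the integrand genuinely lies in $\L^1$ of the two-dimensional domain (guaranteed by boundedness of $Z$, $A$ and $B$) and correctly identifying the triangular domain of integration, hence the new limits. The backward case is the more error-prone of the two, since the orientation of all the inequalities is reversed.
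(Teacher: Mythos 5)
Your proposal is correct and follows exactly the paper's own argument: substitute the integral identities for $Z(\cdot,\cdot)$ from Lemma~\ref{lemstatetransi} (the first one for the forward problem, the second one for the backward problem), interchange the order of integration by Fubini over the triangular domain, recognize the candidate solution inside the integral, and conclude by uniqueness of the global solution. The only difference is that you write out the computation the paper leaves as "one can easily prove".
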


\begin{proof}
Let $q : [0,T] \to \R^n$ be defined by
$$ q(t) :=  Z(t,0) \times q_0 + \int_0^t Z(t,s) \times B(s) \; ds , $$
for every $t \in [0,T]$. Replacing the value of $Z(\cdot,\cdot)$ by the first equality given in Lemma~\ref{lemstatetransi} and using the classical Fubini formula, one can easily prove that
$$ q(t) = q_0 + \int_0^t A(\t) \times q(\t) + B(\t) \; d\t , $$
for every $t \in [0,T]$. As a consequence, $q$ is the unique global solution of~\eqref{eqLFCPAB}. Similarly, let $p : [0,T] \to \R^n$ be defined by
$$ p(t) := Z(T,t)^\top \times p_T + \int_t^T Z(\t,t)^\top \times B(\t) \; d\t , $$
for every $t \in [0,T]$. Replacing the value of $Z(\cdot,\cdot)$ by the second equality given in Lemma~\ref{lemstatetransi} and using the classical Fubini formula, one can easily prove that
$$ p(t) = p_T + \int_t^T A(\t)^\top \times p(\t) + B(\t) \; d\t , $$
for every $t \in [0,T]$. As a consequence, $p$ is the unique global solution of~\eqref{eqLBCPAB}.
\end{proof}

\subsection{Linear Cauchy-Stieltjes problems}\label{appderniereCSP}
Let $q_0$, $p_T \in \R^n$. Let $B := (B_i)_{i=1,\ldots,j}$ and $\eta := (\eta_i)_{i=1,\ldots,j}$ where $B_i \in \C_n$ and $\eta_i \in \NBV_1$ is monotically increasing for every $i=1,\ldots,j$. We say that $q$ is a \textit{global solution} of the forward linear Cauchy-Stieltjes problem~\eqref{eqLFCSPAB} given by
\begin{equation}\label{eqLFCSPAB}\tag{FCSP${}_{A,B}$}
\left\lbrace \begin{array}{l}
dq = A \times q \; dt + \sum_{i=1}^j B_i \; d\eta_i, \quad \text{on $[0,T]$,}  \\[5pt]
q(0)=q_0,
\end{array} \right. 
\end{equation}
if $q \in \BF_n$ and $q$ satisfies
$$ q(t) = q_0 + \int_0^t A(\t) \times q(\t) \; d\t + \sum_{i=1}^j \int_0^t B_i (\t) \; d\eta_i (\t),  $$
for every $t \in [0,T]$. In such a case, it follows from Lemmas~\ref{lemACBV} and \ref{lemprimitiveBV} that $q \in \BV_n$. \\

Similarly, we say that $p$ is a \textit{global solution} of the backward linear Cauchy-Stieltjes problem~\eqref{eqLBCSPAB} given by
\begin{equation}\label{eqLBCSPAB}\tag{BCSP${}_{A,B}$}
\left\lbrace \begin{array}{l}
-dp = A^\top \times p \; dt + \sum_{i=1}^j B_i \; d\eta_i, \quad \text{on $[0,T]$,}  \\[5pt]
p(T)=p_T,
\end{array} \right. 
\end{equation}
if $p \in \BF_n$ and $p$ satisfies
$$ p(t) = p_T + \int_t^T A(\t)^\top \times p(\t) \; d\t + \sum_{i=1}^j \int_t^T B_i (\t) \; d\eta_i (\t),  $$
for every $t \in [0,T]$. In such a case, it follows from Lemmas~\ref{lemACBV} and \ref{lemprimitiveBV} that $p \in \BV_n$.

\begin{proposition}\label{propexistCSP}
Problem~\eqref{eqLFCSPAB} admits a unique global solution. Problem~\eqref{eqLBCSPAB} admits a unique global solution.
\end{proposition}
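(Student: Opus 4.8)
The plan is to recognise the forward problem~\eqref{eqLFCSPAB} as a linear Volterra integral equation of the second kind and to solve it by a contraction-mapping argument on $(\BF_n,\Vert\cdot\Vert_\infty)$; the backward problem~\eqref{eqLBCSPAB} is then handled in exactly the same way. First I would observe that the ``frozen'' inhomogeneity
\[
c(t) := q_0 + \sum_{i=1}^j \int_0^t B_i(\t)\; d\eta_i(\t), \qquad t\in[0,T],
\]
is well-defined and lies in $\BV_n\subset\BF_n$: each $B_i\in\C_n$ and each $\eta_i\in\NBV_1$ is monotonically increasing, so every component integral is a genuine Riemann--Stieltjes integral and belongs to $\BV_1$ by Lemma~\ref{lemprimitiveBV}. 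With this notation a function $q\in\BF_n$ is a global solution of~\eqref{eqLFCSPAB} if and only if $q=\Phi(q)$, where
\[
\Phi(q)(t) := c(t) + \int_0^t A(\t)\times q(\t)\; d\t, \qquad t\in[0,T].
\]
Since $t\mapsto \int_0^t A(\t)\times q(\t)\,d\t$ is absolutely continuous and bounded, $\Phi$ maps $\BF_n$ into $\BF_n$ (indeed into $\BV_n$), and $(\BF_n,\Vert\cdot\Vert_\infty)$ is a complete metric space.

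Next I would establish, by induction on $k$, the estimate
\[
\Vert \Phi^k(q_1)(t) - \Phi^k(q_2)(t) \Vert_{\R^n} \leq \frac{(\Vert A\Vert_{\L^\infty}\,t)^k}{k!}\,\Vert q_1-q_2\Vert_\infty, \qquad t\in[0,T],
\]
for all $q_1,q_2\in\BF_n$, where $\Vert A\Vert_{\L^\infty}$ denotes the essential supremum of an operator norm of $A(\cdot)$: the case $k=1$ is the elementary Lipschitz bound, and the inductive step follows by inserting the $k$-th bound into the integral defining $\Phi^{k+1}$. Since $(\Vert A\Vert_{\L^\infty}T)^k/k!\to 0$, some iterate $\Phi^k$ is a strict contraction on the complete metric space $(\BF_n,\Vert\cdot\Vert_\infty)$, so by the Banach fixed-point theorem applied to $\Phi^k$ the map $\Phi$ has a unique fixed point; this is the unique global solution of~\eqref{eqLFCSPAB}, and it lies in $\BV_n$ as already noted. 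I would then treat~\eqref{eqLBCSPAB} verbatim, replacing $\int_0^t$ by $\int_t^T$, $A$ by $A^\top$ and $q_0$ by $p_T$; the same factorial estimate again yields a contracting iterate.

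As an alternative, uniqueness can be isolated --- the difference $r$ of two global solutions satisfies $\Vert r(t)\Vert_{\R^n}\leq \Vert A\Vert_{\L^\infty}\int_0^t\Vert r(\t)\Vert_{\R^n}\,d\t$, hence $r\equiv 0$ by the Gronwall lemma --- while existence can be obtained from the explicit candidate
\[
q(t) := Z(t,0)\times q_0 + \sum_{i=1}^j \int_0^t Z(t,\t)\times B_i(\t)\; d\eta_i(\t),
\]
which one verifies to solve the integral equation by substituting the first identity of Lemma~\ref{lemstatetransi} and applying the Fubini-type formula~\eqref{eq326-2}, exactly as in the proof of Proposition~\ref{propduhamelclassique}. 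I do not expect a real obstacle here: the only points needing care are the well-definedness and boundedness of $c$ (which is precisely Lemma~\ref{lemprimitiveBV}) and the routine inductive factorial bound, and the Stieltjes term is harmless because it is absorbed into the fixed bounded function $c$, leaving a fixed-point operator identical to the one for a classical linear ODE.
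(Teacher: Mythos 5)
Your proposal is correct and follows essentially the same route as the paper: both define the fixed-point operator $q \mapsto q_0 + \int_0^t A \times q \, d\t + \sum_i \int_0^t B_i \, d\eta_i$ (well-defined by Lemmas~\ref{lemACBV} and \ref{lemprimitiveBV}), establish the factorial estimate $\frac{(\Vert A \Vert_{\L^\infty} T)^k}{k!}$ by induction, and conclude via a contractive iterate on the Banach space $(\BF_n, \Vert \cdot \Vert_\infty)$. The alternative you sketch (Gronwall for uniqueness, the Duhamel-type formula for existence) is also sound but is not needed; the paper reserves that computation for Proposition~\ref{propduhamelstieltjes}.
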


\begin{proof}
In this proof, we only treat Problem~\eqref{eqLFCSPAB}. Let us consider the functional given by
$$  \fonction{\mathcal{G}}{\BF_n}{\BV_n \subset \BF_n}{q}{

\fonction{\mathcal{G}(q)}{[0,T]}{\R^n}{t}{q_0 + \int_0^t A(\t) \times q(\t) \; d\t + \sum_{i=1}^j \int_0^t B_i (\t) \; d\eta_i (\t).}

} $$
Note that $\mathcal{G}$ is well-defined from Lemmas~\ref{lemACBV} and \ref{lemprimitiveBV}. Our aim is to prove that $\mathcal{G}$ admits a unique fixed point. To do so, we will prove that $\mathcal{G}$ admits a contractive iterate. One can easily prove by induction on $k \in \N^*$ that
$$ \Vert \mathcal{G}^k(q_2)(t) -  \mathcal{G}^k(q_1) (t) \Vert_{\R^n} \leq \frac{\Vert A \Vert^k_{\L^\infty} }{(k-1)!} \int_0^t (t - \t)^{k-1} \Vert q_2(\t) - q_1(\t) \Vert_{\R^n} \; d\t ,$$
for every $q_1$, $q_2 \in \BF_n$, every $t \in [0,T]$ and every $k \in \N^*$. As a consequence, it holds that
$$ \Vert \mathcal{G}^k(q_2) -  \mathcal{G}^k(q_1) \Vert_{\infty} \leq \frac{( \Vert A \Vert_{\L^\infty} T)^k}{k!} \Vert q_2 - q_1 \Vert_{\infty} ,$$
for every $q_1$, $q_2 \in \BF_n$ and every $k \in \N^*$. Taking $k \in \N^*$ sufficiently large to get $\frac{( \Vert A \Vert_{\L^\infty} T)^k}{k!} < 1$, we obtain that $\mathcal{G}^k$ is a contractive iterate of $\mathcal{G}$. Since $(\BF_n , \Vert \cdot \Vert_\infty)$ is a Banach space, we conclude that $\mathcal{G}$ admits a unique fixed point.
\end{proof}

\begin{proposition}[Duhamel-type formulas]\label{propduhamelstieltjes}
The global solutions of \eqref{eqLFCSPAB} and \eqref{eqLBCSPAB} are given by
$$ q(t) =  Z(t,0) \times q_0 + \sum_{i=1}^j \int_0^t Z(t,s) \times B_i (s) \; d\eta_i (s) , $$
and
$$ p(t) = Z(T,t)^\top \times p_T + \sum_{i=1}^j \int_t^T Z(\t,t)^\top \times B_i(\t) \; d\eta_i(\t) , $$
for every $t \in [0,T]$, where $Z(\cdot,\cdot)$ is the state-transition matrix associated to $A$.
\end{proposition}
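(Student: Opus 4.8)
The plan is to mirror the proof of Proposition~\ref{propduhamelclassique}, with Lebesgue--Stieltjes integrals in place of ordinary ones and the Fubini-type formula~\eqref{eq326-2} in place of the classical Fubini theorem. I will only detail the backward problem~\eqref{eqLBCSPAB}; the forward problem~\eqref{eqLFCSPAB} is handled symmetrically, using the first equality of Lemma~\ref{lemstatetransi} instead of the second and integrating forward in time. So let $p : [0,T] \to \R^n$ be defined by
$$ p(t) := Z(T,t)^\top \times p_T + \sum_{i=1}^j \int_t^T Z(\t,t)^\top \times B_i(\t) \; d\eta_i(\t). $$
Since $Z(\cdot,\cdot)$ is continuous, hence bounded, on $[0,T]^2$ (Lemma~\ref{lemstatetransi}), since each $B_i \in \C_n$, and since each $\eta_i \in \NBV_1$ is monotonically increasing, the Riemann--Stieltjes integrals above make sense, $p \in \BF_n$, and even $p \in \BV_n$ by Lemmas~\ref{lemACBV} and~\ref{lemprimitiveBV}. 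By the uniqueness part of Proposition~\ref{propexistCSP}, it therefore suffices to check that this particular $p$ satisfies the integral equation
$$ p(t) = p_T + \int_t^T A(\t)^\top \times p(\t) \; d\t + \sum_{i=1}^j \int_t^T B_i(\t) \; d\eta_i(\t), \qquad t \in [0,T], $$
which defines a global solution of~\eqref{eqLBCSPAB}.

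To do so I would substitute into the expression for $p(t)$ the integral representation $Z(\t,t)^\top = \mathrm{Id}_n + \int_t^\t A(\xi)^\top \times Z(\t,\xi)^\top \; d\xi$ given by the second equality of Lemma~\ref{lemstatetransi} (both for $Z(T,t)^\top$ and, under the integral sign, for $Z(\t,t)^\top$). This writes $p(t)$ as a sum of four terms: the constant $p_T$; the vector $\big(\int_t^T A(\xi)^\top \times Z(T,\xi)^\top \, d\xi\big)\times p_T$; the Stieltjes term $\sum_i \int_t^T B_i(\t)\,d\eta_i(\t)$; and the double integral $\sum_i \int_t^T \big(\int_t^\t A(\xi)^\top \times Z(\t,\xi)^\top \times B_i(\t) \, d\xi\big)\,d\eta_i(\t)$. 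In this last term the region of integration is the triangle $\{(\xi,\t) : t \le \xi \le \t \le T\}$, with the Stieltjes measure $d\eta_i$ carried by the \emph{upper} variable $\t$, so~\eqref{eq326-2} applies verbatim on $[t,T]$ (the integrand $A(\xi)^\top \times Z(\t,\xi)^\top \times B_i(\t)$ being bounded on $[t,T]^2$ and continuous in its first variable $\t$, by Lemma~\ref{lemstatetransi} and continuity of $B_i$). Exchanging the two integrations and regrouping, the second and fourth terms combine into $\int_t^T A(\xi)^\top \times \big( Z(T,\xi)^\top \times p_T + \sum_i \int_\xi^T Z(\t,\xi)^\top \times B_i(\t)\,d\eta_i(\t) \big)\, d\xi$, whose inner bracket is precisely $p(\xi)$; adding the first and third terms yields exactly the displayed integral equation, and Proposition~\ref{propexistCSP} then identifies $p$ with the unique global solution.

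The one genuine technical point is this Fubini-type exchange. For the backward problem it is, as indicated, a direct instance of~\eqref{eq326-2} on a subinterval, the restriction from $[0,T]$ to $[t,T]$ being innocuous (it amounts to replacing $\eta_i$ by its restriction to $[t,T]$, or the integrands by their products with $\mathbf{1}_{[t,T]}$). For the forward problem, however, after the analogous substitution the double integral to be transformed lives on the triangle $\{(s,\xi) : 0 \le s \le \xi \le t\}$ with the Stieltjes measure $d\eta_i$ carried this time by the \emph{lower} variable $s$; this is the mirror image of~\eqref{eqfubini1D}/\eqref{eq326-2}, which is not literally stated in the appendix. I would obtain it either via the time reversal $r \mapsto t-r$, which turns it into an instance of~\eqref{eq326-2}, or directly by the Riemann--Stieltjes approximation argument already used in Appendix~\ref{annexeBV1}. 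Granting this companion identity, the forward computation runs word for word: with $q(t) := Z(t,0)\times q_0 + \sum_i \int_0^t Z(t,s)\times B_i(s)\,d\eta_i(s)$, substituting $Z(t,0) = \mathrm{Id}_n + \int_0^t A(\xi)\times Z(\xi,0)\,d\xi$ and $Z(t,s) = \mathrm{Id}_n + \int_s^t A(\xi)\times Z(\xi,s)\,d\xi$, applying the companion Fubini identity, and recognizing $q(\xi)$ inside the bracket, one gets that $q$ satisfies $q(t) = q_0 + \int_0^t A(\t)\times q(\t)\,d\t + \sum_i \int_0^t B_i(\t)\,d\eta_i(\t)$, and Proposition~\ref{propexistCSP} concludes.
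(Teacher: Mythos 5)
Your proof is correct and follows exactly the route the paper takes: the paper's entire proof is the one-line remark that, thanks to the Fubini-type formulas of Appendix~\ref{annexeBVnotations}, the argument is the same as for Proposition~\ref{propduhamelclassique}, and that is precisely the computation you carry out in detail (substitute the integral identities of Lemma~\ref{lemstatetransi}, exchange the order of integration, recognize the solution inside the bracket, conclude by uniqueness from Proposition~\ref{propexistCSP}). Your observation that the forward case requires the mirror image of~\eqref{eq326-2} (Stieltjes measure carried by the \emph{lower} variable of the triangle), which is not literally stated in the appendix, is a genuine point the paper glosses over, and either of your proposed fixes (time reversal or redoing the approximation argument) closes it.
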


\begin{proof}
From the Fubini-type formulas provided in Appendix~\ref{annexeBV} for Stieltjes integrals, the proof is exactly the same than in Proposition~\ref{propduhamelclassique}.
\end{proof}

\end{document}